\numberwithin{equation}{section}
\theoremstyle{plain}
\newtheorem{theorem}{Theorem}[section]
\newtheorem{lemma}[theorem]{Lemma}
\newtheorem{proposition}[theorem]{Proposition}
\newtheorem{corollary}[theorem]{Corollary}
\theoremstyle{definition}
\newtheorem{defn*}{Definition}
\newtheorem{definition}{Definition}
\newtheorem{remark}[theorem]{Remark}
\newtheorem{assumption}{Assumption}
\newcommand{\R}{\mathbb{R}}
\newcommand{\F}{\mathcal{F}}
\newcommand{\U}{\mathcal{U}}
\newcommand{\e}{\varepsilon}
\newcommand{\weak}{\rightharpoonup}
\newcommand{\ensemb}{\mathcal{E}_{\Theta}}
\newcommand{\ensembM}{\mathcal{E}_{M}}
\title[Deep Learning of Diffeomorphisms]
{Deep Learning Approximation of Diffeomorphisms via Linear-Control Systems}
\author[A. Scagliotti]{A. Scagliotti}
\address[A.~Scagliotti]{Scuola Internazionale Superiore di Studi Avanzati, Trieste, Italy.}
\email{ascaglio@sissa.it}
\begin{document}

\begin{abstract}
In this paper we propose a Deep Learning architecture
to approximate diffeomorphisms diffeotopic to the
identity. 
We consider a control system
of the form $\dot x = \sum_{i=1}^lF_i(x)u_i$, with linear dependence
in the controls, and we use the corresponding flow
to approximate the action of a diffeomorphism on 
a compact ensemble of points. 
Despite the simplicity of the
control system, it has been recently shown that
a Universal Approximation Property holds.
The problem of minimizing the sum of the training 
error and of a regularizing term  
induces a gradient flow in the space of admissible
controls.
A possible training procedure
for the discrete-time neural network
consists in projecting
the gradient flow onto a finite-dimensional
subspace of the admissible controls.
An alternative approach
relies on an iterative method
based on Pontryagin Maximum Principle
for the numerical resolution of Optimal
Control problems. Here the maximization of the
Hamiltonian can be carried out with
an extremely low computational effort,
owing to the
linear dependence of the system in the
control variables.
Finally, we use tools from $\Gamma$-convergence to
provide an estimate of the expected generalization
error.  
\subsection*{Keywords}
Deep Learning,
Linear-Control System, $\Gamma$-convergence,
Gradient Flow, Pontryagin Maximum Principle.
\subsection*{MSC2020 classification}
Primary: 49M05, 49M25. Secondary: 68T07, 49J15.
\end{abstract}

\maketitle

\begin{section}{Introduction}
{Residual Neural Networks (ResNets)
 were originally introduced
in \cite{HZRS16} in order to overcome some 
issues related to the training process of traditional
Deep Learning architectures. Indeed, it had been
observed that, as the number of the layers in 
non-residual architectures is increased,
the learning of the parameters is affected by the 
vanishing of the gradients (see, e.g., \cite{BSF94})
and the accuracy of the network gets rapidly 
saturated (see \cite{HS15}).}
%
%
%
ResNets can be represented as the
composition of non-linear mappings
\begin{equation*} 
\Phi = \Phi_N \circ\ldots \circ \Phi_1, 
\end{equation*}
where $N$ represents the {\it depth} of the Neural
Network and, for every $k=1\ldots N$,
the {\it building blocks} $\Phi_k:\R^n \to \R^n$ are of the form
\begin{equation} \label{eq:ResNet_intro}
\Phi_k(x) = x + \sigma (W_k x + b_k),
\end{equation}
where $\sigma:\R^n \to \R^n$
is a non-linear activation function
that acts component-wise, and $W_k\in \R^{n\times n}$
and $b_k\in \R^n$ are the parameters to be learned.
{In contrast, we recall that in 
non-residual architectures $\bar \Phi = \bar \Phi_N\circ\ldots \circ \bar \Phi_1$, the 
{building blocks} have usually the form
\[
\bar \Phi_k(x) = \sigma (W_k x +b_k)
\]
for $k=1,\ldots,N$.} 
{In some
recent contributions 
\cite{E17,LCTE17,HR17},
ResNets have been studied in the framework of 
Mathematical Control Theory.
The bridge between ResNets and
Control Theory was independently discovered
in \cite{E17} and
\cite{HR17}, where it was observed
that each function $\Phi_1,
\ldots,\Phi_N$ defined as in \eqref{eq:ResNet_intro}
can be interpreted as an Explicit Euler} 
discretization of the control system
\begin{equation} \label{eq:ctrl_sys_ResNet}
\dot x = \sigma(Wx + b),
\end{equation}
where $W$ and $b$ are the control variables.
{Since then, Control Theory has been
fruitfully applied to the theoretical understanding
of ResNets.} 
In \cite{TG20} a Universal 
Approximation result for the flow generated by
\eqref{eq:ctrl_sys_ResNet} was established
under suitable assumptions on the activation function
$\sigma$.
In \cite{LCTE17} and \cite{BC19} the problem 
of learning an
unknown mapping was formulated as an Optimal
Control problem, and the Pontryagin Maximum Principle
was employed in order to learn the optimal parameters.
In \cite{BCFH21} it was consider the mean-field
limit of \eqref{eq:ctrl_sys_ResNet}, and it was
proposed a training algorithm based on the
discretization of the necessary optimality conditions
for an Optimal Control problem in the space
of probability measures.
The Maximum Principle for Optimal Control of 
probability measures was first introduced in 
\cite{P04}, and recently it has been independently 
rediscovered in \cite{BFRS17}.
{In this paper, rather than using 
tools from Control Theory to study properties of 
existing ResNets, we propose an architecture
inspired by theoretical observations
on control systems with {\it linear} dependence
in the control variables. As a matter of fact, the
building blocks of the ResNets that we shall
construct depend linearly in the parameters,
namely they have the form
\begin{equation*}
\Phi_k(x) = x + G(x) u_k,
\end{equation*}
where $G :\R^n\to\R^{n\times l}$
is a non-linear function of the input, and 
$u_k\in \R^l$ is the vector of the parameters at
the $k$-th layer.} 
The starting points of this paper are the 
controllability results proved in \cite{AS1, AS2},
where the authors considered a control system
of the form
\begin{equation} \label{eq:lin_ctrl_sys_intro}
\dot x =F(x)u= \sum_{i=1}^lF_i(x)u_i,
\end{equation}
where $F_1,\ldots,F_l$ are  smooth and bounded
vector fields on $\R^n$,
and $u\in \U:= L^2([0,1],\R^l)$ is the control.
We immediately observe that
\eqref{eq:lin_ctrl_sys_intro} 
has a simpler structure
than \eqref{eq:ctrl_sys_ResNet},
having linear dependence
with respect to the control variables.
Despite this apparent simplicity, the flows
associated to \eqref{eq:lin_ctrl_sys_intro}
are capable of interesting approximating results.
Given a control $u\in \U$, let 
$\Phi_u:\R^n\to\R^n$ be the flow obtained 
by evolving \eqref{eq:lin_ctrl_sys_intro} on the
time interval $[0,1]$ using the control $u$.
In \cite{AS2} it was formulated a condition
for the controlled vector fields $F_1,\ldots,F_l$
called {\it Lie Algebra Strong Approximating
Property}. On one hand, this condition guarantees
{exact controllability} on finite ensembles, i.e., 
for every $M\geq 1$,
for every $\{x^j_{0}\}_{j=1,\ldots,M}\subset\R^n$
such that
$j_1\neq j_2\implies x^{j_1}_{0}\neq x^{j_2}_{0}$,
 and for every 
injective mapping $\Psi:\R^n\to\R^n$, there exists
a control $u\in \U$ such that
$\Phi_u(x^j_{0})=\Psi(x^j_{0})$ for every $j=1,\ldots,M$. 
On the other hand, this property is also a sufficient 
condition for a $C^0$-approximate controllability
result in the space of diffeomorphisms.
More precisely, given a diffeomorphism 
$\Psi:\R^n\to\R^n$ diffeotopic to the identity, and
given a compact set $K\subset\R^n$, for every 
$\e>0$ there exists a control 
$u_\e\in \U$ such that
$\sup_K|\Phi_{u_\e}(x)-\Psi(x)|\leq \e$.
The aim of this paper is to use this machinery
to provide implementable
algorithms for the approximation of diffeomorphisms
diffeotopic to the identity.
{More precisely, we discretize the
control system \eqref{eq:lin_ctrl_sys_intro} 
on the evolution interval $[0,1]$ using the 
Explicit Euler scheme and the uniformly
distributed time-nodes $\{ 0,\frac{1}{N},
\ldots, \frac{N-1}N,1 \}$, and we obtain} the 
ResNet represented by the composition 
$\Phi=\Phi_N \circ \ldots \circ \Phi_1$,
where, for every 
{$k=1,\ldots,N$, $\Phi_k$ is of
the form
\begin{equation}\label{eq:ResNet_lin_intro}
x_{k} =
\Phi_k(x_{k-1}) = x_{k-1} 
+ h\sum_{i=1}^lF_i(x_{k-1})u_{i,k},
\quad h=\frac1N,
\end{equation}
where $x_0\in \R^n$ represents an initial input 
of the network. In this construction, the points
$(x_k)_{k=0,\ldots,N}$ represent the approximations
at the time-nodes $\{\frac{k}{N}\}_{k=0,\ldots,N}$
 of the  
trajectory $x_u:[0,1]\to\R^n$
that solves \eqref{eq:lin_ctrl_sys_intro}
with Cauchy datum $x_u(0)=x_0$.
We insist on the fact that in our discrete-time model
we assume that the controls are piecewise constant 
on the time intervals $\left\{ [\frac{k-1}{N}, 
\frac{k}{N}) \right\}_{k=1,\ldots,N}$. For this 
reason, when we derive a ResNet with $N$ hidden 
layers, we deal with $N$ building blocks 
(i.e., $\Phi_1,\ldots,\Phi_N$) and with $N$
$l$-dimensional parameters 
$(u_k)_{k=1,\ldots,N}
=(u_{i,k})_{k=1,\ldots,N}^{i=1,\ldots,l}$. 
Moreover, when we evaluate the ResNet at
a point $x_0\in \R^n$, we end up with
$N+1$ input/output variables 
$(x_k)_{k=0,\ldots,N}\subset \R^n$.} 


{As anticipated before, we use the 
ResNet \eqref{eq:ResNet_lin_intro} for the 
task of a data-driven reconstruction of 
diffeomorphisms.}
The idea is that we may imagine to observe the action
of an unknown diffeomorphism $\Psi:\R^n \to \R^n$
on a finite ensemble of points $\{ x^{j}_{0}\}_{j=1,
\ldots,M}$ with $M\geq 1$. Using these data,
we try to approximate the mapping $\Psi$.
Assuming that the controlled vector fields 
$F_1,\ldots,F_l$ satisfy the Lie Algebra Strong
Approximating Property, 
a first natural attempt consists in exploiting
the exact controllability
for finite ensembles. Indeed, this ensures that
we can find 
a control $u^M\in \U$ that achieves a null training
error, i.e., the
corresponding flow $\Phi_{u^M}$ satisfies
$\Phi_{u^M}(x^j_{0})
=\Psi(x^j_{0})$ for every $j=1,\ldots,M$.
Unfortunately, when the number of observation $M$
grows, this training strategy
can not guarantee any theoretical upper
bound for the generalization error.
{Indeed, as detailed 
in Section~\ref{sec:def_strat},
in the case $\Psi$ is not itself a flow of
the control system \eqref{eq:lin_ctrl_sys_intro},
 if we pursue the {\it null-training error} 
strategy then
the sequence of the controls $(u^M)_{M\geq1}$
will be unbounded in the $L^2$-norm. Since the 
Lipschitz constant of a flow generated
by \eqref{eq:lin_ctrl_sys_intro} is related to
the $L^2$-norm of the corresponding control, 
it follows that we cannot bound from above the
Lipschitz constants of the approximating
diffeomorphisms $(\Phi_{u^M})_{M\geq1}$. 
Therefore, even though for every $M\geq1$ we achieve
perfect matching $\Phi_{u^M}=\Psi$ on the training
dataset $\{ x_0^j \}_{j=1,\ldots,M}$, we have
no control on the generalization error 
made outside the training points. 
This is an example of {\it overfitting},
a well-known phenomenon in the practice of 
Machine Learning.}

In order to overcome this issue,
{when constructing the approximating 
diffeomorphism we need to find a balance between
the training error and the $L^2$-norm of the 
control.
For this reason} we consider
the non-linear functional $\F^M:\U\to\R$
defined on the space of admissible 
controls as follows:
\begin{equation} \label{eq:cost_intro}
\F^M(u) := \frac1M
\sum_{j=1}^M a(\Phi_u(x^{j}_0) - \Psi(x^{j}_0))
+ \frac{\beta}{2}||u||_{L^2}^2,
\end{equation}
where $a:\R^n \to \R$ is a non-negative function
such that $a(0)=0$, and $\beta>0$ is a fixed 
parameter that tunes the squared $L^2$-norm
regularization. A training strategy alternative to
the one described above consists in looking
for a minimizer $\tilde u^M$ of 
$\F^M$, and taking $\Phi_{\tilde u^M}$ as an
approximation of $\Psi$.
Under the assumption that
the training dataset is sampled from
a compactly-supported
Borel probability measure $\mu$ on $\R^n$,
we prove a $\Gamma$-convergence result.
Namely, we show that, as the size $M$
of the training dataset  tends to infinity,
the problem
of minimizing $\F^M$ {\it converges} to the problem
of minimizing the functional $\F:\U\to\R$, defined as
\begin{equation}\label{eq:G_lim_intro}
\F(u) := \int_{\R^n} a(\Phi_u(x)-\Psi(x)) \,d\mu(x)
+ \frac\beta2 ||u||_{L^2}^2.
\end{equation}
Using this $\Gamma$-convergence result, we deduce the
following upper bound for the expected testing
error:
\begin{equation} \label{eq:test_est_intro}
\mathbb{E}_\mu [a(\Phi_{\tilde u^M}(\cdot)-
\Psi(\cdot))] \leq 2\kappa_\beta + 
C_{a,\Psi,\beta}W_1(\mu_M,\mu),
\end{equation}
where $\tilde u^M$ is a minimizer of $\F^M$,
$\kappa_\beta$ and $C_{a,\Psi,\beta}$ 
are constants such that 
$\kappa_\beta \to 0$ 
and $C_{a,\Psi,\beta}\to\infty$
 as $\beta \to 0$, and
$W_1(\mu_M,\mu)$ is the Wasserstein distance 
between $\mu$ and the empirical measure
\[
\mu_M:= \frac1M \sum_{j=1}^M\delta_{x^j}.
\]
Recalling that $W_1(\mu_M,\mu)\to 0$ as 
$M\to \infty$, inequality
\eqref{eq:test_est_intro} ensures 
that we can achieve arbitrarily small expected 
testing error by choosing $\beta$ small enough and
having a large enough training dataset.
{Indeed, assuming that we can choose the 
size of the dataset, for every $\varepsilon>0$ 
we consider $\beta >0$ such that 
$\kappa_\beta\leq\frac{\e}{2}$, and then we take
$M$ such that $C_{a,\Psi,\beta}W_1(\mu_M,\mu)
\leq \frac{\e}{2}$. 
We insist on the fact that the estimate in 
\eqref{eq:test_est_intro} holds 
{\it a priori} with respect to the choice of
a testing dataset and the 
corresponding computation of
the generalization error.}
We report that the minimization of the functional
\eqref{eq:G_lim_intro} plays a crucial role in
\cite{BCFH21}.

On the base of the previous theoretical results, 
we propose two possible training
algorithms to approximate the unknown
diffeomorphism $\Psi$, having observed its action
on an ensemble $\{ x^j_{0} \}_{j=1,\ldots,M}$.
The first one consists in introducing a 
finite-dimensional subspace $\U_N\subset \U$, 
and in projecting onto $\U_N$ the gradient flow
induced by $\F^M$ in the space of admissible 
controls. Here we make use of 
the gradient flow formulation
for optimal control problems derived in \cite{S21}. 
The second approach is based on the Pontryagin
Maximum Principle, and it is similar to one followed
in \cite{LCTE17}, \cite{BC19}, \cite{BCFH21} 
for control system \eqref{eq:ctrl_sys_ResNet}.
We used the iterative method proposed in
\cite{SS80} for the the numerical resolution
of Optimal Control problems.
The main advantage of dealing with a 
linear-control system is that the 
maximization of the Hamiltonian
(which is a key-step for the controls update)
can be carried out with a low computational effort.
In contrast, in \cite{LCTE17} a non-linear 
optimization software was employed
to manage this task, while in 
\cite{BCFH21} the authors wrote the maximization 
as a root-finding problem and solved it with 
Brent's method.\\
The paper is organized as follows.
In Section~\ref{sec:notations} we establish the
notations and we prove preliminary results regarding
the flow generated by control system 
\eqref{eq:lin_ctrl_sys_intro}.
In Section~\ref{sec:ens_ctrl} we recall 
some results contained in \cite{AS1} and \cite{AS2}
concerning exact and approximate controllability 
of ensembles.
In Section~\ref{sec:def_strat} we explain why 
the ``null training error strategy" is not suitable
for the approximation purpose, and we outline
the alternative strategy based on the resolution
of an Optimal Control problem.
In Section~\ref{sec:G-conv} we prove a 
$\Gamma$-convergence result that holds when the
size $M$ of the training dataset tends to infinity.
As a byproduct, we obtain the upper bound on the
expected generalization error reported in 
\eqref{eq:test_est_intro}. 
In Section~\ref{sec:proj_grad} we describe the 
training algorithm based on the projection of
the gradient flow induced by the cost $\F^M$.
In Section~\ref{sec:train_PMP} we propose the
training procedure based on the Pontryagin
Maximum Principle.
In Section~\ref{sec:num_exp} we test numerically 
the algorithms by approximating a diffeomorphism
in the plane.

\end{section}

\begin{section}{Notations and Preliminary Results}
\label{sec:notations}
In this paper we consider control systems
of the form
\begin{equation}\label{eq:subr_sys_notat}
\dot x =F(x)u= \sum_{i=1}^lF_i(x)u_i,
\end{equation}
where the controlled vector fields 
$(F_i)_{i=1,\ldots,l}$
satisfy the following assumption.

\begin{assumption} \label{ass:bound_fields}
The vector fields $F_1,\ldots,F_l$ are
smooth and bounded\footnote{With minor
adjustments, the hypothesis of 
boundedness of the vector fields can be slightly 
relaxed by requiring a sub-linear growth condition.
We preferred to prove the results for bounded 
vector fields in order to avoid technicalities.} in
$\R^n$, together with their derivatives of each order.
\end{assumption}

In particular, this implies that
there exists $C_1>0$ such that
\begin{equation}\label{eq:lipsc_fields}
\sup_{i=1,\ldots,l}\,\, \sup_{x,y\in\R^n} 
\frac{|F_i(x)-F_i(y)|_2}{|x-y|_2}\leq C_1
\end{equation} 
The space of admissible controls is 
$\U:= L^2([0,1],\R^l)$, endowed with the usual Hilbert
space structure. 
Using Assumption~\ref{ass:bound_fields},
the classical Carat\'eodory Theorem guarantees
that, for every $u\in\U$ and for every $x_0\in\R^n$,
the Cauchy problem
\begin{equation} \label{eq:Cauc_ctrl}
\begin{cases}
\dot x(s) = \sum_{i=1}^lF_i(x(s))u_i(s),\\
x(0)=x_0,
\end{cases}
\end{equation}
has a unique solution $x_{u,x_0}:[0,1]\to\R^n$.
Hence, for every $u\in\U$, we can define the flow
$\Phi_u:\R^n\to\R^n$ as follows:
\begin{equation} \label{eq:flow_def}
\Phi_u: x_0 \mapsto x_{u,x_0}(1),
\end{equation}
where  $x_{u,x_0}$ solves \eqref{eq:Cauc_ctrl}.
We recall that $\Phi_u$ is a diffeomorphism, since
it is smooth and invertible.

In the paper we will make extensive use of the 
weak topology of $\U$. We recall that a sequence
$(u_\ell)_{\ell\geq1}\subset\U$ is weakly convergent
to an element $u\in\U$ if, for every $v\in\U$,
we have that
\[
\lim_{\ell\to\infty}\int_0^1v(s)u_\ell(s)\,ds=
\int_0^1v(s)u(s)\,ds,
\]
and we write $u_\ell\weak_{L^2} u$.
In the following result we consider a sequence of
solutions of \eqref{eq:Cauc_ctrl}, corresponding
to a weakly convergent sequence of controls. 
\begin{lemma} \label{lem:conv_traj}
Let $(u_\ell)_{\ell\geq1}\subset\U$ be a weakly
convergent sequence, such that $u_\ell\weak_{L^2} u$
 as
$\ell\to\infty$.
For every $x_0\in\R^n$ and for every $\ell\geq1$, let
$x_{u_\ell,x_0}:[0,1]\to\R^n$ be the solution of
Cauchy problem \eqref{eq:Cauc_ctrl} corresponding
to the control $u_\ell$. Then
\begin{equation*}
\lim_{\ell\to\infty} 
||x_{u_\ell,x_0}-x_{u,x_0}||_{C^0}=0,
\end{equation*}
where $x_{u,x_0}:[0,1]\to\R^n$ is the solution of
Cauchy problem \eqref{eq:Cauc_ctrl} corresponding
to the control $u$.
\end{lemma}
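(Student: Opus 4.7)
The plan is to write each solution as an integral equation and compare. Set $x_\ell := x_{u_\ell,x_0}$ and $x := x_{u,x_0}$, and write
\[
x_\ell(s) - x(s) = \underbrace{\int_0^s \sum_{i=1}^l \bigl[ F_i(x_\ell(\tau)) - F_i(x(\tau)) \bigr] u_{i,\ell}(\tau)\, d\tau}_{\text{``Lipschitz part''}} + \underbrace{\int_0^s \sum_{i=1}^l F_i(x(\tau))\bigl[u_{i,\ell}(\tau) - u_i(\tau)\bigr] d\tau}_{=:\,\eta_\ell(s)}.
\]
The Lipschitz part is controlled in absolute value by $C_1 \int_0^s |x_\ell(\tau)-x(\tau)|\,|u_\ell(\tau)|_1 d\tau$ thanks to \eqref{eq:lipsc_fields}, so once I know $\eta_\ell \to 0$ uniformly, an application of Gronwall's inequality will finish the argument, using that weak convergence implies $\sup_\ell \|u_\ell\|_{L^2} < \infty$ (hence $\sup_\ell \|u_\ell\|_{L^1} < \infty$ by Cauchy--Schwarz).

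The crux is therefore to prove that $\eta_\ell \to 0$ in $C^0([0,1],\R^n)$. First, for each fixed $s \in [0,1]$, the function $\tau \mapsto \chi_{[0,s]}(\tau)F_i(x(\tau))$ belongs to $L^2([0,1])$ because $x$ is continuous and $F_i$ is bounded by Assumption~\ref{ass:bound_fields}; the definition of weak convergence then yields $\eta_\ell(s) \to 0$ pointwise in $s$. Second, I upgrade this to uniform convergence via equicontinuity: for $0 \le s' \le s \le 1$, Cauchy--Schwarz gives
\[
|\eta_\ell(s) - \eta_\ell(s')| \le \sum_{i=1}^l \|F_i\|_\infty \int_{s'}^s |u_{i,\ell}(\tau) - u_i(\tau)|\, d\tau \le \Bigl( \sum_{i=1}^l \|F_i\|_\infty \Bigr) \sqrt{l\,|s-s'|}\, \|u_\ell - u\|_{L^2},
\]
and the last factor is uniformly bounded in $\ell$. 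Hence $(\eta_\ell)_{\ell \ge 1}$ is uniformly $1/2$-Hölder, and pointwise convergence plus equicontinuity (Arzel\`a--Ascoli) forces uniform convergence $\eta_\ell \to 0$ on $[0,1]$.

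With this in hand, setting $g_\ell(s) := |x_\ell(s) - x(s)|_2$ and $h_\ell := \|\eta_\ell\|_{C^0}$, the estimate above yields
\[
g_\ell(s) \le h_\ell + C_1 \sqrt{l} \int_0^s g_\ell(\tau)\, |u_\ell(\tau)|_2\, d\tau,
\]
so Gronwall's lemma gives $g_\ell(s) \le h_\ell \exp\bigl( C_1\sqrt{l}\, \|u_\ell\|_{L^1} \bigr) \le h_\ell \exp\bigl( C_1\sqrt{l}\, \|u_\ell\|_{L^2} \bigr)$, and the right-hand side tends to $0$ uniformly in $s$ as $\ell \to \infty$.

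The main obstacle is really the passage from pointwise to uniform convergence of $\eta_\ell$: weak convergence by itself only gives pointwise information when tested against the indicator-truncated functions, and one must exploit the $L^2$-boundedness of the $u_\ell$ to obtain the Hölder equicontinuity that unlocks Arzel\`a--Ascoli. Everything else is a routine Lipschitz/Gronwall bookkeeping.
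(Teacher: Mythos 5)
Your proof is correct, but it follows a genuinely different route from the paper's. The paper argues by compactness: weak convergence of $(u_\ell)_{\ell\geq1}$ gives a uniform $L^2$-bound, hence the trajectories are bounded in $W^{1,2}([0,1],\R^n)$; one extracts a $W^{1,2}$-weakly convergent subsequence, uses the compact embedding $W^{1,2}\hookrightarrow C^0$ to get uniform convergence to some limit $x_\infty$, identifies $x_\infty$ as a solution of the Cauchy problem \eqref{eq:Cauc_ctrl} by passing to the limit in $\dot x_{u_{\ell_k}}=F(x_{u_{\ell_k},x_0})u_{\ell_k}$ (strong times weak convergence), and concludes by uniqueness of solutions plus the standard subsequence argument. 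You instead work directly with the integral equation and split the difference into a Lipschitz term and the error $\eta_\ell$, prove $\eta_\ell\to0$ pointwise by testing the weak convergence against $\chi_{[0,s]}F_i(x(\cdot))\in L^2$, upgrade to uniform convergence via the uniform $1/2$-H\"older bound coming from Cauchy--Schwarz and the $L^2$-boundedness, and close with Gronwall. Your argument is more quantitative: it yields the explicit estimate $\|x_{u_\ell,x_0}-x_{u,x_0}\|_{C^0}\leq \|\eta_\ell\|_{C^0}\,e^{C_1\sqrt{l}\,\|u_\ell\|_{L^2}}$, so the rate of trajectory convergence is tied to the decay of the weak-convergence error, and it avoids subsequence extraction and the uniqueness-of-limit bookkeeping; the price is that it leans on the global Lipschitz bound \eqref{eq:lipsc_fields}, whereas the paper's soft compactness argument only needs continuity-type information once boundedness of the trajectories is secured, and it reuses the same $W^{1,2}$/Ascoli machinery that reappears in Proposition~\ref{prop:conv_flows}. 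Both are complete proofs of the Lemma.
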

\begin{proof}
Since the sequence $(u_\ell)_{\ell\geq1}$ is weakly
convergent, it is bounded in $\U$. This fact and
Assumption~\ref{ass:bound_fields} imply that
there exists a compact set $K\subset \R^n$ such that
$x_{u_\ell,x_0}(s)\in K$ for every $\ell\geq1$ and for
every $s\in[0,1]$. Moreover, being 
$(F_i)_{i=1,\ldots,l}$ bounded, we deduce that
\[
||\dot x_{u_\ell,x_0}||_{L^2} \leq C||u_l||_{L^2}
\]
for some constant $C>0$.
Then it follows that the sequence is 
bounded in $W^{1,2}([0,1],\R^n)$, 
and, as a matter of fact, that it is pre-compact
with respect to the weak topology of $W^{1,2}$.
Therefore, there exists a subsequence
$(x_{u_{\ell_k},x_0})_{k\geq 1}$ 
and $x_\infty\in W^{1,2}$ such that
$x_{u_{\ell_k},x_0}\weak_{W^{1,2}}x_\infty$
as $k\to\infty$.
Using the compact inclusion
$W^{1,2}([0,1],\R^n)\hookrightarrow C^0([0,1],\R^n)$,
we deduce that $x_{u_{\ell_k},x_0}\to_{C^0}x_\infty$.
In particular, we obtain that
$x_\infty(0) =x_0$.
On the other hand, we have that 
$F(x_{u_{\ell_k},x_0})u_{\ell_k} \weak_{L^2} F(x_\infty)u$, since $F(x_{u_{\ell_k},x_0})
\to_{C^0} F(x_\infty)$. Recalling  that 
$\dot x_{u_{\ell_k}}= 
F(x_{u_{\ell_k},x_0})u_{\ell_k}$
and that 
$x_{u_{\ell_k}} \weak_{W^{1,2}} \dot x_{\infty}$
implies
$\dot x_{u_{\ell_k}} \weak_{L^2} \dot x_{\infty}$,
we obtain that 
\[
\begin{cases}
\dot x_\infty = F(x_\infty)u,\\
x_\infty(0) = x_0,
\end{cases}
\]
i.e., $x_\infty \equiv x_{u,x_0}$. Since the reasoning
above is valid for every possible $W^{1,2}$-weak
limiting point of $(x_{u_{\ell},x_0})_{\ell\geq1}$,
we deduce that the whole sequence is 
$W^{1,2}$-weakly convergent
to $x_{u,x_0}$. Using again the compact inclusion
$W^{1,2}([0,1],\R^n)\hookrightarrow C^0([0,1],\R^n)$,
we deduce the thesis.
\end{proof}

We now prove an estimate of the Lipschitz constant
of the flow $\Phi_u:\R^n\to\R^n$ for $u\in\U$.

\begin{lemma} \label{lem:lipsch_flows}
For every admissible control
$u\in\U$, let $\Phi_u:\R^n\to\R^n$ be
corresponding flow defined in \eqref{eq:flow_def}.
Then $\Phi_u$ is Lipschitz-continuous with constant
\begin{equation} \label{eq:lipsc_flow}
L_{\Phi_u}\leq e^{C||u||_{L^2}},
\end{equation}
where $C>0$ depends only on the controlled 
fields $(F_i)_{i=1,\ldots,l}$ and on the dimension
of the ambient space.   
\end{lemma}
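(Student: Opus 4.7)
The plan is to control the divergence of two trajectories starting from different initial data by a standard Grönwall argument, using the Lipschitz bound \eqref{eq:lipsc_fields} on the controlled vector fields and then converting an $L^1$-norm into an $L^2$-norm via Cauchy--Schwarz.

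First I would fix $x_0, y_0 \in \R^n$ and consider the two trajectories $x_{u,x_0}, x_{u,y_0}: [0,1]\to\R^n$ solving \eqref{eq:Cauc_ctrl} with the same control $u$. Subtracting the two ODEs yields
\[
x_{u,x_0}(t) - x_{u,y_0}(t) = (x_0 - y_0) + \int_0^t \sum_{i=1}^l \bigl(F_i(x_{u,x_0}(s)) - F_i(x_{u,y_0}(s))\bigr) u_i(s)\, ds.
\]
Taking Euclidean norms and invoking \eqref{eq:lipsc_fields} together with the elementary inequality $\sum_{i=1}^l |u_i(s)| \le \sqrt{l}\, |u(s)|_2$ gives
\[
|x_{u,x_0}(t) - x_{u,y_0}(t)|_2 \le |x_0 - y_0|_2 + C_1 \sqrt{l} \int_0^t |u(s)|_2 \, |x_{u,x_0}(s) - x_{u,y_0}(s)|_2 \, ds.
\]

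Next I would apply Grönwall's inequality to the function $t \mapsto |x_{u,x_0}(t)-x_{u,y_0}(t)|_2$ with integrable weight $C_1\sqrt{l}\,|u(\cdot)|_2 \in L^1([0,1])$, obtaining
\[
|x_{u,x_0}(t) - x_{u,y_0}(t)|_2 \le |x_0-y_0|_2 \exp\!\Bigl( C_1 \sqrt{l} \int_0^t |u(s)|_2 \, ds\Bigr).
\]
Specializing to $t=1$ and bounding the $L^1$-norm by the $L^2$-norm via Cauchy--Schwarz, $\int_0^1 |u(s)|_2 \, ds \le \|u\|_{L^2}$, yields
\[
|\Phi_u(x_0) - \Phi_u(y_0)|_2 \le |x_0 - y_0|_2 \, e^{C_1 \sqrt{l}\, \|u\|_{L^2}},
\]
which is exactly \eqref{eq:lipsc_flow} with $C := C_1 \sqrt{l}$, depending only on the vector fields $F_1,\ldots,F_l$ and on $l$.

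There is no real obstacle here: the only subtlety is noticing that the bound on the vector fields themselves (from Assumption~\ref{ass:bound_fields}) is not needed for the Lipschitz estimate, only their Lipschitz constant \eqref{eq:lipsc_fields} is used, and that one must pass from the natural $L^1$-in-time bound produced by Grönwall to the $L^2$-in-time bound stated in the lemma via Cauchy--Schwarz (which is why the estimate has $\|u\|_{L^2}$ in the exponent rather than $\|u\|_{L^1}$).
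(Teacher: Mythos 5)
Your argument is correct, but it takes a different route from the paper. You estimate the distance between two solutions $x_{u,x_0}$ and $x_{u,y_0}$ directly, via the integral form of Gr\"onwall's inequality with the $L^1$-in-time weight $C_1\sqrt{l}\,|u(\cdot)|_2$, and then pass to the $L^2$-norm by Cauchy--Schwarz. The paper instead works at the level of the differential of the flow: it writes $D_{x_0}\Phi_u(w)=v(1)$, where $v$ solves the variational (linearized) equation along the trajectory $x_{u,x_0}$, derives the differential inequality $\frac{d}{ds}|v(s)|_2^2\leq 2C_1\bigl(\sum_{i=1}^l|u_i(s)|\bigr)|v(s)|_2^2$, and concludes $|v(1)|_2\leq e^{C\|u\|_{L^2}}|v(0)|_2$ after the same $\sum_i|u_i(s)|\lesssim |u(s)|_2$ and Cauchy--Schwarz steps. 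Your approach is the more elementary one: it needs only the Lipschitz bound \eqref{eq:lipsc_fields} on the fields and produces the global Lipschitz estimate for $\Phi_u$ without invoking differentiability of the flow or the linearized system; the paper's approach bounds $\sup_{x_0}\|D_{x_0}\Phi_u\|$, which is slightly stronger in spirit (an estimate on the differential, useful if one later wants $C^1$-type information) but relies on the smoothness of the flow. Note also that your constant comes out as $C_1\sqrt{l}$ (number of controlled fields), whereas the paper uses $\sum_{i=1}^l|u_i(s)|\leq\sqrt{n}\,|u(s)|_2$; since $l$ is determined by the family $F_1,\ldots,F_l$, your constant is consistent with the statement of the lemma, and your observation that boundedness of the fields plays no role here is accurate.
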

\begin{proof}
For every $x_0,w\in\R^n$ we have that 
$D_{x_0}\Phi_u(w)=v(1)$, where $v:[0,1]\to\R^n$
solves the linearized system
\[
\begin{cases}
\dot v(s) = \left(\sum_{i=1}^l u_i(s)
D_xF_i(x_{u,x_0}(s))  \right)v(s),\\
v(0)=w,
\end{cases}
\]
where $x_{u,x_0}:[0,1]\to\R^n$ is the controlled
trajectory that solves \eqref{eq:Cauc_ctrl}, and 
for every $i=1,\ldots,l$ the expression
$D_xF_i=(D_{x_1}F_i,\ldots,D_{x_n}F_i)$ 
denotes the Jacobian of the field $F_i$. 
Therefore, using \eqref{eq:lipsc_fields}, we
deduce that 
\[
\frac{d}{ds}|v(s)|^2_2 \leq 2
C_1
\left( \sum_{i=1}^l|u_i(s)|
\right)
|v(s)|^2_2.
\]
Recalling that $\sum_{i=1}^l|u_i(s)|\leq 
\sqrt n |u(s)|_2$ and that
\[
\int_0^1|u(s)|_2 \, ds 
\leq \left( \int_0^1|u(s)|_2^2 \, ds \right)^{\frac12}
= ||u||_{L^2},
\]
we deduce that
\[
|v(1)|_2^2 \leq e^{2C||u||_{L^2}} |v(0)|_2^2,
\]
where $C>0$ is a constant that depends only on 
$C_1$ and on the dimension $n$ of the ambient space.
Finally this shows that
\[
\frac{|D_{x_0}\Phi_u(w)|_2}{|w|_2}\leq 
e^{C||u||_{L^2}},
\]
proving the thesis.
\end{proof}

The next result regards the convergence of the 
flows $(\Phi_{u_\ell})_{\ell\geq1}$ corresponding to
a weakly convergent sequence 
$(u_\ell)_{\ell\geq1} \subset\U$.

\begin{proposition}\label{prop:conv_flows}
Given a sequence $(u_\ell)_{\ell\geq 1}\subset \U$ 
such that $u_\ell\weak_{L^2} u$ as $\ell\to\infty$
with respect to the weak topology of $\U$, then
the flows $(\Phi_{u_\ell})_{\ell\geq1}$ converge to 
$\Phi_u$ uniformly on compact sets.
\end{proposition}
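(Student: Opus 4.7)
The plan is to combine the pointwise convergence guaranteed by Lemma~\ref{lem:conv_traj} with the uniform Lipschitz estimate provided by Lemma~\ref{lem:lipsch_flows}, and then upgrade pointwise convergence to uniform convergence on compact sets by a standard equi-continuity argument.

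First I would exploit the weak convergence of $(u_\ell)_{\ell\geq1}$ to extract a uniform bound $\|u_\ell\|_{L^2}\leq R$ for some $R>0$ (weak convergence implies norm-boundedness). Plugging this into Lemma~\ref{lem:lipsch_flows} yields
\[
L_{\Phi_{u_\ell}}\leq e^{CR}\qquad\text{for every }\ell\geq1,
\]
and by lower semicontinuity of the $L^2$-norm under weak convergence we also have $\|u\|_{L^2}\leq R$, so $L_{\Phi_u}\leq e^{CR}$ too. Hence the family $\{\Phi_{u_\ell}\}_{\ell\geq 1}\cup\{\Phi_u\}$ is equi-Lipschitz, and therefore equi-continuous, on all of $\R^n$.

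Next, for every fixed $x_0\in\R^n$, Lemma~\ref{lem:conv_traj} applied to the weakly convergent sequence $(u_\ell)_{\ell\geq1}$ gives $x_{u_\ell,x_0}\to x_{u,x_0}$ in $C^0([0,1],\R^n)$, and in particular $\Phi_{u_\ell}(x_0)=x_{u_\ell,x_0}(1)\to x_{u,x_0}(1)=\Phi_u(x_0)$. So we have pointwise convergence on the whole of $\R^n$.

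To conclude, fix a compact set $K\subset\R^n$ and $\varepsilon>0$. Using the equi-Lipschitz constant $L:=e^{CR}$, choose $\delta>0$ with $L\delta<\varepsilon/3$ and cover $K$ with finitely many balls $B(y_1,\delta),\dots,B(y_p,\delta)$ centered at points $y_1,\dots,y_p\in K$. By the pointwise convergence established above, there exists $\ell_0$ such that $|\Phi_{u_\ell}(y_j)-\Phi_u(y_j)|<\varepsilon/3$ for every $j=1,\dots,p$ and every $\ell\geq\ell_0$. For any $x\in K$, pick $y_j$ with $|x-y_j|<\delta$; the triangle inequality and the equi-Lipschitz bound then yield $|\Phi_{u_\ell}(x)-\Phi_u(x)|<\varepsilon$ for all $\ell\geq\ell_0$, uniformly in $x\in K$, which is the desired uniform convergence. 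I do not expect any real obstacle here: the only substantive input is the weak-to-strong upgrade already done inside Lemma~\ref{lem:conv_traj}; the remaining work is the routine Arzelà--Ascoli-style packaging of pointwise convergence plus equi-continuity.
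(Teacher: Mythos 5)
Your proof is correct and follows essentially the same route as the paper: both rest on Lemma~\ref{lem:conv_traj} for pointwise convergence and Lemma~\ref{lem:lipsch_flows} (with the norm-boundedness of a weakly convergent sequence) for equi-Lipschitz continuity. The only cosmetic difference is the final packaging: the paper invokes Ascoli--Arzel\`a and identifies the unique $C^0$-limit point, while you run the standard finite-cover $\varepsilon/3$ argument directly, which is an equivalent way of upgrading pointwise to uniform convergence on compact sets.
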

\begin{proof}
Let $K\subset\R^n$ be a compact set. 
For every $\ell\geq1$ let us define 
$\Phi_{u_\ell,K}:=\Phi_{u_\ell}|_K$,
and $\Phi_{u,K}:=\Phi_{u}|_K$.
Being weakly convergent, the sequence 
$(u_\ell)_{\ell\geq1}$ is bounded in $\U$. This fact
and Assumption~\ref{ass:bound_fields}
imply
 that there exists a compact set $K'\supset K$
such that any solution of \eqref{eq:Cauc_ctrl}
with initial datum in $K$ is entirely contained in
$K'$. In particular, we deduce that
$\Phi_{u_\ell}(K)\subset K'$ for every $\ell\geq1$,
i.e., the sequence $(\Phi_{u_\ell,K})_{\ell\geq1}$
is equi-bounded.
Moreover, using the boundedness 
of $(u_\ell)_{\ell\geq1}\subset\U$ 
and Lemma~\ref{lem:lipsch_flows}, we obtain that
$(\Phi_{u_\ell,K})_{\ell\geq1}$
is an equi-continuous family of diffeomorphisms.
By Ascoli-Arzel\`a Theorem, it follows
that $(\Phi_{u_\ell,K})_{\ell\geq1}$ is pre-compact
with respect to the $C^0$ topology.
On the other hand, Lemma~\ref{lem:conv_traj} 
implies that for every $x\in K$
$\Phi_{u_\ell,K}(x)\to \Phi_{u,K}(x)$
as $\ell\to\infty$.
This guarantees that the set of limiting points
with respect to the $C^0$ topology
of the sequence  $(\Phi_{u_\ell,K})_{\ell\geq1}$
is reduced to the single element $\Phi_{u,K}$.
\end{proof}

\end{section}

\begin{section}{Ensemble Controllability}\label{sec:ens_ctrl}
In this section we recall the most important results
regarding the issue of ensemble controllability.
For the proofs and the statements in full generality
we refer the reader to \cite{AS1} and \cite{AS2}.
We begin with the definition of ensemble in $\R^n$.
In this section we will further assume that
$n>1$, which is the most interesting case.

\begin{definition}\label{def:ensemble}
Given a compact set $\Theta\subset \R^n$, an 
{\it ensemble of points}
 in $\R^n$ is an injective and continuous map
$\gamma:\Theta\to\R^n$. 
We denote by $\ensemb(\R^n)$ the space of 
ensembles.    
\end{definition}
\begin{remark}
If $|\Theta|=M<\infty$, then an ensemble 
can be simply thought
as an injective map from $\{1,\ldots,M \}$
to $\R^n$, or, equivalently,
as an element of $(\R^n)^M\setminus \Delta^{(M)}$,
where 
\[\Delta^{(M)}:=\{ (x^1,\ldots,x^M)
\in (\R^n)^M: \exists j_1\neq j_2
\mbox{ s.t. } x^{j_1}=x^{j_2} \}.
\]
We define $(\R^n)^{(M)}:=(\R^n)^M\setminus 
\Delta^{(M)}$. Given a vector field $F:\R^n\to\R^n$, we define
its {\it $M$-fold} vector field 
$F^{(M)}:(\R^n)^{(M)}\to(\R^n)^{(M)}$ as
$F^{(M)}(x^1,\ldots,x^M)=(F(x^1),\ldots,F(x^M))$,
for every $(x^1,\ldots,x^M)\in (\R^n)^{(M)}$.
Finally, we introduce the notation $\ensembM(\R^n)$
to denote the space of ensembles of $\R^n$ with
$M$ elements.
\end{remark}

We give the notion of {\it reachable} ensemble.
\begin{definition}\label{def:ex_reach}
The ensemble $\gamma(\cdot)\in \ensemb(\R^n)$
is {\it reachable} from the ensemble $\alpha(\cdot)\in 
\ensemb(\R^n)$ if there exists an admissible
control $u\in\U$ such that its
corresponding flow $\Phi_u$ defined in 
\eqref{eq:flow_def} satisfies:
\[
\Phi_u(\alpha(\cdot)) = \gamma(\cdot).
\]
We can equivalently say that 
$\alpha(\cdot)$ can be steered to 
$\gamma(\cdot)$.
\end{definition}

\begin{definition}\label{def:ex_controll}
Control system \eqref{eq:subr_sys_notat} is
{\it exactly controllable} from $\alpha(\cdot)\in 
\ensemb(\R^n)$ if every $\gamma(\cdot)
\in \ensemb(\R^n)$ is reachable from $\alpha(\cdot)$.
The system is {\it globally exactly controllable}
if it is exactly controllable from every
$\alpha(\cdot)\in \ensemb(\R^n)$.
\end{definition}

We recall the definition of Lie algebra generated by
a system of vector fields.
Given the vector fields $F_1,\ldots,F_l$,
the linear space $\mathrm{Lie}(F_1,\ldots,F_l)$ is
defined as
\begin{equation} \label{eq:Lie_gen}
\mathrm{Lie}(F_1,\ldots,F_l):=\mathrm{span}
\{ [F_{i_s},[\ldots ,[F_{i_2},F_{i_1}]\ldots]]
: s\geq1, i_1,\ldots,i_s\in \{ 1,\ldots,l \}\},
\end{equation}
where $[F,F']$ denotes the Lie bracket between 
$F,F'$, smooth vector fields of $\R^n$.
In the case of finite ensembles, i.e., when 
$|\Theta|=M<\infty$,
we can provide sufficient condition
for controllability. The proof reduces to the
classical Chow-Rashevsky theorem (see ,e.g, the
textbook \cite{AS}).

\begin{theorem}\label{thm:fin_controll}
Let $F_1,\ldots,F_l$ be a system of vector fields
on $\R^n$.
Given $M\geq 1$, if for
every $x^{(M)}=(x^1,\ldots,x^M)\in (\R^n)^{(M)}$
the system of 
M-fold vector fields $F^{(M)}_1,\ldots,F^{(M)}_l$ 
is bracket generating at $x^{(M)}$, i.e.,
\begin{equation} \label{eq:brack_gen}
\mathrm{Lie}(F^{(M)}_1,\ldots,F^{(M)}_l)|_{x^{(M)}}
= (\R^n)^M,
\end{equation}
 then
the control system \eqref{eq:subr_sys_notat}
is globally exactly controllable on
$\ensembM(\R^n)$.
\end{theorem}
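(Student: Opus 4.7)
The plan is to lift the ensemble-controllability problem from $\R^n$ to the $M$-fold configuration space $(\R^n)^{(M)}$ and then apply the classical Chow--Rashevsky Theorem to the $M$-fold system $F_1^{(M)},\ldots,F_l^{(M)}$, which by hypothesis is bracket generating everywhere on $(\R^n)^{(M)}$.

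First I would observe that $(\R^n)^{(M)}$ is an open connected submanifold of $\R^{nM}$ of dimension $nM$: openness is immediate because $\Delta^{(M)}$ is closed, and connectedness follows from $n>1$, since $\Delta^{(M)}$ is a finite union of linear subspaces each of codimension $n\geq 2$, hence its complement is path-connected. Next, I would note the key compatibility between the original and lifted dynamics: given any ensemble $\alpha=(x^1,\ldots,x^M)\in (\R^n)^{(M)}$ and any control $u\in\U$, the $M$-tuple $t\mapsto (x_{u,x^1}(t),\ldots,x_{u,x^M}(t))$ of solutions of \eqref{eq:Cauc_ctrl} is exactly the trajectory in $(\R^n)^{(M)}$ driven by the linear-in-control system $\dot X = \sum_{i=1}^l F_i^{(M)}(X)\,u_i$ starting from $\alpha$; moreover, since $\Phi_u$ is a diffeomorphism of $\R^n$, distinct initial points produce distinct trajectories at every time, so this lifted curve indeed stays in $(\R^n)^{(M)}$ and never enters $\Delta^{(M)}$. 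Thus reaching $\gamma=(y^1,\ldots,y^M)$ from $\alpha$ in the original system is equivalent to steering $\alpha$ to $\gamma$ in the lifted system on $(\R^n)^{(M)}$.

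Then I would invoke Chow--Rashevsky on the open connected manifold $(\R^n)^{(M)}$: the bracket generating condition \eqref{eq:brack_gen} at every point guarantees that the attainable set from any point coincides with the whole manifold, so $\gamma$ is reachable from $\alpha$ by some measurable, essentially bounded control $v:[0,T]\to\R^l$ defined on some time horizon $T>0$. Since system \eqref{eq:subr_sys_notat} is linear in the controls, a standard time-reparametrization $s=t/T$ with $u(s):=T\,v(Ts)$ produces an equivalent trajectory on $[0,1]$, and the resulting $u$ lies in $L^2([0,1],\R^l)=\U$, proving exact controllability from $\alpha$ to $\gamma$. Since $\alpha,\gamma\in\ensembM(\R^n)$ were arbitrary, global exact controllability on $\ensembM(\R^n)$ follows.

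I expect the only delicate point to be justifying that the lifted trajectory never leaves $(\R^n)^{(M)}$, which would be needed to apply Chow--Rashevsky on that manifold rather than on $\R^{nM}$; but this is immediate from the fact that $\Phi_u$ is a diffeomorphism, as recalled after \eqref{eq:flow_def}. The rest of the argument is a straightforward application of Chow--Rashevsky together with the time rescaling exploiting linearity in the controls.
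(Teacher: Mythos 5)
Your proposal is correct and follows exactly the route the paper indicates: the paper proves this statement by reducing it to the classical Chow--Rashevsky theorem applied to the $M$-fold system on the open connected manifold $(\R^n)^{(M)}$ (connectedness using $n>1$), which is precisely your lifting argument, completed by the time-rescaling permitted by linearity in the controls. Your filled-in details (invariance of $(\R^n)^{(M)}$ under the lifted flow and the $L^2$ membership of the rescaled control) are sound.
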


For a finite ensemble, the global exact 
controllability holds for a generic system of
vector fields. 

\begin{proposition} \label{prop:gener_finite}
For every $l\geq2$, $M\geq 1$ and 
$m$ sufficiently large,
then the $l$-tuples of vector fields $(F_1,\ldots,F_l)
\in (\mathrm{Vect}(\R^n))^l$ such that
system \eqref{eq:subr_sys_notat} is globally exactly
controllable on $\ensembM(\R^n)$ is residual
with respect to the Whitney $C^m$-topology.
\end{proposition}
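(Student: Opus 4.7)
The plan is to combine Theorem~\ref{thm:fin_controll} with a jet transversality argument: it suffices to prove that the set
\[
\mathcal{G}_M := \bigl\{ (F_1,\ldots,F_l) \in (\mathrm{Vect}(\R^n))^l : \mathrm{Lie}(F_1^{(M)},\ldots,F_l^{(M)})|_{x^{(M)}} = (\R^n)^M \text{ for all } x^{(M)}\in(\R^n)^{(M)} \bigr\}
\]
is residual in the Whitney $C^m$-topology, since then Theorem~\ref{thm:fin_controll} immediately yields global exact controllability on $\ensembM(\R^n)$ for every such $l$-tuple.

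First I would fix an integer $s \geq 1$, to be chosen later in terms of $n$, $l$, $M$, and restrict attention to iterated brackets of length at most $s$. The value at $x^{(M)} = (x^1,\ldots,x^M)$ of any such bracket of the $M$-fold fields $F_1^{(M)},\ldots,F_l^{(M)}$ is a polynomial expression in the $(s-1)$-jets of the $F_i$'s at the points $x^1,\ldots,x^M$. Hence the \emph{failure set} $\Sigma_s$, consisting of all jet tuples whose associated bracket matrix has rank strictly less than $nM$, is a closed semi-algebraic subvariety of the fiber bundle over $(\R^n)^{(M)}$ whose fiber at $x^{(M)}$ is $\prod_{j=1}^M J^{s-1}_{x^j}(\R^n,\R^{nl})$, cut out by the vanishing of all $nM\times nM$ minors.

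Next I would estimate the fiberwise codimension of $\Sigma_s$. Here the hypothesis $l \geq 2$ plays the crucial role: the free Lie algebra on $l \geq 2$ generators admits exponentially many linearly independent bracket monomials of length $\leq s$, and at any prescribed lower-order jet the higher-order Taylor coefficients of the $F_i$ can be tuned to independently realize prescribed values of sufficiently many such brackets at $x^{(M)}$. A careful combinatorial count then shows that the codimension of $\Sigma_s$ can be made arbitrarily large by enlarging $s$; in particular, for $s$ large enough one has $\mathrm{codim}\,\Sigma_s > nM = \dim (\R^n)^{(M)}$.

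Finally, Thom's jet transversality theorem in the Whitney $C^m$-topology (with $m \geq s-1$) provides a residual subset of $l$-tuples $(F_1,\ldots,F_l)$ whose $(s-1)$-jet extension is transverse to $\Sigma_s$ over $(\R^n)^{(M)}$; by the codimension estimate, transversality forces the preimage of $\Sigma_s$ in $(\R^n)^{(M)}$ to be empty, which means that \eqref{eq:brack_gen} holds at every $x^{(M)}$. The technical heart of the argument is the codimension estimate in the third paragraph: one must exhibit, around any prescribed finite-order jet, enough independent higher-order perturbations of the $F_i$ that alter a large collection of iterated brackets at $x^{(M)}$ without uncontrolled interference, and then quantify $s = s(n,l,M)$ so that $\mathrm{codim}\,\Sigma_s > nM$. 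Once this combinatorial codimension bound is in place, the transversality machinery and Theorem~\ref{thm:fin_controll} jointly conclude the proof.
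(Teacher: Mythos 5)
You should first be aware that the paper does not prove this proposition at all: Section~\ref{sec:ens_ctrl} explicitly recalls it from the literature, referring to \cite{AS1} and \cite{AS2} (see also the genericity results of \cite{ABS}), so there is no internal proof to compare against. Your overall strategy --- reduce, via Theorem~\ref{thm:fin_controll}, to showing that the bracket-generating condition \eqref{eq:brack_gen} for the $M$-fold fields holds at every $x^{(M)}\in(\R^n)^{(M)}$ for a residual set of $l$-tuples, and obtain this by a Thom-type transversality argument in a bundle of jets at the $M$ distinct points, with a codimension estimate on the ``failure'' variety --- is the natural route and is in the same spirit as the arguments used in the cited works. Two technical remarks on the setup: what you need is the \emph{multijet} transversality theorem (jets of the same tuple at $M$ distinct points simultaneously), not the ordinary jet transversality theorem, and since $\Sigma_s$ is only a semialgebraic set, transversality must be applied stratum by stratum after fixing a stratification; both points are standard but should be stated.

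The genuine gap is the codimension estimate, which you yourself flag as ``the technical heart'' and then only assert. The claim that for $s=s(n,l,M)$ large enough the set of multijets at which the brackets of length $\le s$ fail to span $(\R^n)^M$ has codimension exceeding $nM$ is exactly the nontrivial content of the proposition, and the heuristic offered --- that the free Lie algebra on $l\ge2$ generators has many bracket monomials whose values at $x^{(M)}$ can be ``tuned independently'' by higher-order Taylor coefficients --- does not constitute a proof. The map from the multijet to the collection of bracket values is polynomial but far from a submersion in any obvious graded sense: bracket values are constrained by antisymmetry and the Jacobi identity, different bracket monomials of the same length share Taylor coefficients, and, crucially, the condition is a \emph{joint} spanning condition across the $M$ points (the same Lie element is evaluated at all of $x^1,\dots,x^M$), so one cannot argue point by point. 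Quantifying how the codimension of $\Sigma_s$ grows with $s$, uniformly over the prescribed lower-order jet, is precisely the combinatorial/algebraic work carried out in \cite{ABS} and \cite{AS1}; without it the transversality machinery has nothing to act on. A complete argument would either reproduce such an estimate or cite it, and should also record the finite-order bookkeeping needed to pass from the usual $C^\infty$ statement of (multi)jet transversality to residuality in the Whitney $C^m$-topology for a fixed large $m$, which is what the statement of the proposition requires.
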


We recall that a set is said {\it residual} if
it is the complement of a countable union of
nowhere dense sets. 
Proposition~\ref{prop:gener_finite} 
means that, given any $l$-tuple $(F_1,\ldots,F_l)$ of
vector fields, the corresponding control system
\eqref{eq:subr_sys_notat} can be made 
globally exactly controllable in $\ensembM(\R^n)$
 by means of an arbitrary small perturbation 
of the fields $F_1,\ldots,F_l$
in the $C^m$-topology.

When dealing with infinite ensembles, the 
notions of ``exact reachable" and 
``exact controllable" are too strong.
However, they can be replaced by their
respective $C^0$-approximate versions.

\begin{definition} \label{def:app_reach}
The ensemble $\gamma(\cdot)\in \ensemb(\R^n)$
is {\it $C^0$-approximately
reachable} from the ensemble $\alpha(\cdot)\in 
\ensemb(\R^n)$ if for every
$\e>0$ there exists an admissible
control $u\in\U$ such that its
corresponding flow $\Phi_u$ defined in 
\eqref{eq:flow_def} satisfies:
\begin{equation} \label{eq:app_reach}
\sup_{\Theta}|\Phi_u(\alpha(\cdot)) - \gamma(\cdot)
|_2
<\e.
\end{equation}
We can equivalently say that 
$\alpha(\cdot)$ can be $C^0$-approximately steered to 
$\gamma(\cdot)$.
\end{definition}

\begin{definition}\label{def:app_controll}
Control system \eqref{eq:subr_sys_notat} is
{\it $C^0$-approximately controllable} from 
$\alpha(\cdot)\in \ensemb(\R^n)$
 if every $\gamma(\cdot)
\in \ensemb(\R^n)$ is 
$C^0$-approximately reachable from $\alpha(\cdot)$.
The system is {\it globally
$C^0$-approximately  controllable}
if it is $C^0$-approximately controllable from every
$\alpha(\cdot)\in \ensemb(\R^n)$.
\end{definition}

\begin{remark}
Let us further assume that 
the compact set $\Theta \subset \R^n$
has positive Lebesgue measure, and that it is
equipped with a finite and positive measure $\mu$,
absolutely continuous w.r.t. the Lebesgue measure.
Then, the distance between the target ensemble 
$\gamma(\cdot)$ and the approximating ensemble
$\Phi_u(\alpha(\cdot))$ can be quantified using
the $L_\mu^p$-norm:
\[
||\Phi_u(\alpha(\cdot)) -\gamma(\cdot)||_{L_\mu^p}=
\left(
\int_\Theta |\Phi_u(\alpha(\theta)) -\gamma(\theta)|_2^p
\,d\mu(\theta)
\right)^{\frac1p},
\]
and we can equivalently formulate the notion
of {$L_\mu^p$-approximate} controllability.
In general, given a non-negative continuous function
$a:\R^n\to\R$ such that $a(0)=0$, we can express
the approximation error as
\begin{equation}\label{eq:int_cost}
\int_\Theta a(\Phi_u(\alpha(\theta)) -\gamma(\theta))
\,d\mu(\theta).
\end{equation}
In Section~\ref{sec:G-conv}
we will consider an integral
penalization term of this form.
It is important to observe that, if $\gamma(\cdot)$ is 
$C^0$-approximately reachable from $\alpha(\cdot)$,
then \eqref{eq:int_cost} can be made arbitrarily
small.
\end{remark}

Before stating the next result we introduce some 
notations. 
Given a vector field $X:\R^n\to\R^n$ and a compact set 
$K\subset \R^n$, we define
\[
||X||_{1,K}:= \sup_{x\in K}\left(|X(x)|_2 + 
\sum_{i=1}^n|D_{x_i}X(x)|_2 \right). 
\]
Then we set
\[
\mathrm{Lie}^\delta_{1,K}(F_1,\ldots,F_l):=
\{
X\in \mathrm{Lie}(F_1,\ldots,F_l):
||X||_{1,K}\leq \delta
\}.
\]
We now formulate the assumption required
for the approximability result.
\begin{assumption} \label{ass:str_Lie}
The system of vector fields $F_1,\ldots,F_l$ satisfies
the {\it Lie algebra strong approximating property},
i.e., there exists $m\geq1$ such that, for every
$C^m$-regular vector field $Y:\R^n\to\R^n$ and for
each compact set $K\subset\R^n$ there exists 
$\delta>0$ such that
\begin{equation} \label{eq:Lie_alg_cond}
\inf \left\{ \sup_{x\in K}|X(x)-Y(x)|_2 \,\, \Big|
X\in \mathrm{Lie}_{1,K}^\delta (F_1,\ldots,F_l) \right
\}=0.
\end{equation}
\end{assumption}

The next result establishes a Universal Approximating
Property for flows.

\begin{theorem}\label{thm:univ_app_flows}
Let $\Psi:\R^n\to\R^n$ be a diffeomorphism diffeotopic
to the identity. Let $F_1,\ldots,F_l$ be a system of 
vector fields satisfying 
Assumptions~\ref{ass:bound_fields} 
and~\ref{ass:str_Lie}. Then for each compact set
$K\subset\R^n$ and each $\e>0$ there exists an
admissible control $u\in\U$ such that 
\begin{equation}\label{eq:Univ_app}
\sup_{x\in K}|\Psi(x)-\Phi_u(x)|_2\leq \e,
\end{equation}
where $\Phi_u$ is the flow corresponding to
the control $u$ defined in \eqref{eq:flow_def}.
\end{theorem}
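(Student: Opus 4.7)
The plan is to reduce the statement to Assumption~\ref{ass:str_Lie} via a time-discretization of a smooth isotopy connecting $\Psi$ to the identity, followed by a bracket-realization of the resulting Lie-algebra elements as flows of \eqref{eq:subr_sys_notat} driven by piecewise-constant controls.

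First, because $\Psi$ is diffeotopic to the identity, there exists a smooth isotopy $(\Psi_t)_{t\in[0,1]}$ with $\Psi_0=\mathrm{id}$ and $\Psi_1=\Psi$. Differentiating yields the smooth time-dependent vector field $Y_t(x):=\partial_t\Psi_t(\Psi_t^{-1}(x))$, so that $\Psi$ is the time-$1$ map of $\dot x=Y_t(x)$. By continuity all trajectories starting in $K$ stay in an enlarged compact set $K'\supset K$, which will play the role of the compact set in which every subsequent estimate is performed.

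Next, fix $N$ large and set $h=1/N$. Replace $Y_t$ by its piecewise-constant-in-time approximation $\bar Y_t:=Y_{(j-1)h}$ on $[(j-1)h,jh]$: by smoothness of $t\mapsto Y_t$ and standard Gr\"onwall estimates on $K'$, the time-$1$ flow of $\bar Y$ approximates $\Psi$ uniformly on $K$ within $\e/2$ once $N$ is large enough. Thus it suffices to approximate each of the $N$ time-independent flows $\Phi^{h}_{Y_j}$ (with $Y_j:=Y_{(j-1)h}$) uniformly on $K'$ within $\e/(2N)$, because composition errors then add up to at most $\e/2$ by telescoping. Apply now Assumption~\ref{ass:str_Lie} to each $Y_j$: one obtains a vector field $X_j\in\mathrm{Lie}(F_1,\ldots,F_l)$ with $\|X_j\|_{1,K'}\leq\delta$ and $\sup_{K'}|X_j-Y_j|_2$ as small as needed, which, after a further Gr\"onwall application, gives $\sup_{K'}|\Phi^{h}_{X_j}-\Phi^{h}_{Y_j}|_2<\e/(2N)$.

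Since $X_j$ is an iterated bracket polynomial in $F_1,\ldots,F_l$, its time-$h$ flow can in turn be approximated uniformly on $K'$ by a concatenation of short-time flows along $\pm F_1,\ldots,\pm F_l$, i.e., by a flow of \eqref{eq:subr_sys_notat} with piecewise-constant controls supported on a subdivision of $[(j-1)h,jh]$, via the classical commutator formula $\Phi^{\sqrt\tau}_{-F_{i_2}}\circ\Phi^{\sqrt\tau}_{-F_{i_1}}\circ\Phi^{\sqrt\tau}_{F_{i_2}}\circ\Phi^{\sqrt\tau}_{F_{i_1}}=\mathrm{id}+\tau[F_{i_1},F_{i_2}]+o(\tau)$ and its iterations for higher-order brackets. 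Concatenating these controls for $j=1,\ldots,N$ produces a single $u\in\U$ whose flow satisfies \eqref{eq:Univ_app}.

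\medskip

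\noindent\textbf{Main obstacle.} The delicate step is the bracket realization: one has to rescale between the clock time of the control system and the effective bracket time, keep the resulting piecewise-constant controls compatible with the partition $\{jh\}_{j=0,\ldots,N}$, and ensure that the $N$ bracket-level approximation errors are simultaneously summable below $\e/2$. The quantitative bound $\|X_j\|_{1,K'}\leq\delta$ provided by Assumption~\ref{ass:str_Lie} is precisely what allows this bracket realization to be carried out with a uniform modulus in $j$, while boundedness of the $F_i$ (Assumption~\ref{ass:bound_fields}) guarantees that every trajectory entering the construction stays inside $K'$.
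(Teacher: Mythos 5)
The paper itself does not prove Theorem~\ref{thm:univ_app_flows}: it is recalled from \cite{AS1} and \cite{AS2}, so there is no internal proof to compare against. Your outline follows the strategy of those works: write $\Psi$ as the time-one flow of a non-autonomous field $Y_t$, freeze $Y_t$ on a fine partition of $[0,1]$, replace each frozen field $Y_j$ by an element $X_j\in\mathrm{Lie}(F_1,\ldots,F_l)$ that is $C^0$-close on an enlarged compact set and has $\|X_j\|_{1,K'}\leq\delta$ (exactly what Assumption~\ref{ass:str_Lie} supplies; the $\delta$-bound is what makes the Gr\"onwall estimates and the confinement to $K'$ uniform), and finally realize the flows of the $X_j$ by the control system \eqref{eq:subr_sys_notat}. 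Up to that last step, your reductions are sound modulo routine care (e.g.\ the per-step tolerance must be $\e/(2NL)$ rather than $\e/(2N)$ to absorb Lipschitz propagation in the telescoping).

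The last step, however, which you correctly single out as the main obstacle, is where essentially all the content of the theorem lies, and your sketch asserts it rather than proves it. Two concrete points. First, the commutator identity you quote gives an error $o(\tau)$ for one bracket over one short slot; to realize the time-$h$ flow of a general $X_j=\sum_k c_k B_k$ (arbitrary bracket depth and coefficients) you need a Trotter-type splitting combined with iterated commutator formulas, repeated over $h/\tau$ slots, with the accumulated error shown to vanish uniformly on $K'$ — none of this bookkeeping is indicated, and it is precisely the part that \cite{AS1,AS2} handle with their Lie-extension arguments instead of explicit product formulas. Second, in the telescoping of the $N$ composition errors, per-step errors are propagated through the remaining maps; the approximants produced by the commutator construction involve total flow times of order $h/\sqrt{\tau}\to\infty$, so by Lemma~\ref{lem:lipsch_flows} their Lipschitz constants (tied to the exploding $L^2$-norms of the realizing controls) are not under control. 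The telescoping must therefore be arranged so that errors are pushed through the target flows of the $X_j$ (whose Lipschitz constants are controlled by $\delta$), not through the control-system approximants. With these points made precise the argument can be completed, but as written the decisive step remains an appeal to ``classical'' facts rather than a proof.
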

We recall that $\Psi:\R^n\to\R^n$ is diffeotopic to the 
identity if and only if
there exists a family of diffeomorphisms
$(\Psi_s)_{s\in[0,1]}$ smoothly depending on $s$
 such that $\Psi_0=\mathrm{Id}$ and
$\Psi_1=\Psi$. In this case, $\Psi$ can be seen
as the flow generated by the non-autonomous vector
field $(s,x)\mapsto Y_s(x)$, where
\[
Y_s(x) := \left.
\frac{d}{d\e}\right|_{\e=0}\Psi_{s+\e}(x).
\]
From Theorem~\ref{thm:univ_app_flows} we can deduce
a $C^0$-approximate reachability result for infinite
ensembles.

\begin{corollary}
Let $\alpha(\cdot),\beta(\cdot)\in \ensemb(\R^n)$
be diffeotopic, i.e., there exists a diffeomorphism
$\Psi:\R^n \to \R^n$ diffeotopic
to the identity such that 
$\gamma = \Psi \circ \alpha$. Then $\gamma(\cdot)$
is $C^0$-approximate reachable from $\alpha(\cdot)$.
\end{corollary}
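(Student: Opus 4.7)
The plan is to reduce the corollary directly to Theorem~\ref{thm:univ_app_flows}. The key observation is that the compact set $K$ needed in the universal approximation statement can be chosen to be the image of $\Theta$ under $\alpha$, and then the sup-norm approximation on $K$ automatically translates into the ensemble sup-norm appearing in Definition~\ref{def:app_reach}.

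First I would fix $\varepsilon > 0$ and set $K := \alpha(\Theta) \subset \R^n$. Since $\Theta$ is compact and $\alpha$ is continuous, $K$ is a compact subset of $\R^n$. By hypothesis, the diffeomorphism $\Psi:\R^n\to\R^n$ is diffeotopic to the identity, and the vector fields $F_1,\ldots,F_l$ satisfy Assumptions~\ref{ass:bound_fields} and~\ref{ass:str_Lie} (these are the standing hypotheses under which the ensemble theory is developed). Hence Theorem~\ref{thm:univ_app_flows} applies with this choice of $K$ and $\varepsilon$, yielding an admissible control $u \in \U$ such that
\[
\sup_{x \in K} |\Psi(x) - \Phi_u(x)|_2 \leq \varepsilon.
\]

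Next I would translate this bound into the language of ensembles. For every $\theta \in \Theta$ we have $\alpha(\theta) \in K$, so
\[
|\Phi_u(\alpha(\theta)) - \gamma(\theta)|_2
= |\Phi_u(\alpha(\theta)) - \Psi(\alpha(\theta))|_2
\leq \sup_{x \in K} |\Phi_u(x) - \Psi(x)|_2 \leq \varepsilon,
\]
where in the first equality I use the assumption $\gamma = \Psi \circ \alpha$. Taking the supremum over $\theta \in \Theta$ gives $\sup_{\Theta} |\Phi_u(\alpha(\cdot)) - \gamma(\cdot)|_2 \leq \varepsilon$, which is exactly \eqref{eq:app_reach}. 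Since $\varepsilon>0$ was arbitrary, $\gamma(\cdot)$ is $C^0$-approximately reachable from $\alpha(\cdot)$, as required.

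There is essentially no obstacle: the entire content is packaged inside Theorem~\ref{thm:univ_app_flows}, and the corollary is just the observation that approximation in sup-norm on the compact set $\alpha(\Theta)$ implies approximation in the ensemble sense. The only small thing to check is that the standing Assumptions \ref{ass:bound_fields} and~\ref{ass:str_Lie} are in force throughout this section, which is implicit from the framing of Section~\ref{sec:ens_ctrl}.
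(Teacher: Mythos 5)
Your proof is correct and is exactly the argument the paper intends: the corollary is stated as an immediate consequence of Theorem~\ref{thm:univ_app_flows}, obtained by taking $K=\alpha(\Theta)$ (compact since $\Theta$ is compact and $\alpha$ continuous) and rewriting $\gamma=\Psi\circ\alpha$ pointwise. The only cosmetic point is that Definition~\ref{def:app_reach} asks for a strict inequality, so one should invoke the theorem with $\varepsilon/2$ (or note that $\varepsilon$ is arbitrary), which changes nothing of substance.
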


\begin{remark} \label{rem:str_Lie_br_gen}
If a system of vector fields satisfies
Assumption~\ref{ass:str_Lie}, then, for every
$M\geq1$ and for every $x^{(M)}\in(\R^n)^{(M)}$
Lie bracket generating condition \eqref{eq:brack_gen}
is automatically satisfied 
(see \cite[Theorem~4.3]{AS2}). This means that
Assumption~\ref{ass:str_Lie} guarantees 
global exact controllability in
$\ensembM(\R^n)$,
and $C^0$-approximate reachability 
for infinite diffeotopic ensembles.
\end{remark}

We conclude this section with the exhibition of 
a system of vector fields in $\R^n$ meeting
Assumptions~\ref{ass:bound_fields} 
and~\ref{ass:str_Lie}.

\begin{theorem} \label{thm:sys_contr}
For every $n>1$ and $\nu>0$,
consider the vector fields in $\R^n$
\begin{equation}\label{eq:v_fields_control}
{\bar F_i}(x) := \frac{\partial}{\partial x_i},
\quad 
{\bar F'_i}(x) := e^{-\frac{1}{2\nu}|x|^2}\frac{\partial}{\partial x_i},
\quad i=1,\ldots,n.
\end{equation}
Then the system {
$\bar F_1,\ldots,\bar F_n,\bar F'_1,\ldots,\bar F'_n$}
satisfies Assumptions~\ref{ass:bound_fields} 
and~\ref{ass:str_Lie}. 
\end{theorem}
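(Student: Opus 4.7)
The plan is to verify the two assumptions separately. Assumption~\ref{ass:bound_fields} is essentially immediate: each $\bar F_i$ is the constant unit vector $e_i$, so $|\bar F_i|_2 = 1$ and all its derivatives vanish, while each $\bar F'_i$ equals $e^{-|x|^2/(2\nu)} e_i$ and its derivatives of every order are polynomials in $x$ times the Gaussian, hence globally bounded on $\R^n$. The real work is to check the Lie algebra strong approximating property of Assumption~\ref{ass:str_Lie}.

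For this I would first compute iterated Lie brackets. Since every $\bar F_i$ is constant, for any vector field $B(x) = f(x)\, e_j$ one has $[\bar F_i, B](x) = (\partial_{x_i} f)(x)\, e_j$, so by induction
\[
[\bar F_{i_s},[\bar F_{i_{s-1}},\ldots,[\bar F_{i_1},\bar F'_j]\ldots]](x) = \frac{\partial^s}{\partial x_{i_1}\cdots\partial x_{i_s}}\left(e^{-|x|^2/(2\nu)}\right) e_j.
\]
These partial derivatives of the Gaussian are, up to sign and normalization, multivariate Hermite polynomials times the Gaussian itself. Since such Hermite polynomials form a basis of $\R[x_1,\ldots,x_n]$, the linear span of these brackets contains every vector field of the form $p(x)\, e^{-|x|^2/(2\nu)}\, e_j$ with $p$ an arbitrary polynomial and $j\in\{1,\ldots,n\}$; all of these therefore belong to $\mathrm{Lie}(\bar F_1,\ldots,\bar F'_n)$.

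Next, given a $C^1$-regular vector field $Y=\sum_j Y_j e_j$ and a compact $K\subset\R^n$, I would set $g_j(x) := Y_j(x)\, e^{|x|^2/(2\nu)}$, which is $C^1$ on $K$. By the classical Weierstrass approximation theorem in the $C^1$-norm on compact sets, for every $\e>0$ one can pick polynomials $p_j^\e$ with $\|p_j^\e - g_j\|_{C^1(K)}<\e$. The vector field
\[
X^\e(x) := \sum_{j=1}^n p_j^\e(x)\, e^{-|x|^2/(2\nu)}\, e_j
\]
belongs to $\mathrm{Lie}(\bar F_1,\ldots,\bar F'_n)$ by the previous step, and since multiplication by the nowhere-vanishing Gaussian is $C^1$-bicontinuous on $K$, one obtains $\|X^\e - Y\|_{C^1(K)}\to 0$ as $\e\to 0$. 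Choosing $\delta := \|Y\|_{1,K}+1$, this gives $X^\e\in\mathrm{Lie}^\delta_{1,K}(\bar F_1,\ldots,\bar F'_n)$ for all sufficiently small $\e$, so the required $C^0$-approximation in \eqref{eq:Lie_alg_cond} follows \emph{a fortiori}.

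The main obstacle to watch is that \eqref{eq:Lie_alg_cond} asks for a $C^0$-approximation \emph{subject to} the uniform bound $\|X\|_{1,K}\leq\delta$; a direct $C^0$-approximation of $Y$ by polynomial-times-Gaussian fields would give no control on their $\|\cdot\|_{1,K}$-norm. The argument above sidesteps this by performing the approximation in $C^1$ from the outset, exploiting the non-vanishing of the Gaussian on $K$ to transfer polynomial approximation of the rescaled components $g_j$ into Gaussian-modulated approximation of the $Y_j$ in the required norm.
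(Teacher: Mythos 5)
Your proposal is correct, and it actually supplies more than the paper does: for Theorem~\ref{thm:sys_contr} the paper gives no argument at all, deferring to \cite{AS1} and \cite{AS2} for the proofs of the results in Section~\ref{sec:ens_ctrl}. Your route is essentially the one underlying those references: since the $\bar F_i$ are constant, the iterated brackets $[\bar F_{i_s},[\ldots,[\bar F_{i_1},\bar F'_j]\ldots]]$ produce exactly $\partial^s_{x_{i_1}\cdots x_{i_s}}\bigl(e^{-|x|^2/(2\nu)}\bigr)e_j$, i.e.\ Hermite polynomials times the Gaussian, whose span over all orders gives every field $p(x)e^{-|x|^2/(2\nu)}e_j$ with $p$ polynomial; the verification of Assumption~\ref{ass:bound_fields} is immediate as you say. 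The point you flag as the main obstacle is indeed the crux of Assumption~\ref{ass:str_Lie}: the infimum in \eqref{eq:Lie_alg_cond} ranges only over $\mathrm{Lie}^{\delta}_{1,K}$, so a bare $C^0$-approximation is useless without a uniform bound on $\|X\|_{1,K}$, and your device of approximating $g_j:=Y_je^{|x|^2/(2\nu)}$ in the $C^1(K)$-norm (legitimate since $Y$ is $C^1$ on all of $\R^n$, so one can extend to a cube containing $K$ and use simultaneous polynomial approximation of the function and its first derivatives) and then multiplying back by the nowhere-vanishing Gaussian yields $\|X^\e\|_{1,K}\to\|Y\|_{1,K}$, so $\delta=\|Y\|_{1,K}+1$ works and the property holds already with $m=1$. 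The only cosmetic caution is to note explicitly that the paper's definition \eqref{eq:Lie_gen} allows the innermost factor of the bracket to be $\bar F'_j$, which is what your induction uses; with that observed, the argument is complete and self-contained, which is a genuine added value over the paper's citation-only treatment.
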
 

\begin{remark} \label{rmk:sys_fields}
If we consider the vector fields 
$\bar F_1,\ldots, \bar F_n,
\bar F'_1,\ldots,\bar F'_n$
defined in \eqref{eq:v_fields_control},
then Theorem~\ref{thm:univ_app_flows} 
and Theorem~\ref{thm:sys_contr} guarantee 
that the flows generated by the 
corresponding linear-control system can
approximate on compact sets diffeorphisms
diffeotopic to the identity.
From a theoretical viewpoint, this approximation
result cannot be strengthened by enlarging the 
family of controlled vector fields, since the flows
produced by any controlled dynamical system are
themselves diffeotopic to the identity.
On the other hand, in view of the discretization
of the dynamics and the consequent construction of   
the ResNet, it could be useful to enrich the
system of the controlled fields.
As suggested by Assumption~\ref{ass:str_Lie}, 
the expressivity of the linear-control 
system is more directly related to 
the space $\mathrm{Lie}(F_1,\ldots,F_l)$,
rather than to the family $F_1,\ldots,F_l$ 
itself. However, as we are going to see, 
``reproducing''  the flow of a field that
belongs to $\mathrm{Lie}(F_1,\ldots,F_l)
\setminus \mathrm{span}\{ F_1,\ldots,F_l \}$
can be expensive.
Let us consider an evolution step-size $h\in(0,1/4)$
and let us choose two of the controlled
vector fields, say $F_1,F_2$, and let us assume that
$[F_1,F_2]\in \mathrm{Lie}(F_1,\ldots,F_l)
\setminus \mathrm{span}\{ F_1,\ldots,F_l \}$.
Let us denote by 
$e^{\pm hF_i}:\R^n\to\R^n, i=1,2$ the flows 
obtained by evolving $\pm F_i, i=1,2$ for 
an amount of time equal to $h$.
Then, using for instance the computations 
in \cite[Subsection~2.4.7]{AS},
for every $x\in K$ compact we obtain that
\begin{equation*}
\left( e^{-hF_2}\circ e^{-hF_1}\circ
e^{hF_2}\circ e^{hF_1}\right) (x)
=  e^{h^2[F_1,F_2]}(x) + o(h^2)
\end{equation*}
as $h\to0$.
The previous computation shows that, in order
to approximate the effect of evolving the vector
field $[F_1,F_2]$ for an amount of time equal to 
$h^2$, we need tho evolve the fields $\pm F_i, i=1,2$
for a total amount $4h$. If $h$ represents the
discretization step-size used to derive the
ResNet \eqref{eq:ResNet_lin_intro}
from the linear-control system
\eqref{eq:subr_sys_notat} on the interval $[0,1]$,
 then we have that
$h=\frac1N$, where $N$ is the number of layers of the
ResNet. The argument above suggests that we need 
to use $4$ layers of the network to ``replicate''
the effect of evolving $[F_1,F_2]$ for the amount
of time $h^2=\frac1{N^2}$ (note that $h^2\ll h$ 
when $N\gg 1$).
This observation provides an insight 
for the practical choice of the system of 
controlled fields. In first place, the system
$F_1,\ldots,F_l$ should meet 
Assumption~\ref{ass:str_Lie}. 
If the ResNet obtained from the
discretization of the system does not seem to be  
expressive enough, it should be considered to
enlarge the family of the controlled fields,
for example
by including some elements of
$\mathrm{span}\{[F_{i_1},F_{i_2}]:i_1,i_2\in \{1,
\ldots,l \}\}$ (or, more generally, of
$\mathrm{Lie}(F_1,\ldots,F_l)$). 
We insist on the fact that
this procedure increases the width of the network,
since the larger is the number of 
 fields in the control system,
the larger is the number of
parameters per layer in the ResNet.
\end{remark}

\end{section}

\begin{section}{Approximation of Diffeomorphisms:
Robust Strategy} \label{sec:def_strat}
In this section we introduce the central problem
of the paper, i.e.,
the {\it training} of  control system 
\eqref{eq:subr_sys_notat}
in order to approximate
an unknown diffeomorphism $\Psi:\R^n\to\R^n$
diffeotopic to the identity.
A typical situation that may arise in applications
is that we want to approximate $\Psi$ on a compact
set $K\subset\R^n$, having observed the action of
$\Psi$ on a finite number of {\it training} points 
$\{ x^1_{0},\ldots,x^M_{0} \}\subset K$.
Our aim is to formulate a strategy that is 
{\it robust} with respect to the size $M$ of the
training dataset, and for which we can 
give upper bounds for the {\it generalization} error.   
In order to obtain higher and higher
degree of approximation,
we may thing to triangulate the compact set
$K$ with an increasing number of nodes where we
can evaluate the unknown map $\Psi$.
Using the language introduced in 
Section~\ref{sec:ens_ctrl}, we have that, for every
$M\geq1$, we may understand
a triangulation of $K$ with $M$ nodes
as an ensemble $\alpha^M(\cdot)\in\ensembM(\R^n)$.
After evaluating $\Psi$ in the nodes, we obtain
the target ensemble
$\gamma^M(\cdot)\in \ensembM(\R^n)$ as 
$\gamma^M(\cdot):=\Psi(\alpha^M(\cdot))$.

If the vector fields $F_1,\ldots,F_l$ that
define control system \eqref{eq:subr_sys_notat}
meet Assumptions~\ref{ass:bound_fields} 
and~\ref{ass:str_Lie}, then 
Theorem~\ref{thm:fin_controll}
and Remark~\ref{rem:str_Lie_br_gen} may suggest
a first natural attempt to design an approximation
strategy. Indeed, for every $M$, we can
{\it exactly} steer the ensemble $\alpha^M(\cdot)$
to the ensemble $\gamma^M(\cdot)$ with an admissible
control $u^M\in\U$.
Hence, we can choose the flow $\Phi_{u^M}$ defined
in \eqref{eq:flow_def}
as an approximation of $\Psi$ on $K$, achieving 
a {\it null training error}.
Assume that, for every $M\geq1$, the
corresponding triangulation is a 
$\epsilon^M$-approximation
of the set $K$, i.e., for every $y\in K$ there
exists $\bar j\in\{ 1,\ldots,M \}$ such that 
$|y-x^{\bar j, M}_{0}|_2\leq \epsilon^M$, where 
$x^{\bar j, M}_{0}:=\alpha^M(\bar j)$.
Then, for every $y\in K$,
we can give the following estimate for the
generalization error:
\begin{align*}
|\Psi(y) - \Phi_{u^M}(y)|_2&\leq
|\Psi(y) - \Psi(x^{\bar j, M}_{0})|_2 +
|\Phi_{u^M}(x^{\bar j, M}_{0}) - \Phi_{u^M}(y)|_2 \\
&\leq L_{\Psi}\epsilon^M + L_{\Phi_{u^M}}\epsilon^M,
\end{align*}
where $L_{\Psi},  L_{\Phi_{u^M}}$ are
respectively the Lipschitz constants of 
$\Psi$ and $\Phi_{u^M}$.
Assuming (as it is natural to do)
that $\epsilon^M \to 0$ as $M\to\infty$, 
the strategy of achieving zero training error
works if, for example, the Lipschitz constants
of the approximating flows $(\Phi_{u^M})_{M\geq1}$
keep bounded. This in turn would follow if the
sequence of controls $(u^M)_{M\geq 1}$ were
bounded in $L^2$-norm. However, as we are going
to see in the following proposition, in general
this is not the case.
Let us define 
\begin{equation*}
\mathrm{Flows}_K(F_1,\ldots,F_l):=
\{
\Phi_u: u\in \U
\},
\end{equation*}
the space of flows restricted to $K$
obtained via \eqref{eq:flow_def}
with admissible controls, and let 
$\mathrm{Diff}_K^0(\R^n)$ be the space of 
diffeomorphisms diffeotopic to the identity
restricted to $K$.
Theorem~\ref{thm:univ_app_flows} guarantees that,
for every $K\subset \R^n$,
\[
\overline{\mathrm{Flows}_K(F_1,\ldots,F_l)}^{C^0}
= \mathrm{Diff}_K^0(\R^n).
\]
\begin{proposition} \label{prop:unbound_app}
Given a diffeomorphism
 $\Psi \in \mathrm{Diff}_K^0(\R^n)
\setminus \mathrm{Flows}_K(F_1,\ldots,F_l)$ and
an approximating sequence 
$(\Phi_{u_\ell})_{\ell\geq1} \subset 
\mathrm{Flows}_K(F_1,\ldots,F_l)$ such that
$\Phi_{u_\ell}\to_{C^0}\Psi$ on $K$, then
the sequence of controls $(u_\ell)_{\ell\geq1}\subset
\U$ is unbounded in the $L^2$-norm.
\end{proposition}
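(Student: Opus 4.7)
The natural strategy is to argue by contradiction, exploiting the weak compactness of bounded sets in the Hilbert space $\U = L^2([0,1],\R^l)$ together with the continuity result for flows already established in Proposition~\ref{prop:conv_flows}.

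The plan is as follows. Suppose, for contradiction, that the sequence $(u_\ell)_{\ell\geq1}$ is bounded in $L^2$-norm. Since $\U$ is a Hilbert space and bounded sets are weakly pre-compact, there exists a subsequence $(u_{\ell_k})_{k\geq1}$ and an element $u_\infty\in\U$ such that $u_{\ell_k}\weak_{L^2}u_\infty$ as $k\to\infty$. By Proposition~\ref{prop:conv_flows}, applied with the compact set $K\subset\R^n$, we deduce that $\Phi_{u_{\ell_k}}\to \Phi_{u_\infty}$ uniformly on $K$. On the other hand, by the hypothesis on the original sequence, $\Phi_{u_\ell}\to\Psi$ uniformly on $K$, and hence the same holds along the subsequence. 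Uniqueness of the $C^0$-limit on $K$ forces $\Phi_{u_\infty}|_K = \Psi|_K$, which means $\Psi\in\mathrm{Flows}_K(F_1,\ldots,F_l)$, contradicting the standing assumption that $\Psi\notin\mathrm{Flows}_K(F_1,\ldots,F_l)$.

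The only nontrivial ingredient is the passage from weak convergence of controls to uniform convergence of flows, but this is exactly the content of Proposition~\ref{prop:conv_flows}, which in turn rests on Lemma~\ref{lem:conv_traj} and Lemma~\ref{lem:lipsch_flows}. There is no real obstacle here: the proof is a short compactness argument once the continuity property of the control-to-flow map with respect to the weak topology is available. The main thing to be careful about is formally invoking the uniqueness of the uniform limit on $K$ so as to identify $\Phi_{u_\infty}$ with $\Psi$ on $K$, and this is immediate from the triangle inequality in $C^0(K,\R^n)$.
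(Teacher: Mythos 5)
Your proposal is correct and coincides with the paper's own argument: contradiction, weak compactness of bounded sets in $\U$, Proposition~\ref{prop:conv_flows} to pass from weak convergence of controls to $C^0$-convergence of flows on $K$, and uniqueness of the uniform limit to conclude $\Psi=\Phi_{u_\infty}$ on $K$, contradicting $\Psi\notin\mathrm{Flows}_K(F_1,\ldots,F_l)$. No gaps to report.
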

\begin{proof}
By contradiction, let $(u_\ell)_{\ell\geq1}$
be a bounded sequence in $\U$. Then, we can extract
a subsequence $(u_{\ell_k})_{k\geq1}$ weakly
convergent to $u\in\U$. In virtue of
Proposition~\ref{prop:conv_flows}, we have that
$\Phi_{u_{\ell_k}}\to_{C^0}\Phi_u$ on $K$. However,
since $\Phi_{u_\ell}\to_{C^0}\Psi$ on $K$, we deduce
that $\Psi =\Phi_u$ on $K$, but this contradicts the
hypothesis $\Psi \in \mathrm{Diff}_K^0(\R^n)
\setminus \mathrm{Flows}_K(F_1,\ldots,F_l)$.
\end{proof}

The previous result sheds light on a
weakness of the approximation strategy described 
above. Indeed, the main drawback is that we have no
bounds on the norm of the controls $(u^M)_{M\geq1}$,
and therefore, even though the triangulation 
of $K$ is fine, we cannot give an {\it a priori}
estimate of the testing error. 
We point out that, in 
the different framework of simultaneous
control of several systems, a similar situation
was described in \cite{ABS}. 

\subsection{Approximation via Optimal Control}
In order to avoid the issues described above,
we propose a training
strategy based on the solution of an
Optimal Control problem with a regularization term
penalizing the $L^2$-norm of the control.
Namely, given a set of training points
$\{ x^1_{0},\ldots,x^M_{0} \}\subset K$,
 we consider the nonlinear functional
$\F^M:\U\to\R$ defined as follows:
\begin{equation}\label{eq:fun_fin}
\F^M(u):= \frac1M \sum_{j=1}^M 
a(\Phi_u(x^j_{0})-\Psi(x^j_{0})) + \frac\beta2 
|| u ||^2_{L^2},
\end{equation}
where $a:\R^n\to\R$ is a smooth loss
function such that
$a\geq 0$ and $a(0) =0$, and $\beta>0$ is a fixed 
parameter. 
The functional $\F^M$ is composed by two competing
terms: the first aims at achieving a low
{\it mean} training error, while the second aims at
keeping bounded the $L^2$-norm of the control.
In this framework, it is worth assuming that the 
compact set $K$ is equipped with a Borel probability 
measure $\mu$. In this way, we can give higher
weight to the regions in $K$ where we need more
accuracy in the approximation. 
As done before, for every $M\geq1$
 we understand the training
dataset as an ensemble $\alpha^M(\cdot)\in
\ensembM(\R^n)$. Moreover, we associate to it the 
discrete probability measure $\mu_M$ defined as 
\begin{equation}\label{eq:discr_meas}
\mu_M := \frac1M \sum_{j=1}^M\delta_{\alpha(j)},
\end{equation}
and we can equivalently express the mean training
error as
\begin{equation*}
\frac1M \sum_{j=1}^M 
a(\Phi_u(x^j_{0})-\Psi(x^j_{0})) = \int_{\R^n}a(\Phi_u(x)-
\Psi(x))\, d\mu_M(x).
\end{equation*}
From now on, when considering datasets growing in 
size, we make the following assumption on the sequence
of probability measures $(\mu_M)_{M\geq1}$.
\begin{assumption} \label{ass:weak_conv}
There exists a Borel probability measure 
$\mu$ supported in the compact set $K\subset \R^n$
such that the sequence
$(\mu_M)_{M\geq 1}$ is weakly convergent to $\mu$, 
i.e.,
\begin{equation} \label{eq:weak_conv_meas}
\lim_{M\to\infty} \int_{\R^n} f(x)\, d\mu_M =
\int_{\R^n} f(x)\, d\mu,
\end{equation}
for every bounded continuous function $f:\R^n\to\R$.
Moreover, we ask that $\mu_M$ 
is supported in $K$ for every $M\geq1$.  
\end{assumption}  
\begin{remark}
The request of Assumption~\ref{ass:weak_conv} is
rather natural. Indeed, if the elements of the
ensembles $\alpha^M(\cdot)\in \ensembM(\R^n)$ are 
sampled using the probability
distribution $\mu$ associated to the compact set $K$, 
it follows from the law of large numbers that
\eqref{eq:weak_conv_meas} holds.
On the other hand, since we ask that 
all the ensembles are contained in the
compact set $K$,
we have that the sequence of probability
measures $(\mu_M)_{M\geq 1}$  is tight. Therefore,
in virtue of Prokhorov Theorem, $(\mu_M)_{M\geq 1}$
is sequentially weakly pre-compact (for details
see, e.g., \cite{C10}).
\end{remark}

\begin{remark}
When $K=\overline{\mathrm{int}(K)}$, if 
for every $M$ $\alpha^M(\cdot)$ is a 
$\epsilon^M$-approximation of $K$
such that $\epsilon^M\to 0$ as $M\to\infty$,
then the corresponding sequence of probability
measures $(\mu_M)_{M\geq1}$ is weakly convergent to
$\mu = \mathcal{L}|_K$, where $\mathcal{L}$ denotes
the Lebesgue measure in $\R^n$.
\end{remark}

In the next result we prove that the functional
$\F^M$ attains minimum for every $M\geq1$.
Moreover, we can give an upper bound for the
$L^2$-norm of the minimizers of
$\F^M$ that does not depend
on $M$.

\begin{proposition}\label{prop:bound_norm}
For every $M\geq 1$ the functional $\F^M:\U\to\R$
defined in \eqref{eq:fun_M} admits a minimizer.
Moreover, if Assumption~\ref{ass:weak_conv} is
met, then there exists $C_\beta>0$ such that, for 
every $M\geq 1$, any minimizer $\tilde u^M$ of $\F^M$
satisfies the following inequality:
\begin{equation} \label{eq:up_bound_norm}
||\tilde u^M||_{L^2} \leq C_\beta.
\end{equation}
\end{proposition}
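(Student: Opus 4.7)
The plan is to combine a direct-method argument (for existence) with a straightforward comparison against the zero control (for the uniform bound), both of which are made possible by the weak lower semicontinuity of $\F^M$ and the compactness provided by Assumption~\ref{ass:weak_conv}.

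For existence, I would apply the direct method in $\U$. Since $a \ge 0$, we have $\F^M \ge 0$, so $m^M := \inf_\U \F^M \in [0,+\infty)$ is finite. Taking a minimizing sequence $(u_k)_{k\ge 1}\subset \U$, the coercive regularization term yields
\begin{equation*}
\frac{\beta}{2} \|u_k\|_{L^2}^2 \le \F^M(u_k) \le m^M+1
\end{equation*}
for $k$ large, so $(u_k)_{k\ge 1}$ is bounded in the Hilbert space $\U$ and hence admits a subsequence $u_{k_j} \weak u^\ast$. By Proposition~\ref{prop:conv_flows}, $\Phi_{u_{k_j}}\to \Phi_{u^\ast}$ uniformly on compact subsets of $\R^n$, so in particular $\Phi_{u_{k_j}}(x_0^j) \to \Phi_{u^\ast}(x_0^j)$ for each of the finitely many training points $j=1,\ldots,M$. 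Continuity of the loss $a$ then gives convergence of the empirical risk term, while weak lower semicontinuity of $\|\cdot\|_{L^2}^2$ handles the regularizer. Together these yield $\F^M(u^\ast)\le \liminf_j \F^M(u_{k_j})=m^M$, so $u^\ast$ is a minimizer.

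For the uniform $L^2$-bound, I would simply compare $\F^M(\tilde u^M)$ with $\F^M(0)$. Since the zero control produces the identity flow ($\Phi_0 = \mathrm{Id}$), one has
\begin{equation*}
\F^M(0) = \frac{1}{M}\sum_{j=1}^M a(x_0^j - \Psi(x_0^j)) = \int_{\R^n} a(x-\Psi(x))\,d\mu_M(x).
\end{equation*}
Under Assumption~\ref{ass:weak_conv} all measures $\mu_M$ are supported in the fixed compact set $K$, so the continuous function $x\mapsto a(x-\Psi(x))$ is bounded on $K$ by some constant $A_{a,\Psi,K}$ independent of $M$. Therefore $\F^M(0)\le A_{a,\Psi,K}$ uniformly in $M$, and since $\tilde u^M$ is a minimizer,
\begin{equation*}
\frac{\beta}{2}\|\tilde u^M\|_{L^2}^2 \le \F^M(\tilde u^M) \le \F^M(0) \le A_{a,\Psi,K}.
\end{equation*}
Setting $C_\beta := \sqrt{2A_{a,\Psi,K}/\beta}$ yields the desired estimate \eqref{eq:up_bound_norm}.

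I do not anticipate a real obstacle here: the only subtlety is ensuring that the empirical term in $\F^M$ is continuous along weakly convergent sequences of controls, which reduces to the already-established $C^0$-convergence of the flows on compacts (Proposition~\ref{prop:conv_flows}), and that the common support of the training measures $\mu_M$ sits inside a fixed compact $K$, which is exactly what Assumption~\ref{ass:weak_conv} guarantees.
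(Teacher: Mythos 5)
Your proposal is correct and follows essentially the same route as the paper: existence via the direct method (coercivity from the $\frac{\beta}{2}\|u\|_{L^2}^2$ term plus weak lower semicontinuity, using Proposition~\ref{prop:conv_flows} for the training-error term), and the uniform bound by comparison with the zero control $v\equiv 0$, exploiting that the measures $\mu_M$ are uniformly supported in the compact set $K$. The only cosmetic difference is that you spell out the minimizing-sequence argument and use continuity of $a$ at the finitely many training points where the paper invokes dominated convergence for the integral against $\mu_M$; these are equivalent here.
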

\begin{proof}
The existence of minimizers
descends from the direct method of Calculus of 
Variations, since  for every $M\geq 1$ the functional
$\F^M$ is coercive and lower semi-continuous
with respect to the weak topology of $\U$.
Indeed, recalling that $a\geq 0$, we have that
\[
\{ u\in \U: \F^M(u)\leq c \} \subset 
\{
u\in \U: \beta ||u ||_{L^2}^2 \leq 2c
\},
\]
and this shows that $\F^M$ is coercive, for every
$M\geq 1$. As regards the lower semi-continuity, 
let us consider a sequence 
$(u_\ell)_{\ell\geq1} \subset \U$ such that
$u_\ell\weak u$ as $\ell\to\infty$. 
Using Proposition~\ref{prop:conv_flows}
and the Dominated Convergence Theorem,
for every $M\geq1$ we have that
\begin{equation*}
\lim_{\ell\to\infty} \int_{\R^n}a(\Phi_{u_\ell}(x)
-\Psi(x))\,d\mu_M(x) =
\int_{\R^n}a(\Phi_{u}(x)
-\Psi(x))\,d\mu_M(x).
\end{equation*}  
Finally, recalling that 
\[
||u||_{L^2}^2 \leq 
\liminf_{\ell\to\infty}||u_\ell||_{L^2}^2,
\]
we deduce that for every $M\geq1$ the functional
$\F^M$ is lower semi-continuous. 

We now prove the uniform estimate on the 
norm of minimizers.
For every $M\geq 1$, let $\tilde u^M$ be a minimizer
of $\F^M$.
If we consider $v\equiv 0$, recalling that 
$\Phi_v = \mathrm{Id}$, we have that
\[
\F^M(\tilde u^M) \leq \F^M(v) 
=\int_{\R^n}a(x-\Psi(x))\,d\mu_M.
\]
Using the fact that $(\mu_M)_{M\geq1}$ are uniformly
compactly supported, we deduce that there exists
$C>0$ such that
\[
\sup_{M\geq1} \int_{\R^n}a(x-\Psi(x))\,d\mu_M
\leq C.
\]
Therefore, recalling that $a\geq0$, we have that
\[
\frac\beta2 ||\tilde u^M||^2_{L^2} \leq C,
\]
for every $M\geq1$. This concludes the proof.
\end{proof}

The previous result suggests as a training strategy
to look for a minimizer of the functional $\F^M$.
In the next section we investigate 
the properties of the functionals $(\F^M)_{M\geq1}$
using the the tools of $\Gamma$-convergence.

\end{section}

\begin{section}{Ensembles growing in size and $\Gamma$-convergence}\label{sec:G-conv}
In this section we study the limiting problem 
when the size of the training dataset tends to
infinity. The main fact is that a $\Gamma$-convergence
result holds.  Roughly speaking, this means that
{\it increasing the size
 of the training dataset does not
make the problem harder, at least from a theoretical
viewpoint}.
For a thorough introduction to the subject, 
the reader is referred to \cite{D93}.

For every $M\geq 1$,
let $\alpha^M(\cdot)\in \ensembM(\R^n)$ be an
ensemble of points in the compact set
$K\subset\R^n$, and let us consider the 
discrete probability measure $\mu_M$ defined 
in \eqref{eq:discr_meas}.
For every $M\geq1$ we consider the functional
$\F^M:\U\to\R$ defined as follows:
\begin{equation} \label{eq:fun_M}
\F^M(u) := \int_{\R^n}a(\Phi_u(x)-\Psi(x))\,d\mu_M + \frac{\beta}{2}||u||^2_{L^2}.
\end{equation}
The tools of $\Gamma$-convergence requires
the domain where the functionals are defined
to be equipped with a metrizable topology.
Recalling that the weak topology of $L^2$
is metrizable only on bounded sets, 
we need to properly restrict the functionals.
For every $\rho>0$, we set
\[
\U_\rho := \{ u\in\U: ||u||_{L^2}\leq \rho \}.
\]
Using Proposition~\ref{prop:bound_norm} we can choose
$\rho=C_\beta$, so that
\[
\arg \min_{\U} \F^M = \arg \min_{\U_\rho} \F^M,
\]
for every $M\geq 1$. With this choice we restrict 
the minimization problem to a bounded subset of $\U$, 
without losing any minimizer. We define 
$\F^M_\rho:=\F^M|_{\U_\rho}$.
We recall the definition of $\Gamma$-convergence.
\begin{defn*}
The family of functionals
$(\F^M_\rho)_{M\geq 1}$ is said to
 $\Gamma$-converge
to a functional 
$\F_\rho:\U_\rho\to\R\cup\{ +\infty \}$
with respect to the weak topology of $\U$ as 
$M\to \infty$ if the following conditions hold:
\begin{itemize}
\item for every 
$(u^M)_{M\in\R_+}\subset \U_\rho$ such that
$u^M \weak u$ as $M \to \infty$ we have
\begin{equation}\label{eq:Gamma_liminf}
\liminf_{M\to +\infty} \F^M_\rho(u^M) 
\geq \F_\rho(u);
\end{equation}
\item for every $u\in \U$ there exists a sequence
$(u^M)_{M\in\R_+}\subset \U_\rho$ such that
$u^M \weak u$ as $M\to \infty$ and such that
\begin{equation}\label{eq:Gamma_limsup}
\limsup_{M\to +\infty} \F^M_\rho(u^M) \leq \F_\rho(u).
\end{equation}
\end{itemize}
If \eqref{eq:Gamma_liminf} and \eqref{eq:Gamma_limsup}
are satisfied, then we write 
$\F^M_\rho\to_\Gamma \F_\rho$ as $M \to \infty$.
\end{defn*}
Let us define the functional $\F:\U\to\R$
as follows:
\begin{equation}\label{eq:G_limit}
\F(u) := \int_{\R^n} a(\Phi_u(x)-\Psi(x))\, d\mu
+\frac\beta2 ||u||_{L^2}^2,
\end{equation} 
where the probability measure $\mu$ is the weak limit
of the sequence $(\mu_M)_{M\geq1}$.
Using the same argument as in the proof of 
Proposition~\ref{prop:bound_norm}, we can prove that
$\F$ attains minimum and that
\[
\arg \min_{\U}\F = \arg \min_{\U_\rho}\F,
\]
with $\rho=C_\beta$. As before, we define
$\F_\rho:=\F|_{\U_\rho}$.

\begin{theorem}\label{thm:G_conv}
Given $\rho>0$, let us consider $\F^M_\rho:\U_\rho
\to \R$ with $M\geq1$. Let $\F_\rho:\U_\rho\to\R$
be the restriction to $\U_\rho$ of the functional 
defined in \eqref{eq:G_limit}. 
If Assumption~\ref{ass:weak_conv} holds, then the 
functionals $(\F^M_\rho)_{M\geq1}$ $\Gamma$-converge 
to $\F_\rho$ as $M\to\infty$ with respect to the weak 
topology of $\U$.
\end{theorem}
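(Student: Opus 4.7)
The plan is to verify the two standard $\Gamma$-convergence inequalities separately. The only nontrivial ingredient beyond definitions is that we must pass to the limit in integrals where \emph{both} the integrand (depending on the control $u^M$) and the measure $\mu_M$ vary simultaneously. The key observation is that, on the bounded set $\U_\rho$, Proposition~\ref{prop:conv_flows} provides uniform convergence of the flows on compact sets, which upgrades the weak convergence $u^M \weak u$ to uniform convergence of the integrands $x\mapsto a(\Phi_{u^M}(x)-\Psi(x))$ on $K$. Combined with weak convergence of measures, this gives convergence of the integrals, not just a liminf inequality.

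For the $\Gamma$-limsup inequality \eqref{eq:Gamma_limsup}, I would use the constant recovery sequence $u^M \equiv u$ for every $u\in \U_\rho$. Then $u^M \weak u$ trivially, the regularizing term $\frac{\beta}{2}\|u^M\|_{L^2}^2$ is constant in $M$, and the integrand $f(x):=a(\Phi_u(x)-\Psi(x))$ is continuous and bounded on $K$. By Assumption~\ref{ass:weak_conv} we have $\mu_M \weak \mu$ with all measures supported on the compact $K$, so $\int_{\R^n} f \,d\mu_M \to \int_{\R^n} f\,d\mu$, which yields $\F^M_\rho(u^M)\to \F_\rho(u)$.

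For the $\Gamma$-liminf inequality \eqref{eq:Gamma_liminf}, let $u^M \weak u$ in $\U_\rho$. The regularization term is handled by weak lower semi-continuity of the $L^2$-norm squared, giving $\liminf_{M\to\infty}\frac{\beta}{2}\|u^M\|_{L^2}^2 \geq \frac{\beta}{2}\|u\|_{L^2}^2$. For the integral term, set $f_M(x):=a(\Phi_{u^M}(x)-\Psi(x))$ and $f(x):=a(\Phi_u(x)-\Psi(x))$. By Proposition~\ref{prop:conv_flows}, $\Phi_{u^M}\to \Phi_u$ uniformly on the compact set $K$; since $a$ is continuous and the range $\Phi_{u^M}(K)\cup\Psi(K)$ stays in a fixed compact set (by boundedness of $u^M$ in $\U_\rho$ and Assumption~\ref{ass:bound_fields}), we deduce $\|f_M - f\|_{C^0(K)}\to 0$. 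Splitting
\[
\left| \int_{\R^n} f_M\,d\mu_M - \int_{\R^n} f\,d\mu \right|
\leq \|f_M-f\|_{C^0(K)}\,\mu_M(K) + \left|\int_{\R^n} f\,d\mu_M - \int_{\R^n} f\,d\mu \right|,
\]
the first summand vanishes by uniform convergence and $\mu_M(K)=1$, and the second by weak convergence of $(\mu_M)_{M\geq1}$ to $\mu$ applied to the bounded continuous function $f$. Hence the integral term actually converges, which is stronger than what is required for the liminf.

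Combining the two estimates yields $\liminf_{M\to\infty}\F^M_\rho(u^M)\geq \F_\rho(u)$, completing the proof. I do not foresee any serious obstacle: the potential subtlety of mixing weak control convergence with weak measure convergence is absorbed by the crucial upgrade from weak convergence of $u^M$ to \emph{uniform} convergence of the associated flows on compact sets provided by Proposition~\ref{prop:conv_flows}, which is precisely tailored for this kind of passage to the limit.
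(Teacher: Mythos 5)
Your proposal is correct and follows essentially the same route as the paper: a constant recovery sequence for the limsup inequality, and for the liminf inequality the upgrade from $u^M\weak u$ to uniform convergence of the flows on $K$ via Proposition~\ref{prop:conv_flows}, combined with (uniform) continuity of $a$ on a fixed compact range, weak convergence of $(\mu_M)_{M\geq1}$, and weak lower semi-continuity of the $L^2$-norm. Your explicit splitting of the integral difference is exactly the step the paper leaves implicit, so there is nothing to add.
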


\begin{proof}
Let us prove the $\limsup$-condition 
\eqref{eq:Gamma_limsup}. Let us fix $u\in\U_\rho$
and, for every $M\geq1$, let us define 
$u^M:= u$. Then, recalling that the measures
$(\mu_M)_{M\geq1}$ are weakly convergent to $\mu$,
we have that
\[
\F_\rho(u)
= \lim_{M\to\infty}
\int_{\R^n} a(\Phi_u(x)-\Psi(x))\, d\mu_M
+\frac\beta2 ||u||_{L^2}^2
= \lim_{M\to\infty} \F^M_\rho(u).
\]
This proves \eqref{eq:Gamma_limsup}.

We now prove the $\liminf$-condition. Let 
$(u^M)_{M\geq1}\subset \U$ be weakly convergent
to $u\in\U$. 
Using Proposition~\ref{prop:conv_flows} we have that
$\Phi_{u^M}|_K\to \Phi_u|_K$ with respect to the
$C^0$ topology. 
Moreover, there exists a compact set $K'\subset \R^n$
such that
\[ (\Phi_{u}-\Psi)(K) \cup
\bigcup_{M\geq1}(\Phi_{u^M}-\Psi)(K)\subset K'.
\]
Using the fact that $a$ is uniformly continuous on
$K'$, we  deduce that
\begin{equation} \label{eq:conv_integrand}
\lim_{M\to\infty}
\sup_{x\in K}|a(\Phi_{u^M}(x)-\Psi(x) )-
a(\Phi_{u}(x)-\Psi(x) )|_2= 0.
\end{equation} 
Using \eqref{eq:conv_integrand} and the
weak convergence
of $(\mu_M)_{M\geq1}$ to $\mu$, we deduce that
\[
\lim_{M\to\infty} \int_{\R^n}
a(\Phi_{u^M}(x)-\Psi(x) ) \, d\mu_M
= \int_{\R^n}
a(\Phi_{u}(x)-\Psi(x) ) \, d\mu. 
\]
Recalling that the $L^2$ norm is lower semi-continuous 
with respect to the weak convergence,
we have that condition \eqref{eq:Gamma_liminf}
is satisfied. 
\end{proof}

\begin{remark} \label{rem:fund_Gamma}
Using the equi-coercivity of the functionals
$(\F^M_\rho)_{M\geq1}$ and 
\cite[Corollary~7.20]{D93}, we deduce that
\begin{equation} \label{eq:conv_minim}
\lim_{M\to \infty} \min_{\U_\rho} \F^M_\rho =
 \min_{\U_\rho} \F_\rho,
\end{equation}
and that any cluster point $\tilde u$ of 
a sequence of  minimizers
$(\tilde u^M)_{M\geq1}$ is a minimizer of $\F_\rho$.
Let us assume that a sub-sequence
$\tilde u^{M_j} \weak \tilde u$ as $j\to\infty$.
Using Proposition~\ref{prop:conv_flows} and
the Dominated Convergence Theorem we deduce that
\begin{equation}\label{eq:train_to_test}
\lim_{j\to\infty}
\int_K a(\Phi_{\tilde u^{M_j}}(x)-\Psi(x))\,d\mu_M(x)
=\int_K a(\Phi_{\tilde u}(x)-\Psi(x))\,d\mu(x),
\end{equation}
where we stress that $\tilde u$
 is a minimizer of $\F_\rho$.
Combining
\eqref{eq:conv_minim} and \eqref{eq:train_to_test}
we obtain that
\[
\lim_{j\to\infty}\frac\beta2 ||\tilde u^{M_j}||_{L^2}^2=
\frac\beta2 ||\tilde u||_{L^2}^2.
\]
Since $\tilde u^{M_j}\weak \tilde u$ as $j\to\infty$,
the previous 
equation implies that 
the subsequence $(\tilde u^{M_j})_{j\geq1}$
converges to $\tilde u$ also
with respect to the strong topology of $L^2$.
This argument shows that any sequence of
minimizers  $(\tilde u^M)_{M\geq1}$ is pre-compact
with respect to the strong topology of $L^2$.
\end{remark}

We can establish an asymptotic upper bound for the
mean training error. Let us define
\begin{equation}\label{eq:upp_train}
\kappa_\beta := \sup \left\{
\int_K a(\Phi_{\tilde u}(x)-\Psi(x))\,d\mu(x)
\bigg| \tilde u \in \arg  \min_{\U} \F
\right\}.
\end{equation}
As suggested by the notation, the value of 
$\kappa_\beta$ highly depends on the 
positive parameter $\beta$ that tunes the
$L^2$-regularization.
Given a sequence of minimizers 
$(\tilde u^M)_{M\geq 1}$ of the functionals 
$(\F^M)_{M\geq 1}$, from \eqref{eq:train_to_test} 
we deduce that
\begin{equation} \label{eq:asymp_up_train}
\limsup_{M\to\infty} \int_K a(\Phi_{\tilde u^M}(x)-
\Psi(x))\,d\mu_M(x) \leq \kappa_\beta.
\end{equation}
In the next result we show that
under the hypotheses of 
Theorem~\ref{thm:univ_app_flows}
 $\kappa_\beta$
can be made arbitrarily small with a proper choice
of $\beta$.

\begin{proposition} \label{prop:param_beta}
Let $\kappa_\beta$ be defined as in 
\eqref{eq:upp_train}. If the vector fields
$F_1,\ldots,F_l$ that define control system
\eqref{eq:subr_sys_notat} satisfy 
Assumption~\ref{ass:bound_fields}
and~\ref{ass:str_Lie}, then
\begin{equation} \label{eq:beta_to_0}
\lim_{\beta \to 0^+} \kappa_\beta =0.
\end{equation}
\end{proposition}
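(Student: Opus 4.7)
The plan is to exploit the Universal Approximation property (Theorem~\ref{thm:univ_app_flows}) as a pointwise comparison benchmark for the minimizers of $\F$, and then play off $\beta$ against the approximation scale. The essential observation is that even though the $L^2$-norm of the approximating controls may blow up as the approximation error tends to zero, for each fixed approximation scale the norm is finite, which allows $\beta$ to be driven down first.

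First I would fix $\e>0$ and invoke Theorem~\ref{thm:univ_app_flows} to obtain a control $u_\e \in \U$ such that $\sup_{x \in K} |\Phi_{u_\e}(x) - \Psi(x)|_2 \leq \e$. Since $a$ is continuous and $a(0) = 0$, on the compact set $\{y \in \R^n : |y|_2 \leq \e\}$ we can bound $a$ by a modulus $\omega_a(\e)$ with $\omega_a(\e) \to 0$ as $\e \to 0$. Because $\mu$ is a probability measure supported in $K$, this yields
\begin{equation*}
\int_K a(\Phi_{u_\e}(x) - \Psi(x))\, d\mu(x) \leq \omega_a(\e).
\end{equation*}

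Next, for any minimizer $\tilde u \in \arg\min_{\U} \F$ (with the given $\beta > 0$), the defining minimality inequality $\F(\tilde u) \leq \F(u_\e)$ gives
\begin{equation*}
\int_K a(\Phi_{\tilde u}(x) - \Psi(x))\, d\mu(x) \leq \F(\tilde u) \leq \F(u_\e) \leq \omega_a(\e) + \frac{\beta}{2} \|u_\e\|_{L^2}^2,
\end{equation*}
where I dropped the non-negative regularization term from the left-hand side. Taking the supremum over $\tilde u \in \arg\min_\U \F$ yields $\kappa_\beta \leq \omega_a(\e) + \tfrac{\beta}{2}\|u_\e\|_{L^2}^2$. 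Since $\|u_\e\|_{L^2}$ depends only on $\e$, fixing $\e$ and sending $\beta \to 0^+$ gives $\limsup_{\beta \to 0^+} \kappa_\beta \leq \omega_a(\e)$. Since $\e$ was arbitrary and $\omega_a(\e) \to 0$, combined with $\kappa_\beta \geq 0$, this proves $\lim_{\beta \to 0^+} \kappa_\beta = 0$.

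The argument has no real obstacle, provided one remarks at the outset that $a$ admits a modulus of continuity at the origin (which follows from continuity of $a$ on a neighborhood of $0$, together with $a(0)=0$). The only subtle point is the order of quantifiers: one must choose the competitor $u_\e$ \emph{before} letting $\beta \to 0$, since otherwise the unbounded growth of $\|u_\e\|_{L^2}$ as $\e \to 0$ (cf.\ Proposition~\ref{prop:unbound_app}, which shows this is unavoidable when $\Psi$ is not itself a flow) would destroy the estimate.
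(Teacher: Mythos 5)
Your proposal is correct and follows essentially the same route as the paper: invoke Theorem~\ref{thm:univ_app_flows} to produce a competitor control whose flow approximates $\Psi$ uniformly on $K$ well enough that the integral term is at most $\e$ (the paper chooses $\rho$ with $\sup_{B_\rho(0)}a\leq\e$ first, which is equivalent to your modulus-of-continuity step), then compare $\F(\tilde u)\leq\F(\hat u)$ and send $\beta\to0^+$ with the competitor fixed. The quantifier order you highlight is exactly how the paper handles it, via the explicit threshold $\hat\beta=2\e/\|\hat u\|_{L^2}^2$.
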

 
\begin{proof}
Let us fix $\e>0$. Since $a(0)=0$, there exists
$\rho>0$ such that 
\[
\sup_{B_\rho (0)} a \leq \e.
\]
Using Theorem~\ref{thm:univ_app_flows}, we deduce
that there exists a control $\hat u \in \U$
such that
\begin{equation} \label{eq:unif_app_proof}
\sup_{x\in K}|\Phi_{\hat u}(x)-\Psi(x)|_2 < \rho.
\end{equation}
This implies that
\[
\int_K a(\Phi_{\hat u}(x)-\Psi(x))\,d\mu(x) \leq \e.
\]
Let us set 
$\hat \beta:=\frac{2\e}{||\hat u||_{L^2}^2}$.
For any $\beta\leq \hat \beta$, let 
$\F$ be the functional defined in \eqref{eq:G_limit}
with tuning parameter $\beta$, and let $\tilde u
\in \U$ be a minimizer of $\F$. Then we have
\[
\F(\tilde u) \leq \F(\hat u) \leq 
\e + \frac{\beta}{2}||\hat u||_{L^2}^2\leq 2\e,
\] 
and this concludes the proof.
\end{proof} 
 
\subsection{An estimate of the generalization error}
We now discuss an estimate
of the expected generalization error
based on the observation of the mean training error,
similar to the one established in
\cite{MTG21} for the control system 
\eqref{eq:ctrl_sys_ResNet}.
A similar estimate was obtained also in 
\cite{BCFH21}.
Assumption~\ref{ass:weak_conv} implies that
the Wasserstein distance $W_1(\mu_M,\mu)\to0$
as $M\to\infty$ (for details, see
\cite[Proposition~7.1.5]{AGS}).
We recall that, if $\nu_1,\nu_2 \in 
\mathcal{P}(K)$ are Borel
probability measures on $K$, then
\[
W_1(\nu_1,\nu_2):= \inf_{\pi\in \mathcal{P}(K\times K)}
\left\{ \int_{K\times K}\!\!\!\!\!\!|x-y|_2\,d\pi(x,y) 
\Big| \pi(\cdot,K)=\nu_1,
\pi(K,\cdot) = \nu_2 
\right\}.
\]
For every $M\geq1$ let us introduce
$\pi_M\in \mathcal{P}(K\times K)$
such that $\pi_M(\cdot,K)=\mu_M$
and  $\pi_M(K,\cdot)=\mu$, and
\[
W_1(\mu_M,\mu)=
\int_{K\times K}\!\!\!\!\!\!|x-y|_2\,d\pi_M(x,y) .
\]
{Let us consider an admissible 
control $u\in\U$, and let $\Phi_u:\R^n\to\R^n$ be 
the corresponding flow. }
If the testing samples are generated using the 
probability distribution $\mu$, then the expected
generalization error {that we commit
by approximating $\Psi:\R^n\to\R^n$ with 
$\Phi_u$ is
\begin{equation*}
\mathbb{E}_\mu [a(\Phi_{ u}(\cdot)-
\Psi(\cdot))]
= \int_K a(\Phi_{u}(y)-\Psi(y)) \,d\mu(y).
\end{equation*} 
On the other hand,  we recall that
the corresponding training error
is expressed by
\begin{equation*}
\int_K a(\Phi_u(x)-\Psi(x))\, d\mu_M(x).
\end{equation*} 
Hence we can compute
\begin{align*}
\bigg| \mathbb{E}_\mu [a(\Phi_{u}(\cdot)-
\Psi(\cdot))] &- \int_Ka(\Phi_{u}(x)-\Psi(x)) \,d
\mu_M(x) \bigg|  \\
&\leq   \int_{K\times K}\!\!\!\!\!\!
|a(\Phi_{u}(y)-\Psi(y))-
a(\Phi_{u}(x)-\Psi(x)) |\,d\pi_M (x,y) \\
& \leq L_a \int_{K\times K} \!\!\!\!\!\!
 |\Psi(y)-\Psi(x)| +
|\Phi_{u}(x)-\Phi_{u}(y)|
\,d\pi_M(x,y).
\end{align*} 
Then for every $M\geq1$ we have
\begin{equation*}
\bigg| \mathbb{E}_\mu [a(\Phi_{u}(\cdot)-
\Psi(\cdot))]
 - \int_Ka(\Phi_{u}(x)-\Psi(x)) \,d
\mu_M(x) \bigg| 
\leq L_a(L_\Psi + L_{\Phi_{u}})W_1(\mu_M,\mu),
\end{equation*}
where $L_\Psi$, $L_{\Phi_{u}}$ and $L_a$
are the Lipschitz
constant, respectively, of $\Psi$, $\Phi_{u}$ 
and $a$}.
{The last inequality finally yields
\begin{equation}\label{eq:est_gen_err_univers}
\mathbb{E}_\mu [a(\Phi_{u}(\cdot)-
\Psi(\cdot))] \leq
\int_Ka(\Phi_{u}(x)-\Psi(x)) \,d
\mu_M(x) +
L_a(L_\Psi + L_{\Phi_{u}})W_1(\mu_M,\mu)
\end{equation}
for every $M\geq 1$.} 

\begin{remark} \label{rmk:a_priori_est}
We observe that the estimate 
\eqref{eq:est_gen_err_univers} does not 
involve any testing dataset. In other words,
in principle we can use 
\eqref{eq:est_gen_err_univers} to provide an upper
bound to the expected generalization
error, without the need of computing the 
mismatch between $\Psi$ and $\Phi_u$ on a 
testing dataset.
In practice, while we can directly measure 
the first quantity at the right-hand side of
\eqref{eq:est_gen_err_univers}, the second term
could be challenging to estimate.
Indeed, if on one hand we can easily approximate
the quantity $L_{\Phi_u}$ (for instance by means of
\eqref{eq:lipsc_flow}), on the other hand
we may have no access to the distance 
$W_1(\mu_M,\mu)$. This is actually the case when the
measure $\mu$ used to sample the training dataset
is unknown.
\end{remark}

{In the case we consider the flow 
$\Phi_{\hat u^M}$ corresponding to a minimizer
$\hat u^M$ of the functional $\F^M$ we can 
further specify \eqref{eq:est_gen_err_univers}.}
Indeed, combining
 Proposition~\ref{prop:bound_norm} and
Lemma~\ref{lem:lipsch_flows}, we deduce that
$L_{\Phi_{\tilde u^M}}$ is uniformly bounded
with respect to $M$ by a constant $L_\beta$.
Provided that  $M$ is large enough,
 from \eqref{eq:est_gen_err_univers} 
and  \eqref{eq:asymp_up_train}
we obtain that
\begin{equation} \label{eq:est_gen_err}
\mathbb{E}_\mu [a(\Phi_{\tilde u^M}(\cdot)-
\Psi(\cdot))] \leq 2\kappa_\beta + 
L_a(L_\Psi + L_\beta)W_1(\mu_M,\mu).
\end{equation}
The previous inequality shows how we can achieve
an arbitrarily small expected generalization error,
provided that the vector fields $F_1,\ldots,F_l$
of control system \eqref{eq:subr_sys_notat}
satisfy Assumption~\ref{ass:str_Lie},
and provided that the size of the training
dataset could be chosen arbitrarily large.
First, using Proposition~\ref{prop:param_beta}
 we set the tuning parameter $\beta$ such 
that the quantity $\kappa_\beta$ is as small
as desired. Then, we consider a training dataset
large enough to guarantee that the second term
at the right-hand side of \eqref{eq:est_gen_err}
is of the same order of $\kappa_\beta$.

\begin{remark} \label{rmk:beta_eps_choice}
Given $\e>0$,
Proposition~\ref{prop:param_beta}  
guarantees the existence of 
$\hat \beta>0$ such that $\kappa_\beta\leq\e$
if $\beta\leq \hat\beta$. The expression
of $\hat\beta$ obtained in the proof of
Proposition~\ref{prop:param_beta} is given in  
terms of the norm of a control $\hat u\in \U$
such that \eqref{eq:unif_app_proof} holds.
In \cite{AS1},
where Theorem~\ref{thm:univ_app_flows}
is proved, it is explained the construction of
an admissible control  whose flow approximate
the target diffeomorphism with assigned precision.
However the control produced with this procedure
is, in general, far from having minimal $L^2$-norm,
and as a matter of fact the corresponding 
$\hat{\beta}$ might be smaller than necessary.
Unfortunately, at the moment, we can not provide a 
more practical rule for the computation of 
$\hat \beta$.
\end{remark}

\begin{remark} \label{rmk:a_priori_beta}
As observed in Remark~\ref{rmk:a_priori_est}
 for \eqref{eq:est_gen_err_univers},
the estimate
\eqref{eq:est_gen_err} of the expected generalization
error holds as well
 \textit{a priori} with respect to
the choice of a testing dataset.
Moreover, if the size $M$ of the training dataset
is assigned and it cannot be enlarged,
in principle one could choose the
regularization parameter $\beta$ by minimizing
the right-hand side of \eqref{eq:est_gen_err}.
However, in practice this may be very complicated,
since we have no direct access to the function
$\beta\mapsto \kappa_\beta$.
\end{remark}
 
\end{section}

\begin{section}{Training the Network:
Projected Gradient Flow} \label{sec:proj_grad}
In this section we propose a training algorithm
based on a gradient flow formulation.
We recall that we want to express the approximation
of a diffeomorphism $\Psi:\R^n\to\R^n$
diffeotopic to the identity through a
proper composition $\Phi=\Phi_N \circ\ldots\circ
\Phi_1$, where, for every $k=1,\ldots,N$ the
functions $\Phi_k$ are of the form 
\begin{equation} \label{eq:fun_layer}
\Phi_k(x) = x + h\sum_{i=1}^lF_i(x)u_{i,k},
\quad h=\frac1N,
\end{equation}
and play the role of inner layers of the ResNet.
We recall that the building blocks
\eqref{eq:fun_layer} are obtained by discretizing
the linear-control system \eqref{eq:subr_sys_notat}
with the explicit Euler scheme on the evolution
interval $[0,1]$, using a constant time-step
$h=\frac1N$.
In Section~\ref{sec:def_strat} and 
Section~\ref{sec:G-conv} we observed that
the minimization of the non-linear functional 
$\F^M:\U\to\R$ defined
in \eqref{eq:fun_fin} provides a possible
strategy for approximating $\Psi$ using a flow
$\Phi_u$ generated by control system 
\eqref{eq:subr_sys_notat}. The idea that underlies 
this section consists in projecting the gradient
flow induced in $\U$ by $\F^M$ onto a finite
dimensional subspace $\U_N\subset\U$. Using this
gradient flow, we obtain a procedure to learn
the parameters 
$(u_{i,k})_{k=1\ldots,N}^{i=1,\ldots,l}$
that define the inner layers.

In \cite{S21} it has been derived a gradient flow
equation for optimal control problems with
end-point cost, and a convergence result
was established.
In the particular case of the functional $\F^M$
defined in \eqref{eq:fun_fin} and associated
to control system \eqref{eq:subr_sys_notat},
from \cite[Remark~3.5]{S21}
 it follows that the 
differential 
$d_u\F^M=(\partial_{u_1}\F^M
,\ldots,\partial_{u_l}\F^M)$
can be represented as an element of $\U$ as follows:
\begin{equation}\label{eq:diff_fun}
\frac{\partial}{\partial u_i}\F^M(u)(s) = 
\sum_{j=1}^M\langle
\lambda_u^{j}(s),F_i(x_u^{j}(s)) \rangle
+ \beta u_i(s), \quad i=1,\ldots,l.
\end{equation}
For every $j=1,\ldots,M$ and a.e.
$s\in[0,1]$, the functions
$x_u^{j}:[0,1]\to\R^n$ and 
$\lambda_u^{j}:[0,1]\to\R^n$
satisfy
\begin{equation} \label{eq:ode_traj}
\begin{cases}
\dot x_u^{j}(s)= \sum_{i=1}^lF_i(x_u^{j}(s))u_i(s), \\
x^{j}_u(0)= x^{j}_0,
\end{cases}
\end{equation}
and
\begin{equation}
\begin{cases} \label{eq:ode_covect}
\dot \lambda_u^{j}(s)= 
-\lambda_u^{j}(s)
\left(\sum_{i=1}^l u_i(s)
 D_xF_i(x^{j}(s))
\right), \\
\lambda_u^{j}(1)
= \frac{1}{M}\nabla a(x_u^{j}(1)-\Psi(x^{j}_0)).
\end{cases}
\end{equation}
A remarkable aspect that can be used in the numerical
implementation is that the solutions of 
\eqref{eq:ode_traj} and \eqref{eq:ode_covect} can be
computed in parallel with respect to the elements
of the training ensemble,
i.e., in other words, with respect to
the index $j$.
In this section we propose a Finite-Elements approach
to derive a method for training the ResNet 
\eqref{eq:fun_layer}.
The idea is to replace the infinite-dimensional space
$\U$ with a proper finite-dimensional subspace
$\U_N$. A natural choice could be to consider 
piecewise constant functions, i.e.,
\begin{equation}
u\in\U_N \iff \exists c_1,\ldots,c_N \in \R^l: 
u(s) =
\begin{cases}
c_1 & s\in[0,\frac1N],\\
\ldots\\
c_N & s\in (\frac{N-1}{N},1].
\end{cases}
\end{equation}
The orthogonal projection onto $\U_N$,
$P_N:\U\to\U_N$ is given by
\begin{equation} \label{eq:proj_expr}
P_N(v)(s) =
\begin{cases}
N\int_{[0,\frac1N]}v(\tau)\,d\tau & s\in[0,\frac1N],\\
\ldots\\
N\int_{(\frac{N-1}{N},1]}v(\tau)\,d\tau & s\in (\frac{N-1}{N},1].
\end{cases}  
\end{equation}
Hence, the gradient flow of the functional $\F^M$
\[
\partial_t u = -d\F^M(u)
\]
can be projected onto the finite-dimensional space
$\U_N$:
\begin{equation} \label{eq:proj_grad_flow}
\partial_t u = -P_N(d\F^M(u)).
\end{equation}
With a discretization-in-time of 
\eqref{eq:proj_grad_flow}, we obtain the following
iterative implementable method for the 
approximate minimization of $\F^M$:
\begin{enumerate}
\item[(i)] Consider $u\in \U_N$, and, for every
$j=1,\ldots,M$, compute an approximation of 
$x_u^{j}, \lambda_u^{j}$ at the nodes
$\{ 0,\frac1N,\ldots,1 \}\subset[0,1]$ using a proper
numerical scheme for ODE;
\item[(ii)] Using the approximations of 
$\{x_u^{j}, \lambda_u^{j}\}_{j=1,\ldots,M}$
obtained at step (i), compute the approximation
of $d\F^M(u)$ at the nodes using \eqref{eq:diff_fun};
\item[(iii)] Compute $\Delta u$, i.e.,
the approximation of 
$P_N(d\F^M(u))$ using \eqref{eq:proj_expr}, 
the approximation of $d\F^M(u)$ obtained at step (ii)
and a proper numerical integration scheme;
\item[(iv)] Update the control $u$ using an explicit
Euler discretization of \eqref{eq:proj_grad_flow}
with a proper step-size $\gamma>0$:
\[
u = u - \gamma\Delta u.
\]
Finally go to step (i).
\end{enumerate}
The step-size $\gamma>0$ can be chosen using
backtracking line search.
In Algorithm~\ref{alg:proj_grad_flow}
we present the implementation that we used for
the numerical simulations.
We use the following notations:
\begin{itemize}
\item $u=(u_{i,k})^{i=1,\ldots,l}_{ k=1,\ldots,N}$ 
denotes an element of $\U_N$. Namely,
$u_{i,k}$ represents the value of the control 
corresponding to the vector field $F_i$ in the
time interval 
$\left[\frac{k-1}{N},\frac{k}{N}\right)$.
The same notations are used for 
$\Delta u=(\Delta u_{i,k})^{i=1,\ldots,l}_{ k=1,
\ldots,N}$, which represents the approximation
of $P_N(d\F^M(u))$. 

\item For every ${j=1,\ldots,M}$, 
$x^j=(x^j_k)_{ k=0,\ldots,N}$
denotes the approximated solution of 
\eqref{eq:ode_traj} with Cauchy datum 
$x^j(0) =x^j_0$. Namely, $x^j_k$ represents the
approximation at the node 
$t=\frac{k}{N}$. 
We use $x=(x^j)_{j=1,\ldots,M}$ to 
denote the array containing the approximations
of all trajectories at all nodes.
Moreover, we observe that 
we have $x^j_k = \Phi_k\circ\ldots\circ\Phi_1
(x^j_0)$ for every $k=1,\ldots,N$ and
for every $j=1,\ldots,M$, where 
$\Phi_1,\ldots,\Phi_N$ are the maps introduced in
\eqref{eq:fun_layer}.

\item For every ${j=1,\ldots,M}$, 
$\lambda^j=(\lambda^j_k)_{ k=0,\ldots,N}$
denotes the approximated solution of 
\eqref{eq:ode_covect} with Cauchy datum 
$\lambda^j(1) = \frac1M \nabla a (x^j_N - \Psi(x^j_0))$.
Namely, $\lambda^j_k$ represents the
approximation at the node $t=\frac{k}{N}$.
We use $\lambda=(\lambda^j)_{j=1,\ldots,M}$ to 
denote the array containing the approximations
of all co-vectors at all nodes.

\end{itemize}

\begin{algorithm}
\KwData{ $(x_0^j)_{j=1,\ldots,M}\subset \R^n$ training 
dataset, $(F_i)_{i=1,\ldots,l}$  vector fields.\\ 
Set the parameters: $n_{\mathrm{layers}}\geq 1$, $\tau\in (0,1)$, $c \in (0,1)$, $
\gamma >0$, $\max_{\mathrm{iter}}\geq 1$.}
$N\gets n_{\mathrm{layers}}$\;
$h\gets\frac{1}{N}$\;
$u \in \U_N$\;
\For(\tcp*[f]{Forward solution of \eqref{eq:ode_traj}}){$j=1,\ldots,M$ }{
	\For{$k=1,\ldots,N$}{
		$x^j_{k}\gets x^j_{k-1} + h
			\sum_{i=1}^l F_i(x_{k-1}^j) u_{i,k}$}
	}

$\mathrm{Cost}\gets \frac1M\sum_{j=1}^M
a(x^j_N - \Psi(x^j_0)) + \frac\beta2 ||u||_{L^2}^2$\;
$\mathrm{flag}\gets 1$\;
\For(\tcp*[f]{Iterations of Projected Gradient Flow}){$r=1,\ldots,\max_{\mathrm{iter}}$ }{
	\If(\tcp*[f]{Solve \eqref{eq:ode_covect} only if necessary}){$\mathrm{flag}=1$ }{
		\For(\tcp*[f]{Backward solution
of \eqref{eq:ode_covect} }){$j=1,\ldots,M$ }{
			$\lambda^j_N\gets \frac{1}{M}\nabla a(x^j_N 
- \Psi(x^j_0))$\;
			\For{$k=N,\ldots,1$}{
				$\lambda^j_{k-1} \gets 
(\mathrm{Id}- h \sum_{i=1}^l D_xF_i(x_{k-1}^j) 
u_{i,k})^{-T}\lambda_k^j$\;
				}			
			}
		}
	\For(\tcp*[f]{Compute $P_N(d\F^M)$ using \eqref{eq:proj_expr}}){$k=1,\ldots,N$, $i=1,\ldots,l$ 
	}{
		$\Delta u_{i,k} \gets  \sum_{j=1}^M
(\frac12\langle \lambda_{k-1}^j, F_i(x_{k-1}^j) \rangle
+ \frac12\langle \lambda_{k}^j, F_i(x_{k}^j) 
\rangle) + \beta u_{i,k}$\;
		$u_{i,k}^{\mathrm{new}}\gets u_{i,k} - \gamma \Delta u_{i,k}$\;
		}
		\For(\tcp*[f]{Forward solution 
		of \eqref{eq:ode_traj}}){$j=1,\ldots,M$ }{
			$x^{j,\mathrm{new}}_0\gets x^j_0$\;			
			\For{$k=1,\ldots,N$}{
				$x^{j,\mathrm{new}}_{k}\gets 
x^{j,\mathrm{new}}_{k-1} + h
\sum_{i=1}^l F_i(x_{k-1}^{j,\mathrm{new}}) u_{i,k}^{\mathrm{new}}$\;
				}			
			}
	$\mathrm{Cost^{new}}
\gets \frac1M\sum_{j=1}^M
a(x^{j,\mathrm{new}}_N - \Psi(x^j_0)) + \frac\beta2 ||u^{\mathrm{new}}||_{L^2}^2$\;	
	\eIf(\tcp*[f]{Backtracking for $\gamma$}){$\mathrm{Cost}\geq \mathrm{Cost^{new}}
+ c\gamma || \Delta u ||_{L^2}^2$ }{
		$u\gets u^{\mathrm{new}}$,
		$x\gets x^{\mathrm{new}}$\;
		$\mathrm{Cost}\gets \mathrm{Cost^{new}}$\;
		$\mathrm{flag} \gets 1$\;
		}
		{
		$\gamma \gets \tau \gamma$\;
		$\mathrm{flag} \gets 0$\;
		}
	}
\caption{Training with Projected Gradient Flow}
\label{alg:proj_grad_flow}
\end{algorithm}

\begin{remark}
We briefly comment 
Algorithm~\ref{alg:proj_grad_flow}.
Both \eqref{eq:ode_traj} and \eqref{eq:ode_covect} are
discretized using the explicit Euler method. However,
since \eqref{eq:ode_covect} is solved backward in 
time, the update rule
\begin{equation*} 
\frac{\lambda^j_{k} - \lambda^j_{k-1}}{h}
= -\lambda_{k-1}^j 
\left(
\sum_{i=1}^lD_xF_i(x^j_{k-1}) u_{i,k} 
\right)
\end{equation*}
is implicit in the variable $\lambda_{k-1}^j$,
 and it
requires the solution of a $n\times n$ linear system,
namely
\begin{equation} \label{eq:update_rule_bacward}
\lambda^j_{k-1} 
=
\lambda_{k}^j
\left( \mathrm{Id} -  h\sum_{i=1}^lD_xF_i(x^j_{k-1}) u_{i,k}\right)^{-1},
\end{equation}
as done in line 16.
We recall that 
$(\lambda^j_k)^{j=1,\ldots,M}_{k=0,\ldots,N}$ should 
be understood as row vectors.

\noindent  
In line 21 we used the trapezoidal rule to 
approximate
\begin{equation*}
N\int_{\frac{k-1}{N}}^{\frac{k}{N}}
 \sum_{j=1}^M\langle
\lambda_u^j(s),F_i(x_u^j(s))
\rangle \, ds \simeq
\frac{1}{2} \sum_{j=1}^M
\left(
\langle \lambda_{k-1}^j,F_i(x^j_{k-1}) \rangle+
\langle \lambda_k^j,F_i(x^j_k) \rangle
\right).
\end{equation*}
This choice seems natural since the function inside
the integral is available at the nodes
$\frac{k-1}{N}$ and $\frac{k}{N}$, for every 
$k=1,\ldots,N$. 
\end{remark}

\begin{remark}
It is interesting to compare the update of the
controls prescribed by
 Algorithm~\ref{alg:proj_grad_flow}
with the standard backpropagation of the gradients.
Given the parameters
$(u_{i,k})^{i=1,\ldots,l}_{ k=1,\ldots,N}$,
for every $k=1,\ldots,N$ let $\Phi_k:\R^n\to\R^n$
be the $k$-th building block obtained by using 
$(u_{i,k})^{i=1,\ldots,l}$ in \eqref{eq:fun_layer},
and let $\Phi=\Phi_N\circ\ldots\circ\Phi_1$
be the corresponding composition. 
For every $j=1,\ldots, M$, the quantity 
$C^j =\frac1M a(\Phi(x_0^j) - \Psi(x_0^j))
=\frac1M a(x_N^j-\Psi(x_0^j))$ represents the
the training error relative to the point $x_0^j$
in the dataset.
A direct computation shows that
\begin{align*}
\frac{\partial C^j}{\partial u_{\bar i, \bar k}}=&
\frac1M \nabla a(x_N^j - \Psi(x_0^j))
\cdot 
\left( \mathrm{Id} +  h\sum_{i=1}^lD_xF_i(x^j_{N-1}) u_{i,N}\right) \cdot \ldots \\
& \quad \ldots \cdot  
\left( \mathrm{Id} +  h\sum_{i=1}^lD_xF_i(x^j_{\bar k}) u_{i,\bar k+1}\right)\cdot F_{\bar i}(x_{\bar k -1}^j),
\end{align*}
where we used the notation $x^j_k = \Phi_k\circ
\ldots\circ\Phi_1(x_0^j)$. The last identity
can be rewritten as 
\begin{equation*}
\frac{\partial C^j}{\partial u_{\bar i, \bar k}}=
\langle \tilde \lambda^j_{\bar k }, F_{\bar i}(x_{\bar k-1}^j)
\rangle,
\end{equation*}
where $\tilde \lambda_{N}^j,\ldots, \tilde \lambda_{\bar k }^j$ are recursively defined
as follows
\begin{equation} \label{eq:lambda_backprop}
\begin{cases}
\tilde \lambda_{N}^j = \frac1M \nabla a (x_N^j -\Psi(x_0^j)),
\\
\tilde \lambda_{k-1}^j = \tilde \lambda_{k}^j 
\left( \mathrm{Id} +  h\sum_{i=1}^lD_x
F_i(x^j_{ k-1}) u_{i, k}\right) &
k=N,\ldots,\bar k +1.
\end{cases}
\end{equation}
Therefore, from \eqref{eq:lambda_backprop} we 
deduce that the backpropagation is related to
the discretization of \eqref{eq:ode_covect}
via the implicit Euler scheme:
\begin{equation*} 
\frac{\tilde\lambda^j_{k} -
\tilde \lambda^j_{k-1}}{h}
= -\tilde\lambda_{k}^j 
\left(
\sum_{i=1}^lD_xF_i(x^j_{k-1}) u_{i,k} 
\right).
\end{equation*}
However, since the Cauchy problem 
\eqref{eq:ode_covect} is solved backward in time, 
the previous expression is explicit in 
$\tilde \lambda^j_{k-1}$.
\end{remark}

\begin{remark}
In Algorithm~\ref{alg:proj_grad_flow}
all the {\it for} loops that involve the 
index variable of the elements of the ensemble
(i.e., the counter $j=1,\ldots,M$)
 can be carried out
in parallel. This is an important fact when dealing
with large dataset, since these loops are involved
in the time-consuming procedures that solve
\eqref{eq:ode_traj} and \eqref{eq:ode_covect}. 
However, in order to
update the controls (lines 20-23), 
we need to wait for the conclusion of
these computations and to
recollect the results.
\end{remark}

\begin{remark}
In Algorithm~\ref{alg:proj_grad_flow} at each 
iteration we use the entire dataset to update
the controls. However, it is possible to 
consider mini-batches, i.e., to sample at 
the beginning of each iteration a 
subset of the training points $\{ x_0^1,\ldots,
x_0^M \}$ of size $M'<M$. This practice is very
common in Deep Learning, where it is known as
\textit{stochastic gradient descent} (see, e.g., 
\cite[Section~5.9]{GBC16}).
We stress that, in case of stochastic mini-batches,
the cost used to accept/reject the update
should be computed with respect to the reduced 
dataset, and not to the full one. More precisely,
we should compare the quantity
\begin{equation*}
\mathrm{Cost}' = \frac1{M'} \sum_{j'=1}^{M'}
a(x^{j'}_N - \Psi(x_0^{j'})) + \frac\beta2
||u||_{L^2}^2
\end{equation*}
before and after the update of the controls.
\end{remark}

\end{section}

\begin{section}{Training the Network: Maximum 
Principle} \label{sec:train_PMP}
In this section we study the same problem as in
Section~\ref{sec:proj_grad}, i.e., we want to
define an implementable algorithm to learn
the parameters that define the functions
\eqref{eq:fun_layer}, which
play the role of inner layers in our ResNet.

As in the previous section,
we approach the training problem by
minimizing the functional $\F^M$.
However, in this case, the minimization 
procedure is based on the Pontryagin Maximum 
Principle. 
We state below the Maximum Principle
for our particular Optimal Control problem.
For a detailed and general presentation
of the topic the reader is referred to the
textbook \cite{AS}.

\begin{theorem}\label{thm:PMP}
Let $\tilde u\in \U$ be an admissible control that
minimizes the functional $\F^M$ defined in 
\eqref{eq:fun_M}. 
Let 
$\mathcal{H}:(\R^n)^M\times(\R^n)^M\times\R^l\to\R$
be the hamiltonian function defined as follows:
\begin{equation}\label{eq:ham_PMP}
\mathcal{H}(x,\lambda,u) =\sum_{j=1}^M
 \langle
\lambda^{j}, F(x^{j})u 
\rangle
- \frac\beta2|u|^2,
\end{equation}
where we set $x=(x^{1},\ldots,x^{M})$
and $\lambda=(\lambda^{1},\ldots,\lambda^{M})$.
Then there exists an absolutely continuous
function $\lambda_{\tilde u}:[0,1]\to(\R^n)^M$ such
that the following conditions hold:
\begin{itemize}
\item For every $j=1,\ldots,M$  the curve
$x_{\tilde u}^{j}:[0,1]\to\R^n$ satisfies
\begin{equation}\label{eq:ode_traj_PMP}
\begin{cases}
\dot x_{\tilde u}^{j}(s) =  \frac{\partial
}{\partial \lambda^{j}}\mathcal{H}(x_{\tilde u}(s),
\lambda_{\tilde u}(s),\tilde u(s)),\\
x_{\tilde u}^{j}(0) = x^{j}_0;
\end{cases}
\end{equation}
\item For every $j=1,\ldots,M$  the curve
$\lambda_{\tilde u}^{j}:[0,1]\to\R^n$ satisfies
\begin{equation}\label{eq:ode_covec_PMP}
\begin{cases}
\dot \lambda_{\tilde u}(s) = - \frac{\partial
}{\partial x^{j}}\mathcal{H}(x_{\tilde u}(s),
\lambda_{\tilde u}(s),\tilde u(s)),\\
\lambda_{\tilde u}^{j}(0) = - \frac1M
\nabla a(x_{\tilde u}^{j}(1)-\Psi(x^{j}_0));
\end{cases}
\end{equation}
\item For every $s\in[0,1]$, the following condition
is satisfied:
\begin{equation}\label{eq:max_cond_PMP}
\tilde u(s) \in \arg \max_{u\in \R^l}
\mathcal{H}(x_{\tilde u}(s), \lambda_{\tilde u}(s),u).
\end{equation}
\end{itemize}
\end{theorem}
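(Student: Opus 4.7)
The plan is to recognize this statement as a direct specialization of the classical Pontryagin Maximum Principle to a free-endpoint Mayer--Lagrange problem posed on the product state space $(\R^n)^M$. First I would reformulate the minimization of $\F^M$ as such a standard problem. Introduce the augmented state $x=(x^1,\ldots,x^M)\in (\R^n)^M$ governed by the $M$-fold dynamics $\dot x^j = F(x^j) u$ with initial condition $x^j(0)=x_0^j$, and rewrite
\[
\F^M(u) = \varphi(x(1)) + \int_0^1 \ell(u(s))\,ds, \qquad \varphi(x) := \frac1M \sum_{j=1}^M a(x^j - \Psi(x_0^j)),\quad \ell(u):=\tfrac{\beta}{2}|u|^2.
\]
With the pre-Hamiltonian $\langle \lambda, f(x,u)\rangle = \sum_j\langle \lambda^j, F(x^j)u\rangle$ and running cost $\ell$, the Hamiltonian in the ``maximization'' convention is exactly $\mathcal{H}$ as defined in \eqref{eq:ham_PMP}. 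The state equation \eqref{eq:ode_traj_PMP} is then $\dot x^j = \partial_{\lambda^j}\mathcal{H}$, tautologically.

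Next I would invoke the classical PMP for Mayer--Lagrange problems, e.g.\ \cite[Chapter~12]{AS}, which yields a non-trivial pair $(\nu_0,\lambda)$ with $\nu_0\geq 0$ such that the adjoint equation $\dot\lambda^j = -\partial_{x^j}\mathcal{H}$ holds, together with the transversality condition $\lambda^j(1) = -\nu_0\,\partial_{x^j}\varphi(x_{\tilde u}(1))$ and the pointwise maximization condition \eqref{eq:max_cond_PMP}. Because the terminal state is \emph{free} (no endpoint constraint), the normal case always prevails: if one had $\nu_0 = 0$, then $\lambda(1)=0$ and, since the adjoint ODE is linear in $\lambda$, this would force $\lambda\equiv 0$, contradicting non-triviality. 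Normalizing $\nu_0=1$, the transversality condition becomes precisely the terminal value of $\lambda^j$ appearing in \eqref{eq:ode_covec_PMP} (modulo a change of sign relative to the convention of Section~\ref{sec:proj_grad}, which is traced back to the opposite sign with which $\ell$ enters $\mathcal{H}$).

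The main subtlety to handle is that classical PMP statements are usually phrased for essentially bounded controls, whereas our admissible set is $\U=L^2$. Because $\mathcal{H}$ is strictly concave in $u$ (owing to $\beta>0$), the pointwise maximizer is uniquely attained and given in closed form by $\tilde u(s) = \tfrac{1}{\beta}\sum_j F^T(x^j_{\tilde u}(s))\lambda^j_{\tilde u}(s)$; boundedness of $F_1,\ldots,F_l$ (Assumption~\ref{ass:bound_fields}) together with Proposition~\ref{prop:bound_norm} and the linear adjoint equation yield $\lambda^j_{\tilde u}\in L^\infty$, hence $\tilde u\in L^\infty$ a posteriori, and the classical PMP applies without modification. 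Alternatively, and perhaps more economically, one can bypass the textbook reduction entirely: for a minimizer $\tilde u\in\U$ the Gateaux differential $d_u\F^M$ computed in \eqref{eq:diff_fun} must vanish, and identifying the adjoint $\lambda^j_{\tilde u}$ of Section~\ref{sec:proj_grad} with $-\lambda^j_{\tilde u}$ here (consistently with the sign conventions) rewrites $d_u\F^M(\tilde u)=0$ exactly as the first-order condition $\partial_u\mathcal{H}(x_{\tilde u},\lambda_{\tilde u},\tilde u)=0$, which by strict concavity is equivalent to \eqref{eq:max_cond_PMP}.
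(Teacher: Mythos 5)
Your proposal is correct, and its first half is essentially what the paper does: Theorem~\ref{thm:PMP} is stated without proof, as a direct specialization of the classical Pontryagin Maximum Principle for a Mayer--Lagrange problem on the product space $(\R^n)^M$ with free terminal state (the paper just refers to \cite{AS}), and the remark following the theorem records precisely your normality argument — if $\nu_0=0$ then $\lambda(1)=0$, and linearity of the adjoint equation forces $\lambda\equiv 0$, contradicting nontriviality. (Incidentally, you silently correct a slip in the statement: the condition on $\lambda^j_{\tilde u}$ in \eqref{eq:ode_covec_PMP} is a terminal condition at $s=1$, as your transversality argument and the paper's Algorithm~\ref{alg:max_princ} confirm.) What you add beyond the paper is the second, self-contained route: the vanishing of the Gateaux differential \eqref{eq:diff_fun} at a minimizer gives the stationarity $\beta\tilde u_i(s)+\sum_{j}\langle\lambda^j_{\tilde u}(s),F_i(x^j_{\tilde u}(s))\rangle=0$ a.e.\ (after the sign identification you indicate), and strict concavity of $\mathcal{H}$ in $u$ upgrades this to the maximization condition \eqref{eq:max_cond_PMP}; this works directly in $L^2$ and buys you a proof that does not rest on a textbook statement formulated for bounded controls. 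That alternative is also the cleaner way to handle the one delicate point in your first route: the ``a posteriori'' $L^\infty$ bootstrap for $\tilde u$ is mildly circular as phrased, since the closed-form expression $\tilde u=\frac1\beta\sum_j F^{T}(x^j_{\tilde u})\lambda^j_{\tilde u}$ already presupposes the maximization condition you are trying to establish; with the variational argument made primary (or used to obtain the regularity of $\tilde u$ first), the proof is complete.
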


\begin{remark}
In Theorem~\ref{thm:PMP} we stated the Pontryagin 
Maximum Principle for normal extremals only. This
is due to the fact that the optimal control
problem concerning the minimization of $\F^M$
does not admit abnormal extremals.
\end{remark}

An iterative method based on the Maximum Principle
for the numerical resolution of Optimal Control 
problems was proposed in \cite{CL}. 
The idea of training a deep neural network
with a numerical method
designed for optimal control problems
is already present in \cite{LCTE17} and
in \cite{BC19}, where the control system is of the
form \eqref{eq:ResNet_intro}.
In \cite{LCTE17} the authors
introduced a stabilization of the iterative method
described in \cite{CL}, while in \cite{BC19}  
the authors considered symplectic discretizations of
\eqref{eq:ode_traj_PMP}-\eqref{eq:ode_covec_PMP}.
Finally, in \cite{BCFH21} the authors used the
{\it particle method} to solve numerically the 
Mean Field Pontryagin equations, and employed a
root-finding procedure to maximize the
Hamiltonian.
The iterative method that we use in this section
was proposed in \cite{SS80}, and once again it is 
a stabilization of a method described in \cite{CL}.
The procedure consists in the following steps:
\begin{enumerate}
\item[(a)] Consider a control $u\in\U$
and compute $x_u:[0,1]\to(\R^n)^M$ using 
\eqref{eq:ode_traj_PMP};
\item[(b)] Assign $u_{\mathrm{old}}\gets u$ and
compute $\lambda_{u_\mathrm{old}}:[0,1]\to(\R^n)^M$ 
solving backward
\eqref{eq:ode_covec_PMP};
\item[(c)] Determine
$u_{\mathrm{new}}$ such that, for every $s\in[0,1]$,
\begin{equation} \label{eq:disc_max_PMP}
u_{\mathrm{new}}(s)\in \arg \max_{u\in\R^l}
\left( H(x_{u_{\mathrm{new}},}(s),\lambda_{u_\mathrm{old}}(s),
u) + \frac{1}{2\gamma}|u-u_{\mathrm{old}}(s)|^2 \right),
\end{equation}
where $x_{u_{\mathrm{new}}}:[0,1]\to(\R^n)^M$ solves
\eqref{eq:ode_traj_PMP} with respect to the control
$u_{\mathrm{new}}$.

\item[(d)] 
If $\F^M(u_\mathrm{new})<\F^M(u_\mathrm{old})$, then
update $u\gets u_\mathrm{new}$ and go to Step~(b).
If $\F^M(u_\mathrm{new})\geq \F^M(u_\mathrm{old})$,
then decrease the parameter $\gamma>0$, and go to
Step~(c).
\end{enumerate}
The algorithm as described above is not implementable,
since the maximization in Step~(c) requires the
availability of $x_{u_{\mathrm{new}}}$. 
However, in practice, 
the maximization of the extended Hamiltonian
in \eqref{eq:disc_max_PMP} and the computation of
the updated trajectory take place in subsequent steps.
Namely, after introducing the finite dimensional
subspace $\U_N\subset \U$ as in 
Section~\ref{sec:proj_grad}, 
for every $k=1,\ldots,N$ we plug into
\eqref{eq:disc_max_PMP} the approximations of
$\lambda_{u_{\mathrm{old}}}$ and 
$x_{u_{\mathrm{new}}}$ at the node $\frac{k-1}{N}$.
The value of the control $u$ in the interval 
$\left[ \frac{k-1}{N}, \frac{k}{N} \right)$
is set as the value where the maximum
of \eqref{eq:disc_max_PMP} is attained, and we use
it to compute $x_{u_{\mathrm{new}}}$ in the node
$\frac{k}{N}$. 
\begin{remark}
In \cite{SS80} the authors proved that 
the cost of the optimal control problem is decreasing
along the sequence of controls produced by the method,
provided that $\gamma$ is small enough.
Actually, this monotonicity result
is valid only for the ``ideal"
continuous-time procedure, and not for the 
discrete-time implementable algorithm.
A similar issue is observed for the iterative 
algorithm proposed in \cite{LCTE17}.
Anyway, the condition in Step~(d) is useful 
in the discrete-time case 
to adaptively adjust $\gamma$. 
\end{remark} 
  
We report in Algorithm~\ref{alg:max_princ}
the implementation of the procedure described above.
We use the same notations as in 
Section~\ref{sec:proj_grad}.

\begin{algorithm}
\KwData{ $(x_0^j)_{j=1,\ldots,M}\subset \R^n$ training 
dataset, $(F_i)_{i=1,\ldots,l}$  vector fields.\\ 
Set the parameters: $n_{\mathrm{layers}}\geq 1$, $\tau\in (0,1)$, $c \in (0,1)$, $
\gamma >0$, $\max_{\mathrm{iter}}\geq 1$.}
$N\gets n_{\mathrm{layers}}$\;
$h\gets\frac{1}{N}$\;
$u \in \U_N$\;
\For(\tcp*[f]{Forward solution of \eqref{eq:ode_traj_PMP}}){$j=1,\ldots,M$ }{
	\For{$k=1,\ldots,N$}{
		$x^j_{k}\gets x^j_{k-1} + h
			\sum_{i=1}^l F_i(x_{k-1}^j) u_{i,k}$}
	}

$\mathrm{Cost}\gets \frac1M \sum_{j=1}^M
a(x^j_N - \Psi(x^j_0)) + \frac\beta2 ||u||_{L^2}^2$\;
$\mathrm{flag}\gets 1$\;
\For(\tcp*[f]{Iterations of Projected Gradient Flow}){$r=1,\ldots,\max_{\mathrm{iter}}$ }{
	\If(\tcp*[f]{Solve \eqref{eq:ode_covec_PMP} only if necessary}){$\mathrm{flag}=1$ }{
		\For(\tcp*[f]{Backward solution
of \eqref{eq:ode_covec_PMP} }){$j=1,\ldots,M$ }{
			$\lambda^j_N\gets -\frac1M\nabla a(x^j_N 
- \Psi(x^j_0))$\;
			\For{$k=N,\ldots,1$}{
				$\lambda^j_{k-1} \gets 
(\mathrm{Id}- h \sum_{i=1}^l D_xF_i(x_{k-1}^j) 
u_{i,k})^{-T}\lambda_k^j$\;
				}			
			}
		}
		$x^{\mathrm{old}}\gets x$, 
		$u^{\mathrm{old}}\gets u$,
		$\lambda^{\mathrm{old}}\gets \lambda$
		\tcp*[r]{Set recovery point}
		\For(\tcp*[f]{Update of control and trajectories}){$k=1,\ldots,N$}{
	\For(\tcp*[f]{Correction of covectors}){$j=1,\ldots,M$}{	
	$\lambda^j_{k-1}\gets \lambda^j_{k-1}
	+ \frac1M \nabla a(x^{j,\mathrm{old}}_{k-1}-\Psi(x_0^j))
	- \frac1M \nabla a(x^{j}_{k-1}-\Psi(x_0^j))$\;}
	$u_k\gets \mathrm{argmax}_v\left(\mathcal{H}(x_{k-1},\lambda_{k-1},v) - \frac{1}{2\gamma}|v-u_k^{\mathrm{old}}|^2\right)$\;
	\For(\tcp*[f]{Trajectories in the
	next node}){$j=1,\ldots,M$}{
		$x^j_k \gets x^j_{k-1} + h\sum_{i=1}^lF_i(x_{k-1}^j)u_{i,k}$\;
		}
	}
	$\mathrm{Cost^{new}}
\gets \frac1M \sum_{j=1}^M
a(x^{j}_N - \Psi(x^j_0)) + \frac\beta2 ||u||_{L^2}^2$\;	
	\eIf(\tcp*[f]{Backtracking for $\gamma$}){$\mathrm{Cost}> \mathrm{Cost^{new}}$ }{
		$\mathrm{Cost}\gets \mathrm{Cost^{new}}$\;
		$\mathrm{flag} \gets 1$\;
		}
		{
		$x\gets x^{\mathrm{old}}$, 
		$u\gets u^{\mathrm{old}}$,
		$\lambda\gets \lambda^{\mathrm{old}}$
		\tcp*[r]{Recover saved quantities}
		$\gamma \gets \tau \gamma$\;
		$\mathrm{flag} \gets 0$\;
		}
	}
\caption{Training with Maximum Principle}
\label{alg:max_princ}
\end{algorithm}

\begin{remark}
In lines~22--24 of Algorithm~\ref{alg:max_princ}
we introduced a corrective term for the covectors.
This is due to the fact that
in \cite{SS80} where considered only problems without
end-point cost.\\
The maximization problem at line~25 can be solved 
with an explicit formula. Indeed, we have that
\[
u_{i,k} \gets \frac{1}{1+\gamma\beta}\left(
u_{i,k}^{\mathrm{old}} + \gamma\sum_{j=1}^M
\lambda_k^j \cdot F_i(x^j_k)
\right).
\]
We stress that the
 existence of such a simple expression is due to 
the fact that control system 
\eqref{eq:subr_sys_notat} is linear in the 
control variables.
\end{remark}
\begin{remark} 
We observe that the computation of the approximate
solutions of \eqref{eq:ode_covec_PMP} can be carried
out in parallel with respect to the index variable
of the elements of the ensembles (see lines~13--18 of
Algorithm~\ref{alg:max_princ}). 
However, when solving \eqref{eq:ode_traj_PMP}
in order to update the trajectories, the 
{\it for} loop on the elements of the ensemble
is nested into the {\it for} loop on the 
discretization nodes (see lines~21--29 of
Algorithm~\ref{alg:max_princ}).
\end{remark}

\end{section}

\begin{section}{Numerical Experiments}
\label{sec:num_exp}
In this section we describe the numerical experiments 
involving the approximation of an unknown flow by
means of Algorithm~\ref{alg:proj_grad_flow}
and Algorithm~\ref{alg:max_princ}.
{We implemented the codes in Matlab
and we ran them on  a laptop with
16 GB of RAM and a 6-core 2.20 GHz processor. 
Since we consider $\R^2$ as ambient space,
Theorem~\ref{thm:univ_app_flows} and
Theorem~\ref{thm:sys_contr} guarantee that the
linear-control system associated to the fields 
\[
F_1(x) := \frac{\partial}{\partial x_1}, \quad 
F_2(x) := \frac{\partial}{\partial x_2},
\]
\[
F_1'(x) := e^{-\frac{1}{2\nu}|x|^2}\frac{\partial}{\partial x_1}, \quad 
F_2'(x) := e^{-\frac{1}{2\nu}|x|^2}\frac{\partial}{\partial x_2}, 
\]
is capable of approximating on compact sets
diffeomorphisms that are diffeotopic to the 
identity. 
However, it looks natural to include
in the set of the controlled vector fields also 
the following ones:
\[
G_1^1(x) := x_1\frac{\partial}{\partial x_1}, \quad
G_1^2(x) := x_2\frac{\partial}{\partial x_1}, 
\]
\[
G_2^1(x) := x_1\frac{\partial}{\partial x_2}, \quad
G_2^2(x) := x_2\frac{\partial}{\partial x_2}.
\]
Indeed, with this choice, we observe that the 
corresponding control system
\begin{equation} \label{eq:ctrl_sys_1_exper}
\dot x = 
\left(
\begin{matrix}
u_1\\
u_2
\end{matrix}
\right)
+ e^{-\frac{1}{2\nu}|x|^2}
\left(
\begin{matrix}
u_1'\\
u_2'
\end{matrix}
\right)
+ \left(
\begin{matrix}
u_1^1 & u^2_1\\
u_2^1 & u_2^2
\end{matrix}
\right)
\left(
\begin{matrix}
x_1\\
x_2
\end{matrix}
\right)
\end{equation}
is capable of reproducing \textit{exactly} 
non-autonomous vector fields that are linear
in the state variables $(x_1,x_2)$.
Moreover,
the discretization of the control system
\eqref{eq:ctrl_sys_1_exper} on the evolution interval
$[0,1]$ with step-size $h=\frac1N$ gives rise to
a ResNet $\Phi = \Phi_N\circ \ldots\circ\Phi_1$
with $N$ layers, whose building blocks
have the form:
\begin{equation} \label{eq:block_exp_1}
\Phi_k(x) = x + h 
\left[
\left(
\begin{matrix}
u_1\\
u_2
\end{matrix}
\right)
+ e^{-\frac{1}{2\nu}|x|^2}
\left(
\begin{matrix}
u_1'\\
u_2'
\end{matrix}
\right)
+ \left(
\begin{matrix}
u_1^1 & u^2_1\\
u_2^1 & u_2^2
\end{matrix}
\right)
\left(
\begin{matrix}
x_1\\
x_2
\end{matrix}
\right)
\right],
\end{equation}
and each of them has $8$ parameters. 
}

\subsection{Approximation of a diffeomorphism}
We consider the following diffeomorphism 
$\tilde \Psi: \R^2 \to\R^2$:
\[
\tilde \Psi(x) := x + 
\left( \begin{matrix}
2x_1e^{x_1^2-1} \\
2x_2^3
\end{matrix} \right) 
+
\left( \begin{matrix}
-4 \\
-4.5
\end{matrix} \right),
\] 
the rotation $R:\R^2\to\R^2$ centered at the
origin and with angle $\pi/3$, and the
translation $T:\R^2\to\R^2$ prescribed by the vector
$(0.3,0.2)$. Finally, we set
\begin{equation} \label{eq:Psi_exp}
\Psi := \tilde \Psi\circ T \circ R.
\end{equation}
We generate the training dataset
$\{ x^1,\ldots,x^M \}$ with $M=900$ points
by constructing a uniform grid in the 
square centered at the origin and with
side of length $\ell=1.5$.
In Figure~\ref{fig:traing_data_transf}
we report the training dataset and its image
trough $\Psi$.
\begin{center}
\begin{figure}
\includegraphics[scale=0.49]{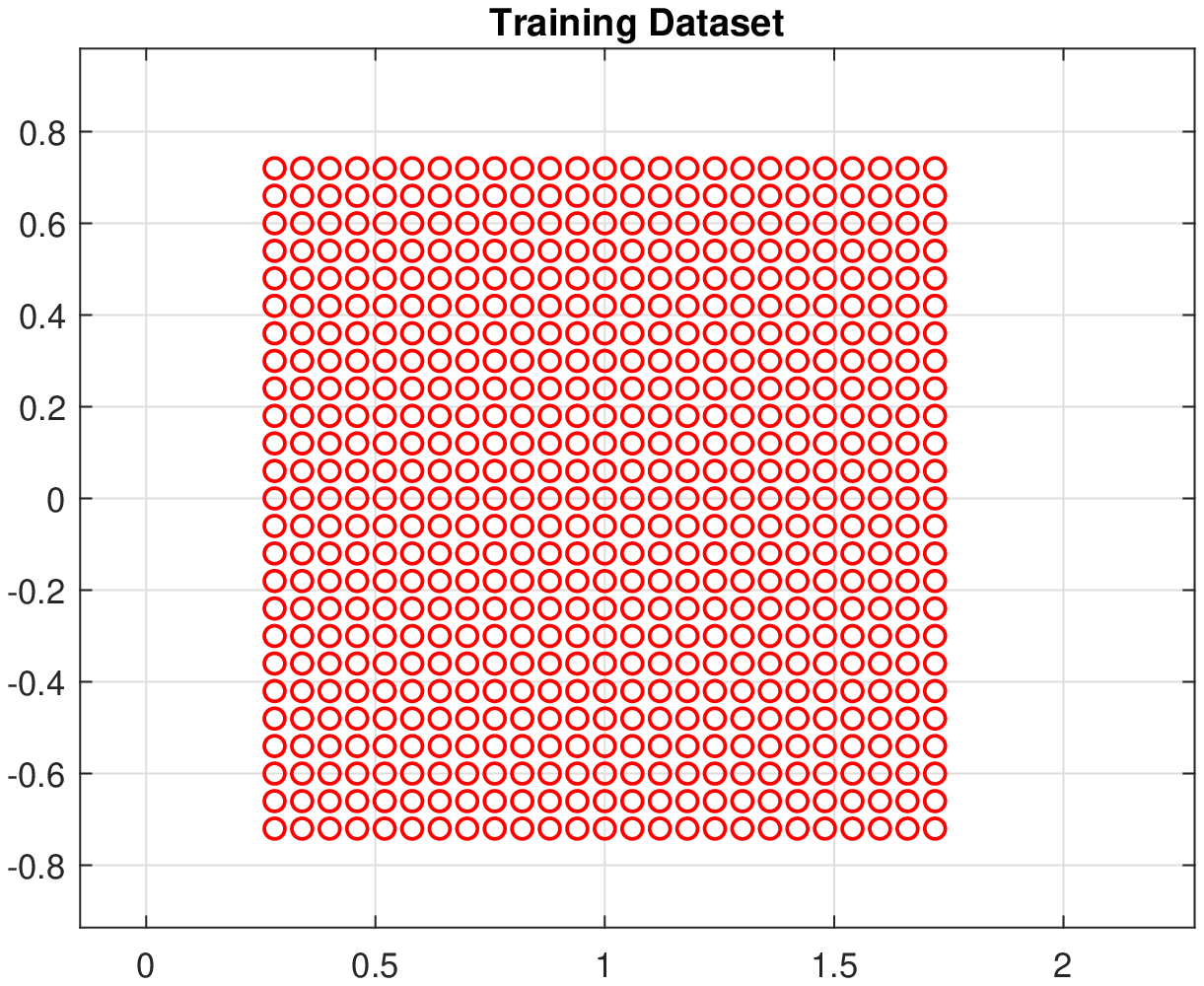}
\includegraphics[scale=0.49]{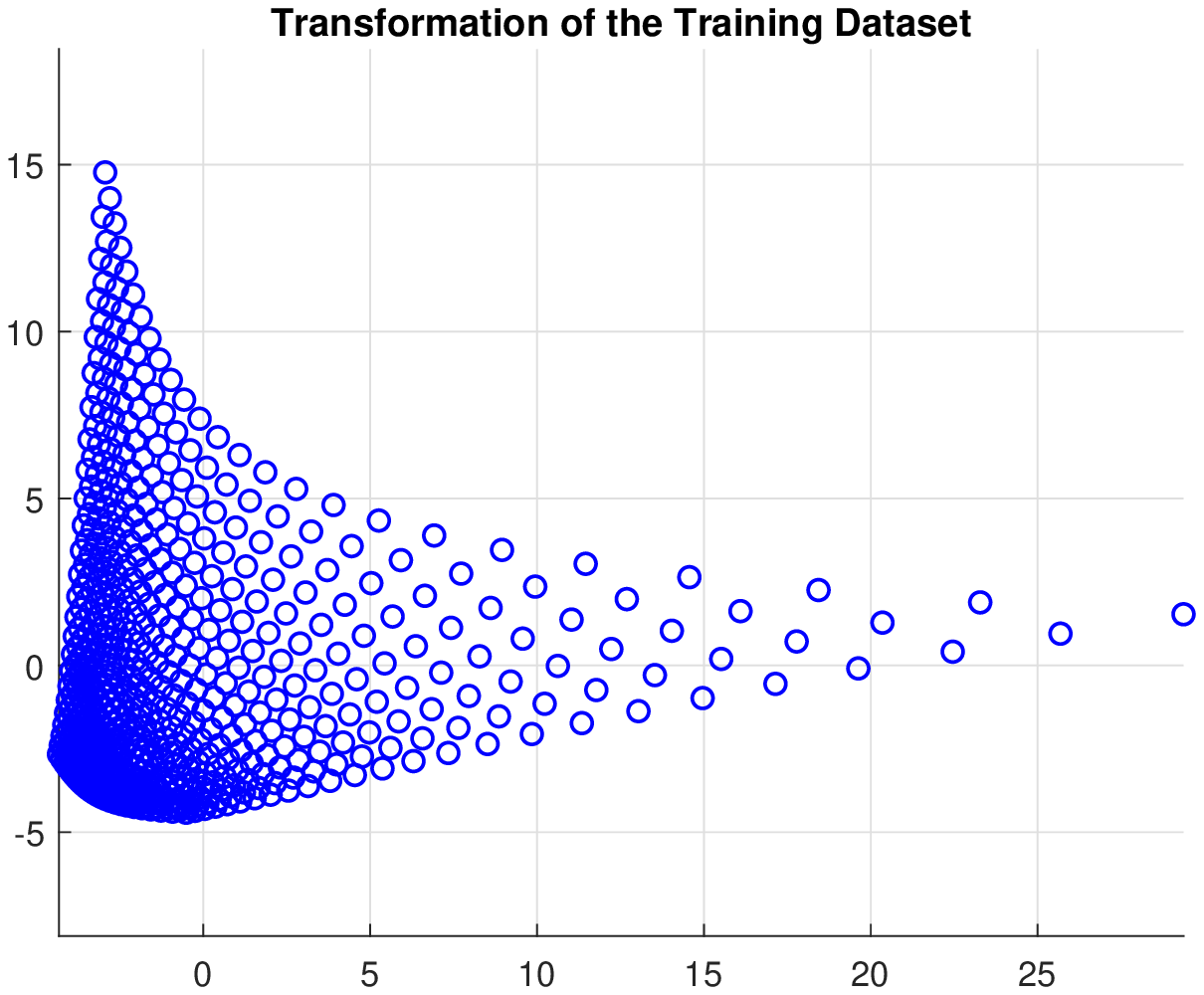}
\caption{On the left we report the
grid of points $\{ x^1,\ldots,x^M \}$
where we have evaluated the 
diffeomorphism $\Psi:\R^2\to\R^2$ 
defined as in \eqref{eq:Psi_exp}. The picture on the
right represents the transformation of the training
dataset through the diffeomorphism $\Psi$. 
} \label{fig:traing_data_transf}
\end{figure}
\end{center}
If we denote by $\mu$ the probability measure that
charges uniformly the square and if we set
$\mu_M:= \frac1M \sum_{j=1}^M\delta_{x^j}$, we 
obtain the following estimate 
\begin{equation*}
W_1(\mu_M,\mu) \leq \frac{\sqrt2 \ell}{2\sqrt{M}},
\end{equation*}
that can be used to compute the \textit{a priori}
estimate of the generalization error 
provided by \eqref{eq:est_gen_err_univers}.
We use the $1-$Lipschitz loss function 
$$a(x-y):=\sqrt{1+(x_1-y_1)^2+(x_2-y_2)^2}-1,$$
and we look for a minimizer of 
\begin{equation} \label{eq:obj_exp}
\F^M(u) := \frac{1}{900}\sum_{j=1}^{900}
a(\Phi_u(x^j)-\Psi(x^j))
+ \frac{\beta}{2}||u||^2_{L^2},
\end{equation}
where $\beta >0$ is the 
regularization hyper-parameter. 
In the training phase we use the same dataset
for Algorithm~\ref{alg:proj_grad_flow} and
Algorithm~\ref{alg:max_princ}, and in both cases
the initial guess of the control is $u\equiv 0$.
Finally, the testing dataset has been generated by 
randomly sampling $300$ points using $\mu$,
the uniform probability measure on the square.
The value of the hyper-parameter
$\nu$ is set equal to $20$.
We first tried to approximate the diffeomorphism
$\Psi$ using $h=2^{-4}$, resulting in $16$ inner
layers. 
Hence, recalling that each building-block
 \eqref{eq:block_exp_1} has
$8$ parameters,
the corresponding ResNet has 
in total $128$ parameters.
 We tested different values of $\beta$, and 
we set $\mathrm{max}_{iter}=500$.
The results obtained by 
Algorithm~\ref{alg:proj_grad_flow} and
Algorithm~\ref{alg:max_princ} are reported in
Table~1
and Table~2, respectively.
We observe that in both algorithms  the
Lipschitz constant of the produced diffeomorpism
grows as the 
hyper-parameter $\beta$ gets smaller,
 consistently with the theoretical intuition.
As regards the testing error, we observe that 
it always remains reasonably close to the 
corresponding training error. 

\begin{table}
\begin{tabular}{|r|r|r|r|}
\hline
$\beta$ & $L_{\Phi_u}$ & Training error & Testing error \\
\hline
$10^0$ & $1.19$ & $3.8785$ & $3.8173$\\
\hline
$10^{-1}$ & $8.40$ & $1.3143$ & $1.2476$\\
\hline
$10^{-2}$ & $9.32$ & $1.1991$ & $1.1451$\\
\hline
$10^{-3}$ & $9.37$ & $1.1852$ & $1.1330$\\
\hline
$10^{-4}$ & $9.37$ & $1.1839$ & $1.1318$\\
\hline
\end{tabular}
\vspace{0.3cm}
\caption{ResNet~\ref{eq:block_exp_1}, $16$ layers,
$128$ parameters,
Algorithm~\ref{alg:proj_grad_flow}.
Running time $\sim 160$ s.
}
\end{table}
\begin{table}
\begin{tabular}{|r|r|r|r|}
\hline
$\beta$ & $L_{\Phi_u}$ & Training error & Testing error \\
\hline
$10^0$ & $1.19$ & $3.8749$ & $3.8157$\\
\hline
$10^{-1}$ & $8.40$ & $1.3084$ & $1.2455$\\
\hline
$10^{-2}$ & $9.32$ & $1.2014$ & $1.1486$\\
\hline
$10^{-3}$ & $9.33$ & $1.1898$ & $1.1387$\\
\hline
$10^{-4}$ & $9.33$ & $1.1898$ & $1.1379$\\
\hline
\end{tabular}
\vspace{0.3cm}
\caption{ResNet~\ref{eq:block_exp_1}, $16$ layers,
$128$ parameters, Algorithm~\ref{alg:max_princ}.
Running time $\sim 130$ s.
}
\end{table}

\noindent
We report in Figure~\ref{fig:Results_best_1}
the image of the 
approximation that achieves the best training
and testing errors, namely 
Algorithm~\ref{alg:proj_grad_flow} 
with $\beta=10^{-4}$.
As we may observe, the prediction is quite 
unsatisfactory, both on the training and on the
testing data-sets. 
Finally, we report that the formula 
\eqref{eq:est_gen_err_univers} correctly
provides an upper bound to the testing error,
even though it is too pessimistic to be of practical
use.

\begin{center}
\begin{figure}
\includegraphics[scale=0.49]{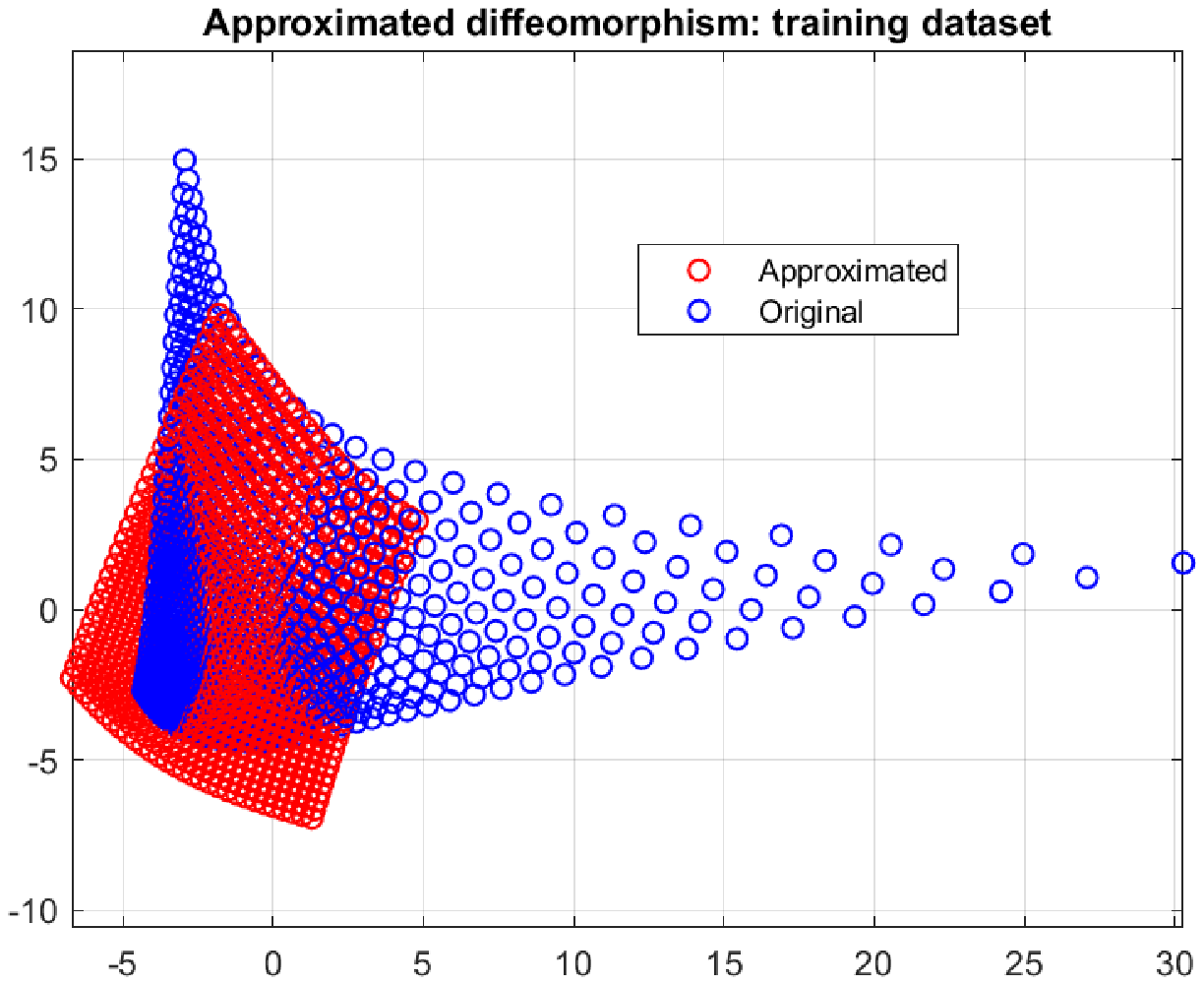}
\includegraphics[scale=0.49]{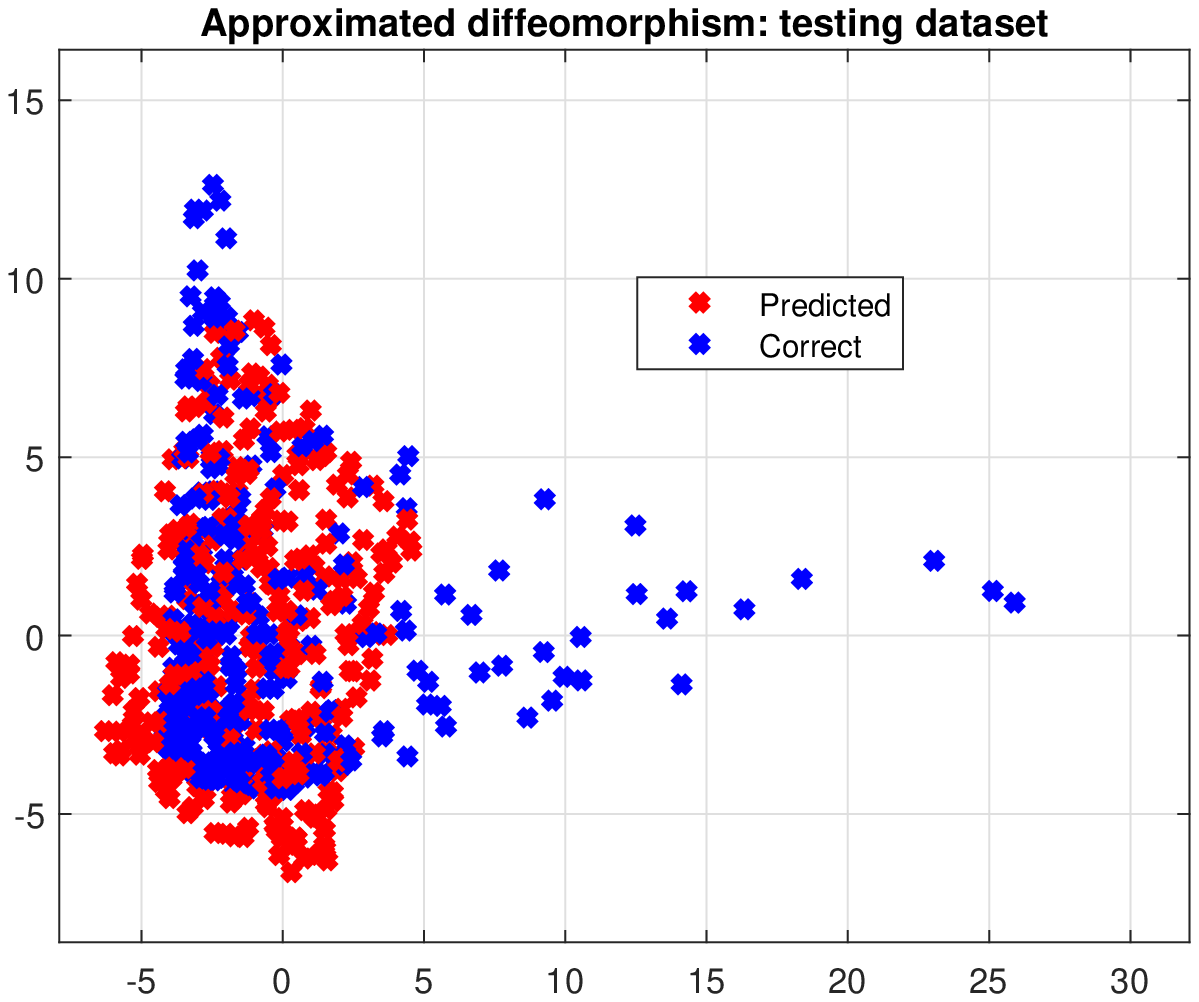}\\
\includegraphics[scale=0.49]{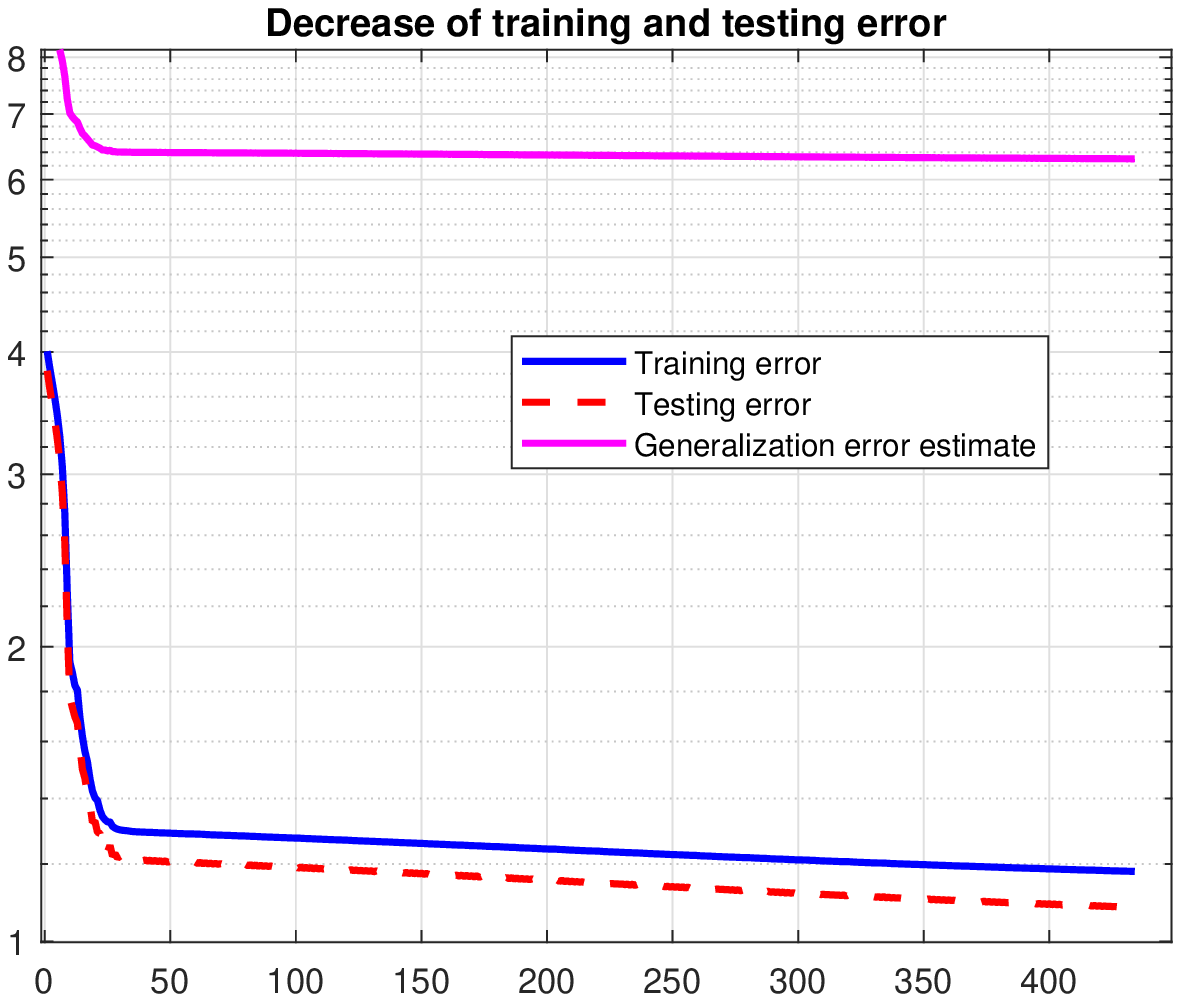}
\caption{ResNet~\ref{eq:block_exp_1}, 16 layers,
Algorithm~\ref{alg:proj_grad_flow}, $\beta=10^{-4}$.
On the top-left we reported the 
transformation of the initial grid through the 
approximating diffeomorphism
(red circles) and through 
the original one (blue circles).
On the top-right, we plotted the prediction 
on the testing data-set provided by the 
approximating diffeomorphism (red crosses) and the
correct values obtained through the original
transformation (blue crosses). In both cases, the
approximation obtained is unsatisfactory.
At bottom we plotted the decrease of the 
training error and the testing error versus
the number of iterations. Finally, the curve in 
magenta represents the estimate of the generalization
error provided by \eqref{eq:est_gen_err_univers}.
} \label{fig:Results_best_1}
\end{figure}
\end{center}

In order to improve the quality of the approximation,
a natural attempt consists in trying to increase the
depth of the ResNet. Therefore, we repeated the
experiments setting $h=2^{-5}$, that corresponds
to $32$ layers. Recalling that the ResNet in exam
has $8$ parameters per layer, the architecture has
globally $256$ weights.
The results are reported in Table~3 and Table~4.
Unfortunately, despite doubling the depth of the 
ResNet, we do not observe any relevant improvement
in the training nor in the testing error.
Using the idea explained in 
Remark~\ref{rmk:sys_fields}, instead of 
further increase the number of the layers, 
we try to enlarge the family of the
controlled vector fields in the control system
associated to the ResNet.

\begin{table}
\begin{tabular}{|r|r|r|r|}
\hline
$\beta$ & $L_{\Phi_u}$ & Training error & Testing error \\
\hline
$10^0$ & $1.19$ & $3.8779$ & $3.8168$\\
\hline
$10^{-1}$ & $8.40$ & $1.3074$ & $1.2425$\\
\hline
$10^{-2}$ & $9.26$ & $1.2015$ & $1.1477$\\
\hline
$10^{-3}$ & $9.34$ & $1.1860$ & $1.1352$\\
\hline
$10^{-4}$ & $9.34$ & $1.1842$ & $1.1332$\\
\hline
\end{tabular}
\vspace{0.3cm}
\caption{ResNet~\ref{eq:block_exp_1}, $32$ layers,
$256$ parameters,
Algorithm~\ref{alg:proj_grad_flow}.
Running time $\sim 320$ s.}
\end{table}
\begin{table}
\begin{tabular}{|r|r|r|r|}
\hline
$\beta$ & $L_{\Phi_u}$ & Training error & Testing error \\
\hline
$10^0$ & $1.19$ & $3.8739$ & $3.8148$\\
\hline
$10^{-1}$ & $8.35$ & $1.3085$ & $1.2449$\\
\hline
$10^{-2}$ & $9.23$ & $1.2075$ & $1.1538$\\
\hline
$10^{-3}$ & $9.26$ & $1.1931$ & $1.1416$\\
\hline
$10^{-4}$ & $9.26$ & $1.1918$ & $1.1404$\\
\hline
\end{tabular}
\vspace{0.3cm}
\caption{ResNet~\ref{eq:block_exp_1}, $32$ layers,
$256$ parameters, Algorithm~\ref{alg:max_princ}.
Running time $\sim 260$ s.
}
\end{table} 

\subsection{Enlarged family of controlled fields}
Using the ideas expressed in 
Remark~\ref{rmk:sys_fields}, we enrich the 
family of the controlled fields.
In particular, in addition to the fields considered
above, we include the following ones:
\[
G^{1,1}_1:= x_1^2e^{-\frac{1}{2\nu}|x|^2}\frac{\partial}{\partial x_1}, \quad
G^{1,2}_1:= x_1x_2e^{-\frac{1}{2\nu}|x|^2}\frac{\partial}{\partial x_1}, \quad
G^{2,2}_1:=x_2^2e^{-\frac{1}{2\nu}|x|^2}\frac{\partial}{\partial x_1}, 
\]
\[
G^{1,1}_2:= x_1^2e^{-\frac{1}{2\nu}|x|^2}\frac{\partial}{\partial x_2}, \quad
G^{1,2}_2:= x_1x_2e^{-\frac{1}{2\nu}|x|^2}\frac{\partial}{\partial x_2}, \quad
G^{2,2}_2:=x_2^2e^{-\frac{1}{2\nu}|x|^2}\frac{\partial}{\partial x_2}.
\]
Therefore, the resulting linear-control system
on the time interval $[0,1]$ has the form
\begin{align*} 
\dot x = 
\left(
\begin{matrix}
u_1\\
u_2
\end{matrix}
\right)&
+ e^{-\frac{1}{2\nu}|x|^2}
\left(
\begin{matrix}
u_1'\\
u_2'
\end{matrix}
\right)
+ \left(
\begin{matrix}
u_1^1 & u^2_1\\
u_2^1 & u_2^2
\end{matrix}
\right)
\left(
\begin{matrix}
x_1\\
x_2
\end{matrix}
\right)
\\ & \qquad+
e^{-\frac{1}{2\nu}|x|^2}
\left(
\begin{matrix}
u_1^{1,1}x_1^2 + u_1^{1,2}x_1x_2 + u_1^{2,2}x_2^2\\
u_2^{1,1}x_1^2 + u_2^{1,2}x_1x_2 + u_2^{2,2}x_2^2
\end{matrix}
\right),
\end{align*}
while the building blocks of the corresponding ResNet
have the following expression:
\begin{align}
\Phi_k(x) = x + h & \label{eq:block_exp_2_1}
\left[
\left(
\begin{matrix}
u_1\\
u_2
\end{matrix}
\right)
+ e^{-\frac{1}{2\nu}|x|^2}
\left(
\begin{matrix}
u_1'\\
u_2'
\end{matrix}
\right)
+ \left(
\begin{matrix}
u_1^1 & u^2_1\\
u_2^1 & u_2^2
\end{matrix}
\right)
\left(
\begin{matrix}
x_1\\
x_2
\end{matrix}
\right)
\right.\\
& \left. \qquad+ \label{eq:block_exp_2_2}
e^{-\frac{1}{2\nu}|x|^2}
\left(
\begin{matrix}
u_1^{1,1}x_1^2 + u_1^{1,2}x_1x_2 + u_1^{2,2}x_2^2\\
u_2^{1,1}x_1^2 + u_2^{1,2}x_1x_2 + u_2^{2,2}x_2^2
\end{matrix}
\right) \right]
\end{align}
for $k=1,\ldots,N$,
where $h=\frac1N$ is the discretization step-size
and $N$ is the number of layers of the ResNet.
We observe that each building block has
$14$ parameters.

As before, we set $\nu = 20$,
$\mathrm{max}_{iter}=500$ and we  considered 
$h=2^{-4}$, resulting in a ResNet with $16$ layers
and with total number of weights equal to
$224$.
We used the same training data-set as above, 
namely the grid of points and the corresponding
image trough $\Psi$ depicted in 
Figure~\ref{fig:traing_data_transf}.
We trained the network using both
Algorithm~\ref{alg:proj_grad_flow} 
and Algorithm~\ref{alg:max_princ}.
The results are collected in Table~5 and Table~6,
respectively. Once again, we observe that 
the Lipschitz constant of the approximating 
diffeomorphisms grows as $\beta$ is reduced.
In this case, with both algorithms, 
the training and testing errors are much lower if
compared with the best case of the ResNet~\ref{eq:block_exp_1}. We insist on the fact that
in the present case the ResNet~\ref{eq:block_exp_2_1}-\ref{eq:block_exp_2_2}
has in total $224$ parameters divided into $16$
layers, and it overperforms the 
ResNet~\ref{eq:block_exp_1} with $256$ parameters
divided into $32$ layers.
We report in Figure~\ref{fig:Results_best_2} the approximation produced 
by Algorithm~\ref{alg:proj_grad_flow} with
$\beta = 10^{-3}$.
In this case the approximation provided is 
very satisfactory, and we observe that it is better
in the area where more observations are available.
Finally, also in this case the estimate on
the expected generalization error
\eqref{eq:est_gen_err_univers} provides
an upper bound for the testing error, but 
at the current state it is too coarse to
be of practical use.

\begin{table}
\begin{tabular}{|r|r|r|r|}
\hline
$\beta$ & $L_{\Phi_u}$ & Training error & Testing error \\
\hline
$10^0$ & $10.14$ & $2.3791$ & $2.3036$\\
\hline
$10^{-1}$ & $13.84$ & $0.1809$ & $0.2314$\\
\hline
$10^{-2}$ & $15.64$ & $0.1290$ & $0.1784$\\
\hline
$10^{-3}$ & $15.83$ & $0.1254$ & $0.1747$\\
\hline
$10^{-4}$ & $15.86$ & $0.1257$ & $0.1751$\\
\hline
\end{tabular}
\vspace{0.3cm}
\caption{ResNet~\ref{eq:block_exp_2_1}-\ref{eq:block_exp_2_2}, $16$ layers,
$224$ parameters,
Algorithm~\ref{alg:proj_grad_flow}.
Running time $\sim 320$ s.
}
\end{table}
\begin{table}
\begin{tabular}{|r|r|r|r|}
\hline
$\beta$ & $L_{\Phi_u}$ & Training error & Testing error \\
\hline
$10^0$ & $10.78$ & $2.3638$ & $2.3910$\\
\hline
$10^{-1}$ & $14.32$ & $0.1921$ & $0.2422$\\
\hline
$10^{-2}$ & $15.43$ & $0.1887$ & $0.2347$\\
\hline
$10^{-3}$ & $15.56$ & $0.2260$ & $0.2719$\\
\hline
$10^{-4}$ & $15.59$ & $0.2127$ & $0.2564$\\
\hline
\end{tabular}
\vspace{0.3cm}
\caption{ResNet~\ref{eq:block_exp_2_1}-\ref{eq:block_exp_2_2}, $16$ layers,
$224$ parameters, Algorithm~\ref{alg:max_princ}.
Running time $\sim 310$ s.}
\end{table} 

\begin{center}
\begin{figure}
\includegraphics[scale=0.49]{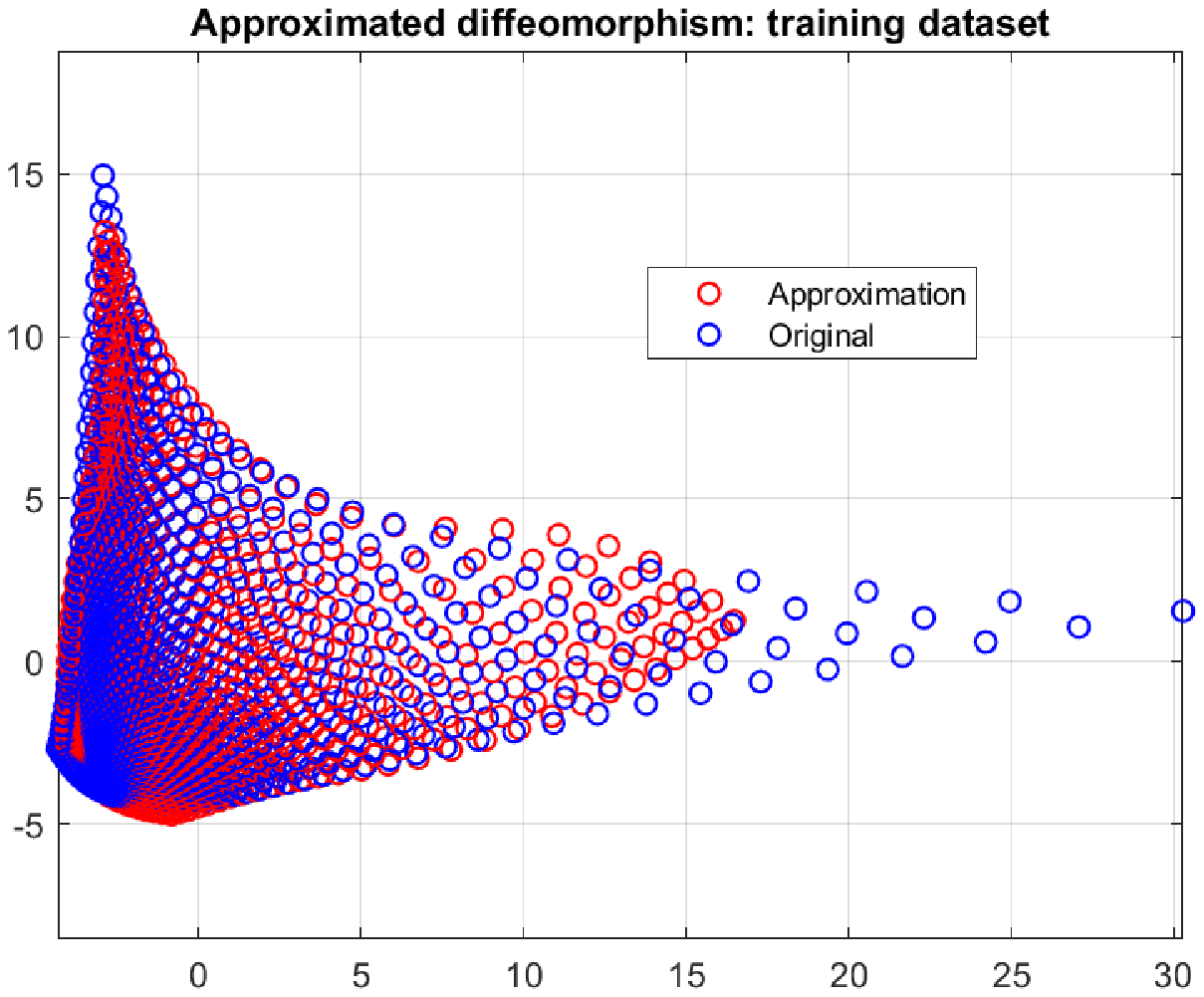}
\includegraphics[scale=0.49]{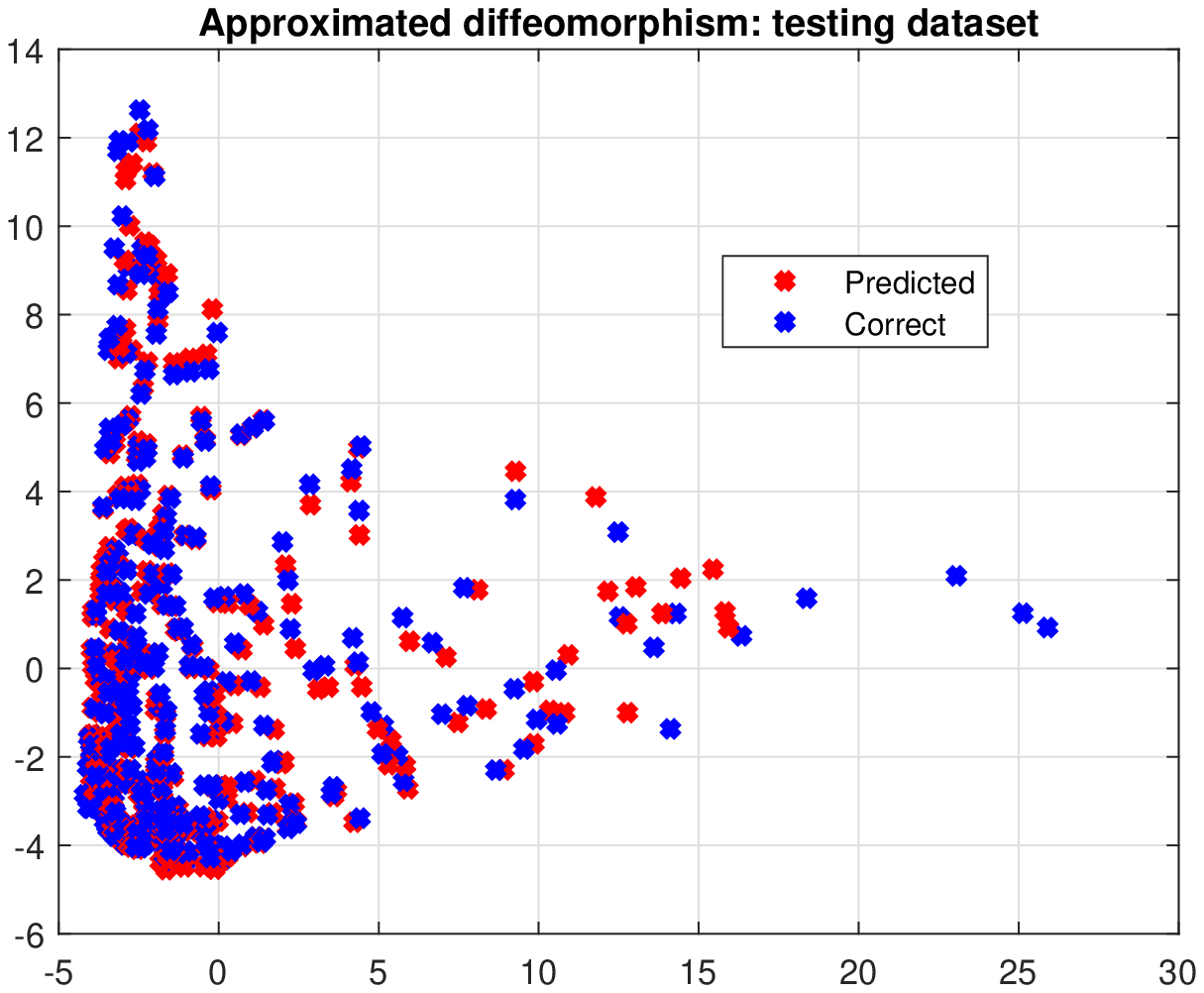}\\
\includegraphics[scale=0.49]{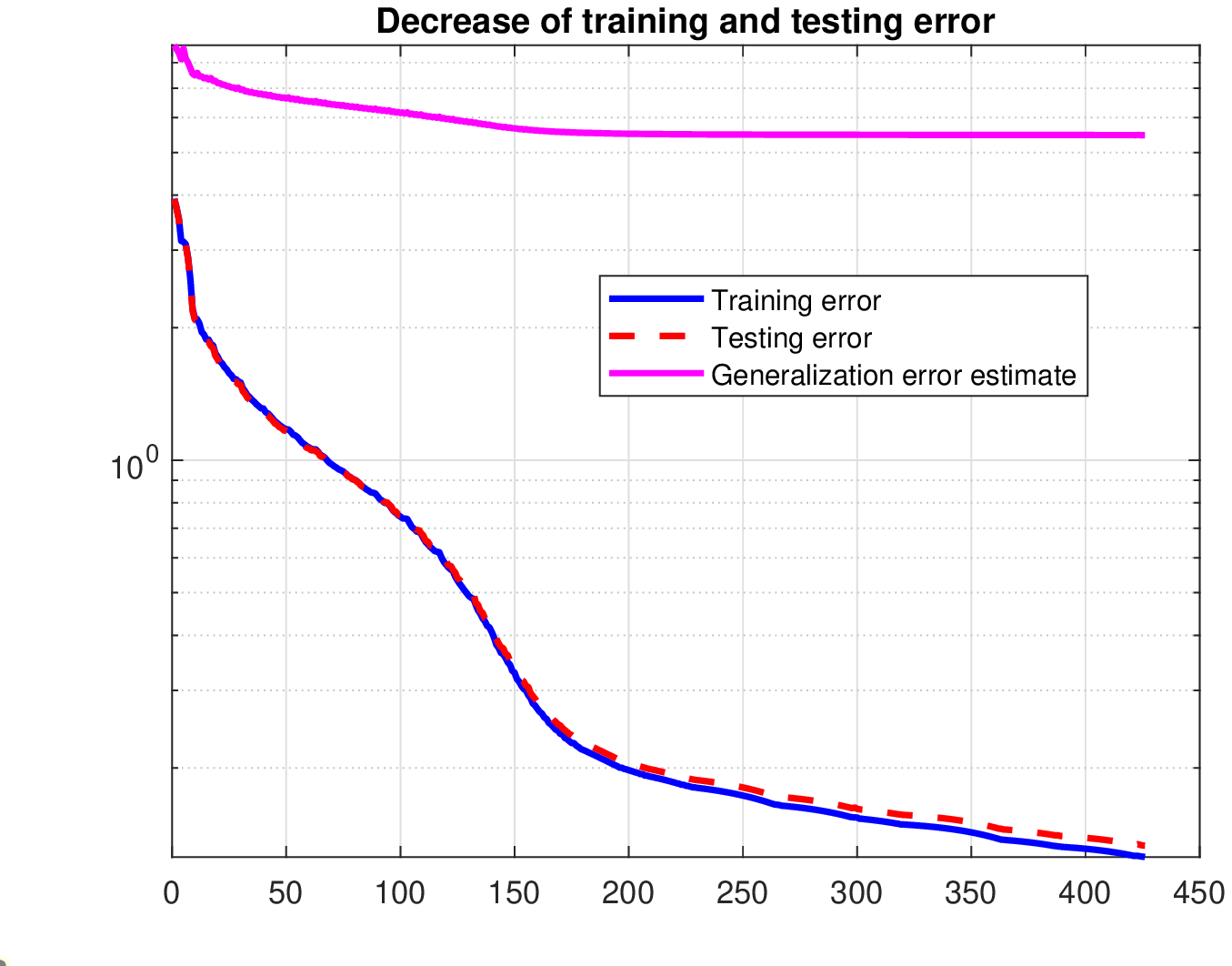}
\caption{ResNet~\ref{eq:block_exp_2_1}-\ref{eq:block_exp_2_2}, 16 layers,
Algorithm~\ref{alg:proj_grad_flow}, $\beta=10^{-3}$.
On the top-left we reported the 
transformation of the initial grid through the 
approximating diffeomorphism
(red circles) and through 
the original one (blue circles).
On the top-right, we plotted the prediction 
on the testing data-set provided by the 
approximating diffeomorphism (red crosses) and the
correct values obtained through the original
transformation (blue crosses). In both cases, the
approximation obtained is good, and we observe that
it is better where we have more data density.
At bottom we plotted the decrease of the 
training error and the testing error versus
the number of iterations. Finally, the curve in 
magenta represents the estimate of the generalization
error provided by \eqref{eq:est_gen_err_univers}.
} \label{fig:Results_best_2}
\end{figure}
\end{center}

\end{section}

\section*{Conclusions}

In this paper we have derived a
Deep Learning architecture 
for a Residual Neural Network 
starting from a control system.
Even though this approach has already been 
undertaken in some recent works (see
\cite{E17}, \cite{HR17}, \cite{LCTE17},
\cite{BC19}, \cite{BCFH21}),
the original aspect of our contribution lies in
the fact that the control system considered here
is linear in the control variables.
In spite of this apparent simplicity, the flows
generated by these control systems (under a suitable
assumption on the controlled vector fields)
can approximate with arbitrary precision
on compact sets any diffeomorphism diffeotopic
to the identity (see \cite{AS1}, \cite{AS2}).
Moreover, the linearity in the control variables
simplifies the implementation and 
the computational effort required in the training
procedure. Indeed, when the parameters are learned
via a Maximum-Principle approach, the maximization
of the Hamiltonian function is typically highly
time-consuming if the control system is
non-linear in the control variables.
In the paper we propose two training procedure.
The first one consists in projecting onto a 
finite-dimensional space the gradient flow 
associated to a proper Optimal Control problem.
The second procedure relies on
an iterative method for the solution of Optimal 
Control problems 
based on Pontryagin Maximum Principle.
We have tested the learning algorithms on 
a diffeomorphism approximation problem in 
$\R^2$, obtaining interesting results.
We plan to investigate in future work
the performances of this kind of Neural Networks 
in other learning tasks, such as classification.

\subsection*{Acknowledgments}
The Author acknowledges partial support from
INDAM-GNAMPA. 
The Author wants to thank Prof. A. Agrachev and 
Prof. A. Sarychev for encouraging and for the helpful 
discussions.
{The Author is grateful to an anonymous
referee for the invaluable suggestions that
contributed to improve the quality of the paper.}


\vspace{1cm}

\noindent
(A. Scagliotti) Scuola Internazionale Superiore di
Studi Avanzati, Trieste, Italy.\\
{\it Email address: }{\texttt{ascaglio@sissa.it}}

\end{document}